\documentclass[a4paper,11pt]{article}
\usepackage[english]{babel}

\usepackage{natbib}

\usepackage{anysize}
\marginsize{3cm}{3cm}{2.5cm}{2.5cm}
\usepackage{fancyhdr}
\usepackage{amsfonts}
\usepackage{amsmath}
\usepackage{mathtools}
\usepackage{mathrsfs}
\usepackage{tikz,graphicx,subfigure,overpic}
\usepackage{color}
\usepackage{amssymb}
\usepackage{amsthm}
\usepackage[colorlinks=true, linkcolor=blue, citecolor=blue, urlcolor=blue]{hyperref}
\usepackage[fixlanguage]{babelbib}
\usepackage{color}
\usepackage{comment}
\usepackage{subfigure}
\usepackage{authblk}
\usepackage{pifont}
\numberwithin{equation}{section}

\newtheorem{theorem}{Theorem}[]
\newtheorem{lemma}[]{Lemma}
\newtheorem{corollary}[]{Corollary}
\newtheorem{proposition}[]{Proposition}
\newtheorem{assumption}{Assumption}[]
\theoremstyle{definition}
\newtheorem{definition}[]{Definition}
\newtheorem{Remark}[]{Remark}
\newenvironment{remark}{\begin{Remark}\rm}{\end{Remark}}

\newtheorem{Example}[]{Example}
\newcommand{\rev}[1]{#1}
\newcommand{\eq}{\begin{equation}}
\newcommand{\qe}{\end{equation}}
\newcommand{\beq}{\begin{equation}}
\newcommand{\eeq}{\end{equation}}
\newcommand{\beqa}{\begin{eqnarray}}
\newcommand{\eeqa}{\end{eqnarray}}
\newcommand{\beqan}{\begin{eqnarray*}}
\newcommand{\eeqan}{\end{eqnarray*}}

\newcommand{\N}{\mathbb{N}}
\newcommand{\Z}{\mathbb{Z}}
\newcommand{\R}{\mathbb{R}}
\newcommand{\C}{\mathbb{C}}
\newcommand{\E}{\mathbb{E}}
\renewcommand{\P}{\mathbb{P}}

\newcommand{\cO}{\mathcal{O}}

\newcommand{\Supp}{{\rm Supp}}

\newcommand{\bs}{\boldsymbol}
\newcommand{\bv}{\mathbf}

\newcommand{\Var}{\mathbb{V}{\mathrm{ar}}}
\newcommand{\Cov}{\mathbb{C}{\mathrm{ov}}}
\newcommand{\mueqd}{\mu_{eq}^{\otimes d}}

\newcommand{\gfrac}[2]{\genfrac{}{}{0pt}{}{#1}{#2}}

\renewcommand{\leq}{\leqslant}
\renewcommand{\geq}{\geqslant}
\renewcommand{\phi}{\varphi}
\renewcommand{\epsilon}{\varepsilon}
\renewcommand{\d}{ {\rm d}}

\renewcommand{\emptyset}{\varnothing}

\usepackage{url}
\DeclareUrlCommand\email{\urlstyle{rm}}

\def\bx{{\bv x}}
\def\cE{\mathscr{E}}
\def\figdir{.}
\def\twofig{.47\textwidth}

\def\IND{{\bs 1}}
\def\eqsp{\,}

\title{Monte Carlo with Determinantal Point Processes}

\author{
\ R\'emi Bardenet\footnote{Univ. Lille, CNRS, Centrale Lille, UMR 9189 -- CRIStAL, 59651 Villeneuve d’Ascq Cedex, France. Email: \href{mailto:remi.bardenet@gmail.com}{\nolinkurl{remi.bardenet@gmail.com}}}
\;\qquad
\ Adrien Hardy\footnote{Univ. Lille, CNRS, Inria, UMR 8524, Laboratoire Paul Painlev\'e, F-59000 Lille, France. \\ Email: \href{mailto:adrien.hardy@univ-lille.fr}{\nolinkurl{adrien.hardy@univ-lille.fr}}}
}


\begin{document}

\maketitle



\begin{abstract}
We show that repulsive random variables can yield Monte Carlo
  methods with faster convergence rates than the typical  $N^{-1/2}$, where
  $N$ is the number of integrand evaluations. More precisely, we propose
  stochastic numerical quadratures involving determinantal point processes
  associated with multivariate orthogonal polynomials, and we obtain root mean square
  errors that decrease as $N^{-(1+1/d)/2}$, where $d$ is the dimension of the
  ambient space. First, we prove a central limit theorem (CLT) for the linear statistics of a class of determinantal point processes,
  when the reference measure is a product measure supported on a hypercube, which
  satisfies the Nevai-class regularity condition; a result which may be of
  independent interest. Next, we introduce a Monte Carlo method based on these
  determinantal point processes, and prove a CLT with explicit limiting variance
  for the quadrature error, when the reference measure satisfies a stronger
  regularity condition. As a corollary, by taking a specific reference measure
  and using a construction similar to importance sampling, we obtain a general
  Monte Carlo method, which applies to any measure with continuously derivable
  density. Loosely speaking, our method can be interpreted as a stochastic counterpart to
  Gaussian quadrature, which, at the price of some convergence rate, is easily
  generalizable to any dimension and has a more explicit error term.
\end{abstract}

\tableofcontents

\section{Introduction}
\label{s:intro}
Numerical integration, or quadrature, refers to algorithms that approximate integrals
\eq
\label{int f mu}
\int f(x)\mu(\d x),
\qe
where $\mu$ is a finite positive Borel reference measure, and where $f$ ranges
over some class of test functions $\mathscr C$. We assume for convenience that
the support $\Supp(\mu)$ of $\mu$ is included in the $d$-dimensional hypercube $I^d=[-1,1]^d$, since one
can recover this setting in most applications by means of appropriate
transformations. For any given $N$, a quadrature algorithm outputs $N$ nodes $\bv x_1,\ldots,\bv x_N\in I^d$ and weights $w_1,\ldots,w_N\in\R$ so that the approximation
\eq
\label{e:quadrature}
\sum_{i=1}^N w_i f(\bv x_i) \approx \int f(x)\mu(\d x)
\qe
is reasonable for every $f\in\mathscr C$. The nodes and weights depend on $N$, $\mu$,
and can be realizations of random variables, but they are not allowed to depend
on $f$. The quality of a quadrature algorithm is assessed through the
approximation error
\eq
\mathscr E_N(f)= \sum_{i=1}^N w_i f(\bv x_i)-\int f(x)\mu(\d x)
\label{e:error}
\qe
and specifically its behaviour as $N\to\infty$.

 Many quadrature algorithms have been
developed: variations on Riemann summation \citep{DaRa84}, Gaussian quadrature
\citep{Gau04}, Monte Carlo methods \citep{RoCa04}, etc. In the remaining of
Section~\ref{s:intro}, we quickly review three families of such methods to
provide context for our contribution, which we then introduce in
Section~\ref{s:contribution}.

\subsection{Gaussian quadrature}
\label{s:quadrature}
Let us first assume $d=1$, so that $\mu$ is supported on $I=[-1,1]$. Let
$(\phi_k)_{k\in\N}$ be the orthonormal polynomials associated with this measure,
that is, the family of polynomials such that $\phi_k$
has degree $k$, positive leading coefficient, and $\int
\phi_k(x)\phi_\ell(x)\mu(\d x)=\delta_{k\ell}$ for every $k,\ell\in\N$.
\emph{Gaussian quadrature}, see e.g. \citep{DaRa84,Gau04,BrPe11} for
general references, then corresponds to taking for nodes $\bv x_1,\ldots,\bv x_N$ the
zeros of the $N$th degree orthonormal polynomial $\phi_N(x)$, which are real and
simple. As for the weights, Gaussian quadrature corresponds to
\eq
\label{weight Gauss}
w_i = \frac{1}{K_N(\bv x_i, \bv x_i)}\eqsp,
\qe
where we introduced the $N$th Christoffel-Darboux kernel associated  with $\mu$,
\eq
\label{CD Gauss}
K_N(x,y)=\sum_{k=0}^{N-1}\phi_k(x)\phi_k(y).
\qe

This celebrated method is characterized by the property to be exact, i.e.
$\mathscr E_N(f)=0$, for every polynomial function $f$ of degree up to $2N-1$.
This is the highest possible degree such that this holds. Gaussian quadrature is
thus particularly suitable when the test functions $f$ look like polynomials.
For instance, $\mathscr E_N(f)$ decays exponentially fast when $f$ is analytic
\citep{GaVa83}. However, although Gaussian quadrature is now two centuries old
\citep{Gau15}, optimal rates of decay for the error $\cE_N(f)$ do not seem to be
known for less regular test functions, say $f\in\mathscr C^1$, in general.  By
using Jackson's approximation theorem for algebraic polynomials, one can see
that $\mathscr E_N(f)=\cO(1/N)$ when $f\in\mathscr C^1$. Optimal decays have
been recently investigated in the particular case of the Gauss-Legendre
quadrature  \citep{XiBo12,Xia16}. However, even in the familiar Gauss-Jacobi
quadrature, optimal rates are only conjectured.

Efficient computation of the nodes and weights in Gaussian quadrature has been
an active topic of research. Classical approaches are
based on the QR algorithm, such as the Golub-Welsch algorithm, see
e.g. \citep[Section 3.5]{Gau04} for a discussion. The computational cost of
these QR approaches usually scales as $\cO(N^2)$. More recently, $\cO(N)$
approaches have been proposed for specific choices of the reference measure
\citep{GlLiRo07,HaTo13}, with parallelizable methods \citep{Bog14} further taking
down costs.

Let us stress that Gaussian quadrature is intrinsically a one-dimensional method.
Indeed, in the higher-dimensional setting where $\Supp(\mu)\subset I^d$,
although one may define multivariate orthonormal polynomials associated with
$\mu$, it is not possible to take for nodes the zeros of a multivariate
polynomial. However, if $\mu$ is a product measure
$\mu=\mu_1\otimes\cdots\otimes\mu_d$ with each $\mu_j$ supported on $I$, one
could build a grid of nodes using $d$ one-dimensional Gaussian quadratures. But
this has for consequence to rise up the one-dimensional error estimate for
$\mathcal E_N(f)$ to a power $1/d$, which essentially makes Gaussian quadrature
ineffective in higher dimensions than one or two. In fact, the same phenomenon
arises for any other grid-like product of one-dimensional quadratures; this is
commonly referred to as the curse of dimensionality.

\subsection{Monte Carlo methods}
\label{s:monteCarlo}
Monte Carlo methods \citep{RoCa04} correspond to picking up the $N$ nodes $(\bx_i)$ in
\eqref{e:quadrature} as the realizations of random variables in $I^d$. For instance,
assuming $\mu$ in \eqref{e:quadrature} has a density $\omega$ with respect to the
Lebesgue measure, \emph{importance sampling} refers to taking the $(\bx_i)$ to be
i.i.d. realizations with a so-called \emph{proposal} density $q$, and the
weights to be
\eq
\label{e:isWeights}
w_i = \frac{1}{N}\frac{\omega(\bx_i)}{q(\bx_i)}.
\qe
That way, $\cE_N(f)$ has mean zero. Provided that
\eq
\label{variance MC}
\Var \left[\frac{f(X)\omega(X)}{q(X)}\right]<\infty
\qe
where $X$ has density $q$, $\cE_N(f)$ has a standard deviation decreasing as $N^{-1/2}$, and satisfies the
classical central limit theorem: $$\sqrt N\cE_N(f)\xrightarrow[N\to\infty]{law} \mathcal N(0,\sigma_f^2),$$
where $\sigma_f^2$ equals \eqref{variance MC}. Let us stress that the cost of
importance sampling is $\cO(N)$, and that it can be easily parallelized.

When the ambient dimension $d$ becomes large, practitioners typically prefer \emph{Markov chain
  Monte Carlo} (MCMC) methods over importance sampling. This means taking $w_i=1/N$ and nodes $(\bx_i)$ to be
the realization of a Markov chain with stationary distribution $\mu$, such as the
Metropolis-Hastings chain. Under relatively weak conditions on the Markov chain and the integrand,
$\sqrt N\cE_N(f)$ then converges in distribution to a centered Gaussian
variable; see e.g. \citep[Theorem 7.32]{DoMoSt14}. The limiting variance usually grows more slowly
with the dimension $d$ than for
importance sampling; see \citep{BeCh09} for a proof in a simplified setting. This justifies the preferential use of MCMC for large $d$. In any case, the typical order of
magnitude of the error $\cE_N(f)$ for Monte Carlo methods is $N^{-1/2}$, which
is often deemed a rather slow decay.

\rev{To retain the simplicity of implementation of Monte Carlo and achieve rates faster than $N^{-1/2}$, several authors have proposed postprocessing steps.} For instance, \cite{DePo16} proposed a variant of importance sampling that still takes nodes as independent draws from some proposal density $q$, but takes weights to be
\begin{equation}
w_i = \frac{1}{N}\frac{\omega(\bx_i)}{\check{q}^{-i}(\bx_i)},
\label{e:delyonPortierWeights}
\end{equation}
where $\check{q}^{-i}$ is the so-called leave-one-out kernel estimator of the density $q$ of the nodes. Perhaps surprisingly, for smooth enough products $f\omega$ and the right tuning
of kernel parameters, $\sqrt{N}\cE_N(f)$
then converges in probability to zero. Exact rates are investigated by
\cite{DePo16}, and a central limit theorem is proven. We further discuss
their results in Section~\ref{s:discussion}.

\rev{Another postprocessing technique with fast convergence relies on control variates \citep{GlSz02}. \citet*{OaGiCh17}, for instance, sample nodes i.i.d. from $q$, and then split the nodes into two batches. The first batch is used to build an approximation $\hat f$ of the integrand $f$, with the constraint that $\int \hat f(x)\mu(\d x)$ is known. The second batch is used to build an importance sampling estimator like \eqref{e:isWeights} but targeting the residual $f-\hat{f}$. By carefully designing $\hat{f}$ and controlling the rate at which both batch sizes grow, \cite[Theorem 2]{OaGiCh17} obtain a mean square error in $N^{-7/6}$ under rather weak assumptions on measure $\mu$. The assumptions on the integrand $f$ are stronger, with $f$ to belong to a specific reproducing kernel Hilbert space. We note that \cite{LiLe17} also propose a similar postprocessing approach, but the improvement on the rate is less explicit.}

\subsection{Quasi-Monte Carlo methods}
\label{s:qmc}
Quasi-Monte Carlo methods (QMC; \citep{DiPi10, DiKuSl13}) are deterministic constructions that
focus on the uniform case, $\mu(\d x) = \d x$ in \eqref{e:quadrature}. The
cornerstone of classical QMC is the Koksma-Hlawka inequality \cite[Equation
3.15]{DiKuSl13}. This inequality bounds the error $\cE_N(f)$ in \eqref{e:error} by the product of the star discrepancy of the nodes and the Hardy-Krause variation of $f$. The star discrepancy measures the
departure of the empirical measure of the $N$ nodes from the uniform measure.
Classical QMC methods aim at proposing efficient node constructions that
minimize this star discrepancy. Some constructions guarantee a star discrepancy that
asymptotically decreases as fast as $N^{-1}\log^{d-1}N$. This implies the same rate for $\cE_N(f)$
provided $f$ has finite Hardy-Krause variation. While this seems faster than typical Monte
Carlo methods in Section~\ref{s:monteCarlo}, the rate as a function of $N$ does
not decrease until $N$ is exponential in $d$. Moreover, the Hardy-Krause variation is hard to manipulate in practice.

Modern QMC methods come up with more practical rates \citep{DiKuSl13}. For example, scrambled nets \citep{Owe97,Owe08} are randomized QMC methods, meaning that a stochastic
perturbation is applied to a deterministic QMC construction. The perturbation is
built so that $\cE_N(f)$ has mean $0$. \cite{Owe97} shows that
only assuming $f$ is $L^2$, the standard deviation of $\cE_N(f)$ is $o(N^{-1/2})$, that is,
converges to zero faster than the traditional Monte Carlo rate. When $f$ is
smooth enough, which requires at least that all mixed partial derivatives of $f$ of order less than $d$ are continuous, \cite{Owe08} further shows that the standard deviation is $\cO(N^{-3/2-1/d}\log^{(d-1)/2} N)$. Again, this rate decreases only when $N$ is exponential in the dimension, but \cite{Owe97} shows that for finite $N$, randomized QMC cannot perform significantly worse than Monte Carlo.

\rev{Finally, we note that nonparametric control variates have also been studied for QMC and randomized QMC \citep{OaGi16}. While bounds on the error still depend on the rather strong hypotheses of QMC, in particular the smoothness of the integrand, this postprocessing has the advantage of partially bypassing the need for the user to know the degree of smoothness in advance.}

\subsection{Bayesian quadrature}
\label{s:bq}
\cite{Hag91} remarked that if we put a Gaussian process prior \citep{RaWi06} over the integrand, then the conditional of its integral given $N$ evaluations is a univariate Gaussian, with a closed-form mean and variance. Picking up nodes by sequentially minimizing this posterior variance yields a range of recent algorithms, such as kernel herding \citep{ChWeSm10} or Bayesian quadrature \citep{HuDu12}. There is empirical evidence \citep{HuDu12} that the error $\cE_N(f)$ in Bayesian quadrature decreases faster than the Monte Carlo rate $N^{-1/2}$. There are theoretical results for hybrid methods between Monte Carlo and Bayesian quadrature \citep{BOGO15}, see Section~\ref{s:discussion} for further discussion.

The nodes and weights of Bayesian quadrature require inverting an $N\times N$ matrix and thus the computational cost of the method is $\cO(N^3)$. Although this cost may seem prohibitive, the approach is justified in some important applications where this cubic computational cost is negligible compared to the cost of one evaluation of the integrand.

\subsection{Our contribution}
\label{s:contribution}
Our main goal is to leverage repulsive particle systems to build
a Monte Carlo method with standard deviation of the error decaying as $o(N^{-1/2})$. More precisely, the idea is to use correlated random
variables for the quadrature nodes, interacting as strongly repulsive particles. Our
motivation comes from specific models in  random matrix theory (see Section
\ref{sec CLT MOPE} for references), for which the linear statistic $\sum
f(x_i)$ converges in distribution to a Gaussian, without requiring any normalizing
factor. In this work, we focus on \emph{determinantal point processes} (DPPs),
which have received a lot of attention recently in probability and related fields, see for instance
\citep{Sos02,Lyo03,HKPV06, Joh06, KuTa12, LaMoRu15} for a general overview.



In any dimension $d$, we construct DPPs generating the
nodes $\bv x_1,\ldots,\bv x_N$ and appropriate weights $w_i$'s so that the error
$\cE_N(f)$ in \eqref{e:error} decreases rapidly, as
$N\to\infty$.
\rev{The general construction of our DPP for an arbitrary $\mu$ is relatively sophisticated, and will be the topic of Section~\ref{sec intro MOPE}. At this stage, we illustrate our results in the specific case where $\mu$ is the uniform measure on the hypercube $I^d$.}

\rev{
\begin{theorem}
  \label{t:legendre} Let $(P_k)_{k\in\N}$ be the Legendre polynomials defined by recurrence,
$$
P_0(x):=1,\quad P_1(x):=x,\quad P_{k+1}(x):=\frac{2k+1}{k+1} xP_k(x)-\frac k{k+1}P_{k-1}(x),
$$
and consider, for any $M\geq 1$ and  $x,y\in\R^d,$  the kernel
$$
K_N(x,y):=\prod_{j=1}^d\sum_{k=0}^{M-1}\big(k+\frac12\big)P_k(x_j)P_k(y_j),
$$
where $N:=M^d$. Let $\bv x_1,\ldots,\bv x_N$ be sampled with density on $I^d$
\eq
\label{preDPP}
\frac1{N!}\det\Big[K_N(x_k,x_\ell)\Big]_{k,\ell=1}^N.
\qe
Then, for any $\mathscr C^1$ function $f$ that is compactly supported within the open hypercube $(-1,1)^d$,
$$
\E\left[\,\sum_{i=1}^N  \frac{f(\bv x_i)}{K_N(\bv x_i,\bv x_i)}\right]=\int_{I^d} f(x) \,\d x
$$
and the quadratic error satisfies
$$
\E\left[\,\sum_{i=1}^N  \frac{f(\bv x_i)}{K_N(\bv x_i,\bv x_i)}-\int_{I^d} f(x) \,\d x\right]^2=\cO\left( \frac1{N^{1+1/d}}\right),
$$
as $N\to\infty$. Moreover, we have a central limit theorem:
$$
\sqrt{N^{1+1/d}}\left(\,\sum_{i=1}^N  \frac{f(\bv x_i)}{K_N(\bv x_i,\bv x_i)}-\int_{I^d} f(x) \,\d x\right)\xrightarrow[N\to\infty]{law} \mathcal N(0,\Omega_{f,1}^2)\, ,
$$
where $\Omega_{f,1}$ has an explicit expression involving the regularity of $f$, see \eqref{omega f} below.
\end{theorem}
}
\rev{
Sampling the distribution in \eqref{preDPP}, which is called a multivariate Legendre ensemble, can be done exactly in time $\cO(N^3)$, if we neglect the cost of rejection sampling steps, see Section~\ref{s:sampling}. For the sake of illustration, a sample is shown in Figure~\ref{f:legendre2D} in the case $d=2$. For each $i$, the area of the disk centered at $\bx_i$ is proportional to its weight $1/K_N(\bx_i,\bx_i)$ in the quadrature rule of Theorem~\ref{t:legendre}. Points are well-spread throughout the square, more so than under independent uniform sampling, with a visible accumulation along the border of the square compensated by smaller weights. The green lines on the side plots show the marginals of the uniform measure on the square. The histograms on the side plots are weighted empirical histograms of the sample.
}
\begin{figure}
  \centering
  \subfigure[2D sample]{
  \includegraphics[width=\twofig]{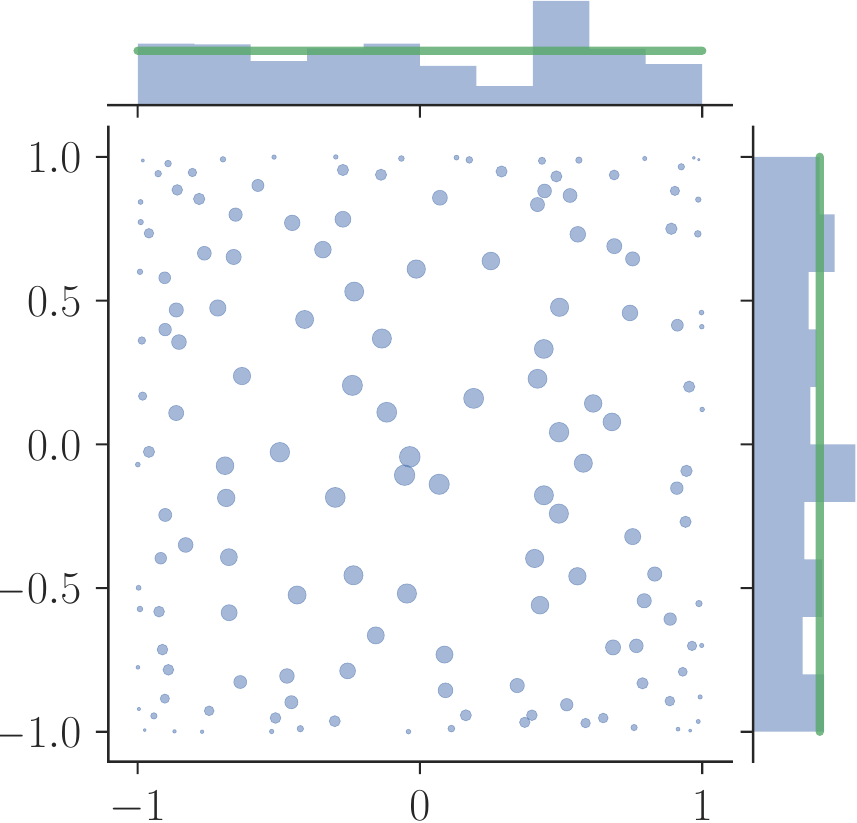}
  \label{f:legendre2D}
  }
  \hfill
  \subfigure[The graded lexicographic order]{
  \includegraphics[width=.4\textwidth]{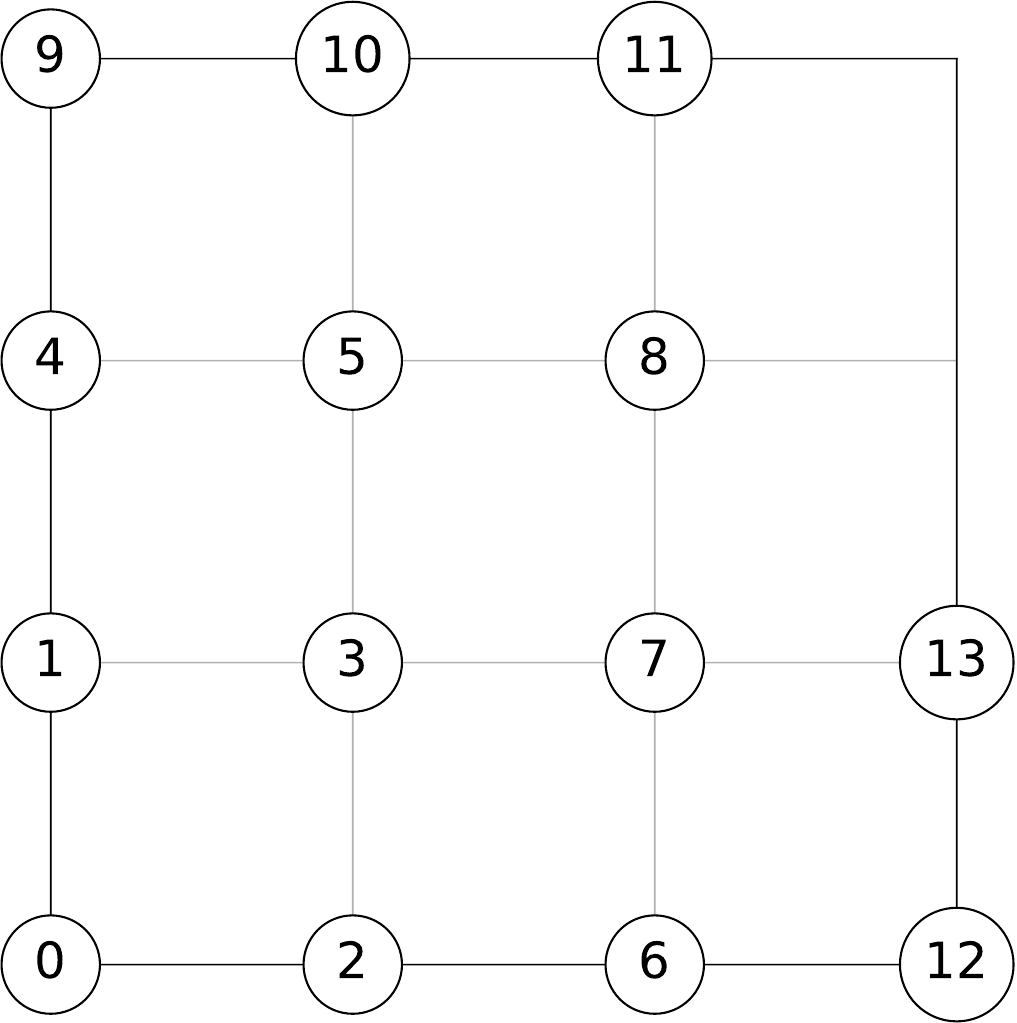}
  \label{f:order}
  }
  \caption{\rev{\ref{f:legendre2D} A weighted sample of size $N=150$ of the bivariate Legendre ensemble. \ref{f:order} The graded lexicographic order in $d=2$: the lower left corner marks the origin $(0,0)\in\mathbb{N}^2$, and the circle with mark $k$ is at coordinates $\frak b(k)$.}}
\end{figure}

In this paper we prove a more general result than Theorem~\ref{t:legendre}, where we consider a general class of measures instead of the uniform, and relax the assumption that the number of nodes $N$ is a $d$th power, see Theorem~\ref{DPPMC Th1}.
For the latter, we need to introduce an ordering on multivariate monomials, see Section~\ref{sec CLT MOPE}. We also provide an importance sampling (Theorem~\ref{DPPMC Th2}) and self-normalized importance sampling (Theorem~\ref{DPPMC Th3}) version of this result, having  applications to Bayesian inference in mind.


Computationally, our method requires sampling from a DPP. The standard algorithm
 is $\cO(N^3)$ \citep{HKPV06}, plus some overhead due to using rejection sampling routines. As it stands, the applications of Monte Carlo with
 DPPs are thus naturally the same settings as Section~\ref{s:bq}, where the
 bottleneck is the evaluation of the integrand, not the generation of the nodes.
 Such settings arise in Bayesian inference for simulation-heavy sciences such as astrophysics \citep{Tro06}, ecology \citep{PSHNTHE13}, or cell biology \citep{FNCFKSTZSCN11}, where one evaluation of the integrand requires numerically solving large systems of differential equations. We comment in
 Section~\ref{s:sampling} on open avenues for faster sampling of specific DPPs,
 which would further widen the applicability of Monte Carlo with DPPs.

It turns out that, when $d=1$, our method is formally very similar to Gaussian
quadrature, as described in Section~\ref{s:quadrature}. We basically replace the
zeros of orthogonal polynomials by particles sampled from \emph{orthogonal polynomial
ensembles} (OP Ensembles), DPPs whose building blocks are orthogonal polynomials. Our
contribution also has the advantage of generalizing more
naturally to higher dimensions than Gaussian quadrature through
multivariate OP Ensembles.

Monte Carlo with DPPs is to be classified somewhere between classical Monte Carlo
methods and QMC methods, respectively described in Sections~\ref{s:monteCarlo}
and \ref{s:qmc}. It is very much similar to importance sampling, but with
negatively correlated nodes. Simultaneously, it is more Monte Carlo than
scrambled nets, as it does not randomize \emph{a posteriori} a low discrepancy
deterministic set of points, but rather incorporate the low discrepancy constraint
into the randomization procedure. Our approach also bears similarity with postprocessing approaches to Monte Carlo with fast convergence of Section~\ref{s:monteCarlo}. Indeed, the fast convergence in our CLT is due to approximation results, much like nonparametric control variates. We further comment on this in Section~\ref{s:discussion}.

The rest of the paper is organized as follows. In Section~\ref{s:results}, we
state our quadrature rules and theoretical results on the convergence of its
error. In Section~\ref{s:experiments}, we demonstrate our results in simple
experimental settings. We conclude with some
perspectives in Section~\ref{s:discussion}. In Appendix~\ref{s:outlineOfProofs}, we introduce key technical notions and give the outline of our proofs, the technical parts of the proofs being detailed in Appendices~\ref{proof th1 section} and \ref{proof th2 section}. Appendix~\ref{a:figures} contains additional experimental results.

\section{Statement of the results}
\label{s:results}

\paragraph{Notation.}
All along this work, we write for convenience $I=[-1,1]$ and $ I^d=[-1,1]^d$.
Also, for any $0<\epsilon<1$, we set  $I_\epsilon=[-1+\epsilon,1-\epsilon]$ and
$I_\epsilon^d=[-1+\epsilon,1-\epsilon]^d$. Finally, except when specified
otherwise, a \emph{reference measure} is a positive finite Borel measure with support  \rev{ $\Supp(\mu)\subset I^d$}.

\subsection{Determinantal point processes and multivariate OP Ensembles}

\label{sec intro MOPE}
\subsubsection{Point processes and determinantal correlation functions}

A \textit{simple point process} (hereafter \emph{point process}) on $I^d$ is a
probability distribution $\P$ on  finite subsets $\bv S$ of $I^d$. A classical exhaustive reference is \citep{DaVe03}, but we also refer the reader to \citep{HKPV06}, which is shorter and contains everything needed in this paper. Given a reference measure $\mu$, a point process has a $n$-\textit{correlation function} $\rho_n$ if one has
\eq
\label{def k cor}
\mathbb E\left[\sum_{ \substack{ \rev{ \bv x_{i_1}, \ldots, \bv x_{i_n}\in\bv S}\\
\bv x_{i_1}\neq \cdots\neq \bv x_{i_n} }}
\phi( \bv x_{i_1}, \ldots, \bv x_{i_n})\right]=\int_{(I^d)^n}\phi(x_1,\ldots,x_n)\rho_n(x_1,\ldots,x_n)\mu^{\otimes n}(\d x_1,\ldots,\d x_n)
\qe
for every bounded Borel function $\phi:(I^d)^n\to\R$, where the sum in \eqref{def k
  cor} ranges over all pairwise distinct $n$-uplets of the random  finite subset $\bv S$. The function $\rho_n$, provided it exists, thus encodes the correlations between distinct $n$-uplets  of the random  set $\bv S$. For instance, \rev{a Poisson process with intensity $\lambda:I^d\rightarrow \mathbb{R}_+$ is characterized by
  \eq
  \rho_n(x_1,\dots,x_n) = \prod_{i=1}^n \lambda(x_i)
  \label{e:poissonCorr}
  \qe
   and $\d \mu(x) = \bs{1}_{I^d}(x)\d x$. In that particular case, the correlation functions \eqref{e:poissonCorr} are products of univariate terms, which can be paraphrased as \emph{there is no interaction between points in a Poisson point process}. Finally, our definition \eqref{def k cor} is easily seen to be equivalent to \cite[Definition 1]{HKPV06}, where correlation functions are also called \emph{joint intensities}. For ease of reference, we also note that correlation functions are called \emph{factorial moment densities} in \cite[Section 5.4]{DaVe03}}.

A point process is \textit{determinantal} (DPP) if there exists an appropriate kernel $K:I^d\times I^d\to\R$ \rev{or $\C$} such that the $n$-correlation function exists for every $n$ and reads
\eq
\label{def cor DPP}
\rho_n(x_1,\ldots,x_n)=\det\Big[K(x_k,x_\ell)\Big]_{k,\ell=1}^n,\qquad x_1,\ldots,x_n\in I^d.
\qe
\rev{The kernel of a DPP thus encodes how the points in the random configurations interact. The existence of a point process with \eqref{def cor DPP} as its correlation functions is, in general, a difficult question. It is easy to see that the kernel has to be positive definite, so that the right-hand side of \eqref{def cor DPP} is always non-negative. But non-negativity is not sufficient for \eqref{def cor DPP} to consistently define a point process.}

A canonical way to construct DPPs is to define so-called \emph{projection DPPs}, which generate configurations of $N$ points $\P$-almost surely, i.e. $\bv S=\{\bv
x_1,\ldots,\bv x_N\}$. More precisely, consider $N$ orthonormal functions
$\phi_0,\ldots,\phi_{N-1}$ in $L^2(\mu)$, that is, $\int
\phi_k(x)\phi_\ell(x)\mu(\d x)=\delta_{k\ell}$, and take for kernel
\eq
\label{kernel DPP}
K_N(x,y)=\sum_{k=0}^{N-1}\phi_k(x)\phi_k(y).
\qe
In this setting, it turns out that the (permutation invariant) random variables $\bv x_1,\ldots,\bv x_N$ with joint probability distribution
\eq
\label{density DPP proj}
\frac1{N!}\det\Big[K_N(x_i,x_\ell)\Big]_{i,\ell=1}^N\prod_{i=1}^N\mu(\d x_i)
\qe
generate a DPP with kernel $K_N(x,y)$. \rev{\cite[Section 2]{HKPV06} gives a proof that \eqref{density DPP proj} yields \eqref{def cor DPP} with $K=K_N$.}

For further information on determinantal
point processes, we refer the reader to \citep{Mac75, HKPV06, Joh06, Sos00b, Lyo03,KuTa12,LaMoRu15}.

\subsubsection{Multivariate OP Ensembles}
\label{def MOP}
In the one-dimensional setting, we can for instance build a DPP using
\eqref{density DPP proj} with
$\phi_0,\text{...},\phi_{N-1}$ the $N$ lowest degree orthonormal polynomials
associated with the reference measure $\mu$. Such DPPs are known as \emph{OP Ensembles} and have been popularized by random matrix theory, see e.g.
\citep{Kon05} for an overview.

Our contribution involves a higher-dimensional generalization of OP Ensembles,
relying on multivariate orthonormal polynomials, which we now introduce. Given a
reference measure $\mu$, assume it has well-defined multivariate orthonormal
polynomials, meaning that $\int P^2(x) \mu(\d x)>0$ for every non-trivial polynomial
$P$. This is for instance true if $\mu(A)>0$ for some non-empty open set
$A\subset I^d$. Now choose an ordering for the multi-indices
$(\alpha_1,\ldots,\alpha_d)\in\N^d$, that is, pick a bijection $\frak b:\N\to\N^d$. This gives an ordering of the monomial functions $(x_1,\ldots,x_d)\mapsto x_1^{\alpha_1}\cdots x_d^{\alpha_d}$,
to which one applies the Gram-Schmidt algorithm. This yields a sequence of
orthonormal polynomial functions $(\phi_k)_{k\in\N}$, the multivariate
orthonormal polynomials. In this work, we use a specific
bijection $\frak b$ defined in Section~\ref{graded lex order}.

Equipped with this sequence $(\phi_k)_{k\in\N}$ of multivariate orthonormal
polynomials,  we finally consider for every $N$ the DPP associated with the
associated kernel \eqref{kernel DPP}, that we refer to as the \emph{multivariate OP
Ensemble} associated with a reference measure $\mu$. When $d=1$, it reduces to
the classical OP Ensemble.

\subsubsection{The graded lexicographic order and the bijection $\frak b$}
\label{graded lex order}
We consider the bijection $\frak b$ associated with the graded (with respect to
the sup norm) alphabetic order on $\N^d$. We start with the usual lexicographic
order on $\N^d$, defined by saying that $(\alpha_1,\ldots,\alpha_d)<_{{\rm
    lex}}(\beta_1,\ldots,\beta_d)$ if there exists $j\in\{1,\ldots,d\}$ such
that $\alpha_i=\beta_i$ for every $i<j$ and $\alpha_j<\beta_j$. Now we define
the \emph{graded lexicographic order} as follows. We say that $(\alpha_1,\ldots,\alpha_d)<(\beta_1,\ldots,\beta_d)$ if either $\max\{\alpha_1,\ldots,\alpha_d\}<\max\{\beta_1,\ldots,\beta_d\}$ or $
\max\{\alpha_1,\ldots,\alpha_d\}=\max\{\beta_1,\ldots,\beta_d\}$ and $ (\alpha_1,\ldots,\alpha_d)<_{{\rm lex}}(\beta_1,\ldots,\beta_d)$.
Moreover, from now on we specify the bijection $\frak b$ to be the unique
bijection $\N\to\N^d$ increasing for this order. Otherly put, set $\frak b(0)=(0,\ldots,0)$ and
$\frak b(n)=\min \;\N^d\setminus\big\{\frak b(0),\ldots, \frak b(n-1)\big\}$ by
induction, where the minimum refers to the graded lexicographic order. An
important feature of this ordering on which our proofs rely is that, for every $M\geq 1$, the set of the first $M^d$ indices $\{\frak b(0),\ldots,\frak b(M^d-1)\}$ matches the  discrete hypercube
\eq
\label{def CM}
\mathcal C_M= \Big\{\bs n\in\N^d :\; 0\leq n_1,\ldots, n_d\leq M-1\Big\}.
\qe
The indices between $\frak b(M^d-1)$ and $\frak b(M^{d+1}-1)$ then fill the layer $\mathcal C_{M+1}\setminus \mathcal C_M$ by following the usual lexicographic order. \rev{For better intuition, we illustrate the order $\frak b$ for $d=2$ in Figure~\ref{f:order}; observe how each layer is filled one after the other.}

We are now in position to state our first result on multivariate OP Ensembles,
which is the cornerstone for the Monte Carlo methods we introduce later in Section~\ref{s:dppmc}.

\subsection{A central limit theorem for multivariate OP Ensembles}
\label{sec CLT MOPE}
Several central limit theorems (CLTs) have been obtained for determinantal point processes and related
models in random matrix theory, but only when the random configurations lie in a one- or two-dimensional
domain. See for instance \citep{Joh97, Joh98, DiEv01, Sos02, Pas06, Rivi07,
  Pop09, KrSh10, AmHeMa11, AmHeMa15, Ber12, Shc13, BrDu13, BrDu14, JoLa15,
  Lam15a, Lam15b} for a non-exhaustive list. Although DPPs on higher-dimensional supports have attracted attention in complex
geometry \citep{Ber09,Ber09b, Ber13, Ber14}, in statistics \citep{LaMoRu15, MNPR15}, and in physics \citep{ToScZa08, ScZaTo09},
it seems no CLT has been established yet when $d\geq 3$.

Our first result for multivariate OP Ensembles is a CLT for $\mathscr C^1$ test
functions when the reference measure $\mu$ is a product of $d$ Nevai-class
probability measures on $I$. The exact definition of the Nevai class is
postponed until Definition \ref{def Nevai class}, but we now give a simple
sufficient condition. As a
consequence of Denisov--Rakhmanov's theorem (see Theorem \ref{DR theorem}), if a
measure on $I$ has for Lebesgue decomposition $\mu(\d x)=\omega(x)\d x +
\mu_{s}$ (where $\mu_s$ is orthogonal to the Lebesgue measure) with
$\omega(x)>0$ almost everywhere, then  $\mu$ is Nevai-class. Denote by $(T_k)_{k\in\mathbb{N}}$
the normalized Chebyshev polynomials, defined on $I$ by
$$
T_0=1,\qquad T_k(\cos\theta)=\sqrt 2\cos(k\theta),\qquad k\geq 1.
$$

\begin{theorem}
\label{th CLT general}
Let $\mu$ be a reference measure with $\Supp(\mu)\subset I^d$, and assume
$\mu=\mu_1\otimes \cdots\otimes\mu_d$ where each $\mu_j$ is Nevai class (see
Definition \ref{def Nevai class}). If $\bv x_1,\ldots,\bv x_N$ stands for the
associated multivariate OP Ensemble associated with $\mu$, then for every $f\in\mathscr C^1(I^d,\R)$, we have
$$
 \frac1{\sqrt{N^{1-1/d}}}\left(\ \sum_{i=1}^Nf(\bv x_i) -  \E\left[\,\sum_{i=1}^Nf(\bv x_i)\right]\right)\xrightarrow[N\to\infty]{law } \mathcal N(0,\sigma_f^2),
$$
where
\eq
\sigma_f^2=\frac12\sum_{k_1,\ldots,k_d=0}^\infty (k_1+\cdots+k_d){ \hat f(k_1,\ldots,k_d)}^2
\qe
and
\eq
\label{f hat}
\hat f(k_1,\ldots,k_d)=\int_{I^d} f(x_1,\ldots,x_d)\prod_{j=1}^d T_{k_j}(x_j) \frac{\d x_j}{\pi\sqrt{1-x_j^2}}.
\qe
\end{theorem}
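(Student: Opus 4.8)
The plan is to run the cumulant method for linear statistics of determinantal point processes, after exploiting the graded lexicographic ordering to reduce to a subsequence on which the correlation kernel tensorizes. By \eqref{def CM}, the first $M^d$ multivariate orthonormal polynomials span the same subspace of $L^2(\mu)$ as the tensor products $\bigotimes_{j}\phi^{(j)}_{\alpha_j}$ with $0\le\alpha_j\le M-1$, where $(\phi^{(j)}_n)_n$ denotes the univariate orthonormal polynomials of $\mu_j$; since $K_N$ in \eqref{kernel DPP} is the reproducing kernel of that subspace (hence basis-independent), one gets, for $N=M^d$, the product formula $K_{M^d}(x,y)=\prod_{j=1}^d K^{(j)}_M(x_j,y_j)$ with $K^{(j)}_M(s,t)=\sum_{n<M}\phi^{(j)}_n(s)\phi^{(j)}_n(t)$. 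I would prove the theorem first along $N=M^d$, then deduce the general case by checking that replacing $M^d$ by any $N\in[M^d,(M+1)^d)$ perturbs the centred statistic by a term of variance $o(N^{1-1/d})$; this concerns only the $O(M^{d-1})$ extra basis functions sitting in the boundary layer $\mathcal C_{M+1}\setminus\mathcal C_M$, and is controlled by the uniform variance estimate below.

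\textbf{Cumulant method.} For a projection DPP with kernel $K$, the cumulant generating function of $\sum_i f(\bv x_i)$ is $\log\det(I+(e^{tf}-1)K)$, so the $m$-th cumulant $C_m$ is a universal polynomial in traces of alternating products of $K$ and the multiplication operator by $f$. Using the projection identity $\int K(x,z)K(z,y)\mu(\d z)=K(x,y)$ and the commutator $[K,f]$, whose Hilbert--Schmidt norm obeys $\|[K,f]\|_{\mathfrak S_2}^2=\iint(f(x)-f(y))^2K(x,y)^2\mu^{\otimes 2}=2\Var[\sum_i f(\bv x_i)]$, the classical commutator estimates (Soshnikov and subsequent authors) give $C_2=\Var[\sum_i f(\bv x_i)]$ and $|C_m|\le c_m\|f\|_\infty^{m-2}\Var[\sum_i f(\bv x_i)]$ for all $m\ge 2$. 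Hence, once I show $\Var[\sum_i f(\bv x_i)]\sim N^{1-1/d}\sigma_f^2$, it follows that $C_m/(N^{1-1/d})^{m/2}\to 0$ for every $m\ge 3$, and the CLT follows since a Gaussian law is determined by its moments.

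\textbf{The variance.} I would use the identity $\Var[\sum_i f(\bv x_i)]=\sum_{\alpha\in\mathcal C_M}\|(\mathrm{Id}-P_{M^d})(f\phi_\alpha)\|_{L^2(\mu)}^2$, where $P_{M^d}$ is the orthogonal projection onto $\Span\{\phi_\beta:\beta\in\mathcal C_M\}$ and $\phi_\alpha=\bigotimes_j\phi^{(j)}_{\alpha_j}$, and treat two regimes. Writing $g=\sum_{\bv k}\hat g(\bv k)\bigotimes_j T_{k_j}$, multiplication by $T_k(x_j)$ acts on the $\phi^{(j)}$-basis as the banded operator $T_k(J^{(j)})$, $J^{(j)}$ being the Jacobi matrix of $\mu_j$; the Nevai condition $a^{(j)}_n\to\tfrac12,\ b^{(j)}_n\to 0$ (Definition~\ref{def Nevai class}, Theorem~\ref{DR theorem}) forces the row-$n$ entries of $T_k(J^{(j)})$ to converge, as $n\to\infty$, to those of the free Jacobi matrix $T_k(J_0)$, i.e.\ to the Chebyshev product coefficients coming from $T_kT_n=\tfrac1{\sqrt2}(T_{n+k}+T_{n-k})$ ($n>k\ge1$). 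This yields, first, a uniform bound $\Var[\sum_i g(\bv x_i)]\le C\,M^{d-1}\sum_{\bv k}(1+|\bv k|_1)\hat g(\bv k)^2$, which, applied to $g=f-\sum_{\bv k\in F}\hat f(\bv k)\bigotimes_j T_{k_j}$ and combined with $\sum_{\bv k}(1+|\bv k|_1)\hat f(\bv k)^2<\infty$ for $f\in\mathscr C^1$ (a single derivative controls the $|\bv k|_1$-weighted Chebyshev energy), reduces the whole statement to $f$ a finite Chebyshev polynomial. For such $f$ the computation is explicit: only the $\alpha$ having exactly one coordinate $j$ in a top band of width $k_j$ matter (two bands contribute $O(M^{d-2})$); the $\alpha_j$-sum over that band converges to $k_j/2$ (from $\bar c^{(k)}_{+k}=1/\sqrt2$ and vanishing of the intermediate coefficients), while each $\alpha_i$-sum for $i\ne j$ behaves like $\int T_{k_i}T_{k'_i}K^{(i)}_M(t,t)\mu_i(\d t)\sim M\int T_{k_i}T_{k'_i}\,\frac{\d t}{\pi\sqrt{1-t^2}}=M\delta_{k_i k'_i}$ (convergence of $\tfrac1M K^{(i)}_M(t,t)\mu_i(\d t)$ to the equilibrium measure of $I$, again from the Nevai condition). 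Since distinct Chebyshev modes have disjoint coefficient supports they end up asymptotically uncorrelated, and summing over $j$ and over modes gives $\Var[\sum_i f(\bv x_i)]\sim M^{d-1}\cdot\tfrac12\sum_{\bv k}|\bv k|_1\hat f(\bv k)^2=N^{1-1/d}\sigma_f^2$.

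\textbf{Main obstacle.} The hard part is making the uniform variance bound genuinely uniform in $M$: the Nevai condition provides $a^{(j)}_n\to\tfrac12,\ b^{(j)}_n\to0$ with no rate, so the deviation of $T_k(J^{(j)})$ from $T_k(J_0)$ near row $M$ is not quantitatively small, and one must instead arrange the estimate so that the only quantities required to converge are Cesàro averages $\tfrac1M\sum_{n<M}(\cdot)$, which do converge under Nevai, using the continuity of $f$ (and of $\nabla f$ for the weighted bound) to reduce to honest polynomials interacting with the banded recurrence in a controlled way. Propagating this uniformity through all $d$ coordinates at once, and through the $O(M^{d-1})$ boundary indices needed to pass from the subsequence $N=M^d$ to arbitrary $N$, is where most of the technical effort goes; the higher-cumulant bounds, by contrast, only invoke the crude Lipschitz/$L^\infty$ control and are comparatively soft.
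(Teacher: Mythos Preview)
Your approach is essentially the paper's: reduce the CLT to a variance asymptotic via Soshnikov-type cumulant bounds, compute the variance first for Chebyshev test functions by exploiting the banded action of multiplication on the orthonormal basis, transfer this to general Nevai-class product measures by comparing recurrence coefficients, and extend to $\mathscr C^1$ by approximation. The paper carries out precisely these steps (Lemma~\ref{cov representation} is your variance identity, Proposition~\ref{cov cheby prop} your boundary-band computation, Proposition~\ref{general cov} the Nevai comparison, and Section~5.4 the density argument, with the cruder Lipschitz bound $\Var[\sum f(\bv x_i)]\le\|f\|_{\rm Lip}^2\sum_j\Var[\sum e_j(\bv x_i)]$ in place of your weighted Chebyshev energy bound).

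There is however a genuine gap: for $d=1$ the scaling $N^{1-1/d}$ equals $1$, the variance stays bounded, and your bound $|C_m|\le c_m\|f\|_\infty^{m-2}\Var$ only gives $C_m=O(1)$; the higher cumulants need not vanish and the cumulant method fails outright. The one-dimensional case requires a fundamentally different (strong Szeg\H{o}-type) argument, and the paper explicitly restricts its proof to $d\ge 2$, citing Breuer--Duits for $d=1$. A second point needing more than you have written is the passage from $N=M^d$ to general $N$: the boundary layer $\mathcal C_{M+1}\setminus\mathcal C_M$ contains $O(M^{d-1})$ indices, each potentially contributing $O(1)$ to the variance, so a naive count gives only $O(N^{1-1/d})$, not $o(N^{1-1/d})$; your ``uniform variance estimate'' cannot close this by itself. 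The paper resolves this by a monotonicity lemma (Lemma~\ref{non-decreasing}) showing that, for each Chebyshev test function, the relevant index-set cardinalities $|\mathbb A_N[P,P]|$ are non-decreasing in $N$, which sandwiches the general-$N$ quantity between its values at $M^d$ and $(M+1)^d$. This monotonicity uses the graded lexicographic order in an essential way and is not a soft perturbation.
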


When $d=1$, Theorem \ref{th CLT general} was obtained by \cite{BrDu13}, see also
\citep{Lam15b} for an alternative proof, but the higher-dimensional case $d\geq
2$ is novel. We shall restrict to $d\geq 2$ for the proof of the theorem, which
is deferred to Section \ref{proof th1 section}. Let us now make a few remarks
concerning the statement of Theorem \ref{th CLT general}.

\begin{remark} The limiting variance $\sigma_f^2$ does not depend on the reference measure $\mu$. \end{remark}

\begin{remark} By making the change of variables $x_j=\cos\theta_j$, we obtain
$$
\hat f(k_1,\ldots,k_d)=\frac {(\sqrt 2)^{|\{ j :\; k_j\neq 0 \}|}}{\pi^{d}}\int_{[0,\pi]^d} f(\cos\theta_1,\ldots,\cos\theta_d)\prod_{j=1}^d \cos(k_j\theta_j)\d \theta_j,
$$
which is, up to a multiplicative factor, a usual Fourier coefficient.
\end{remark}

\rev{As a side note, we obtain that the limiting variance in Theorem~\ref{th CLT general} is dominated by an explicit integral, that may be of interest to bound $\sigma_f^2$ in practice.}
\begin{proposition}
\label{bound limit var}
For any $f\in\mathscr C^1(I^d,\R)$,  we have the inequality
\eq
\label{Poincare ineq2}
\sigma_f^2   \leq \frac 1{2} \sum_{\alpha=1}^d \int_{I^d}  \Big( \sqrt{1-x_\alpha^2}\,\partial_\alpha f(x_1,\ldots,x_d)\Big)^2 \,\prod_{j=1}^d\frac{\d x_j}{\pi\sqrt{1-x_j^2}}.
\qe
\end{proposition}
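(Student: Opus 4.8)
The plan is to expand the test function $f$ in the multivariate Chebyshev basis and compare its Dirichlet energy term-by-term with the expression for $\sigma_f^2$. Write $f(x_1,\ldots,x_d) = \sum_{k_1,\ldots,k_d\geq 0} \hat f(k_1,\ldots,k_d)\prod_{j=1}^d T_{k_j}(x_j)$, the convergence holding in $L^2$ of the product arcsine measure $\prod_j \tfrac{\d x_j}{\pi\sqrt{1-x_j^2}}$, which I denote $\mueqd$ (or rather its normalized analogue). Since $f\in\mathscr C^1(I^d,\R)$, each partial derivative $\partial_\alpha f$ is continuous, hence bounded on $I^d$, so the right-hand side of \eqref{Poincare ineq2} is finite and the formal manipulations below are justified.

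The key computation is the following one-dimensional identity. Under the change of variable $x=\cos\theta$, one has $T_k(\cos\theta)=\sqrt 2\cos(k\theta)$ for $k\geq 1$, so that $\sqrt{1-x^2}\,\tfrac{\d}{\d x}T_k(x) = -\sqrt{1-x^2}\cdot\tfrac{1}{\sin\theta}\cdot\tfrac{\d}{\d\theta}(\sqrt2\cos k\theta) = \sqrt2\, k\cos\theta$-free: more precisely $\sqrt{1-x^2}\,T_k'(x) = \sqrt 2\, k \sin(k\theta)$ up to sign, and $\{\sqrt2\sin(k\theta)\}_{k\geq 1}$ is again orthonormal for $\tfrac{\d\theta}{\pi}$ on $[0,\pi]$. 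Hence, differentiating the expansion of $f$ in the $\alpha$-th variable, multiplying by $\sqrt{1-x_\alpha^2}$, squaring and integrating against $\mueqd$, Parseval gives
\[
\int_{I^d}\Big(\sqrt{1-x_\alpha^2}\,\partial_\alpha f\Big)^2\,\mueqd \;=\; \sum_{k_1,\ldots,k_d\geq 0} k_\alpha^2\,\hat f(k_1,\ldots,k_d)^2.
\]
Summing over $\alpha=1,\ldots,d$ and inserting the factor $\tfrac12$, the right-hand side of \eqref{Poincare ineq2} equals $\tfrac12\sum_{k_1,\ldots,k_d}(k_1^2+\cdots+k_d^2)\,\hat f(k_1,\ldots,k_d)^2$. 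Comparing with $\sigma_f^2 = \tfrac12\sum (k_1+\cdots+k_d)\,\hat f(k_1,\ldots,k_d)^2$, the inequality reduces to the elementary bound $k_1+\cdots+k_d \leq k_1^2+\cdots+k_d^2$, valid since each $k_j\in\N$ satisfies $k_j\leq k_j^2$. This finishes the proof.

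The main obstacle is not any single hard step but rather the care needed to justify that differentiating the Chebyshev series of $f$ term-by-term is legitimate and that Parseval applies to $\sqrt{1-x_\alpha^2}\,\partial_\alpha f$; one must check this function lies in $L^2(\mueqd)$ (it does, being bounded) and that its Chebyshev–sine coefficients are exactly $k_\alpha \hat f(\cdot)$. A clean way to avoid differentiating a possibly slowly-convergent series is to instead compute directly: integrate by parts in the integral defining $\int \sqrt{1-x_\alpha^2}\,\partial_\alpha f \cdot \sqrt2\sin(k_\alpha\theta_\alpha)\,\cdots$ to move the derivative onto the trigonometric factor, the boundary terms vanishing because $\sin(k\theta)$ vanishes at $\theta=0,\pi$; this recovers $k_\alpha\hat f$ without any term-by-term differentiation, and then Parseval for the (genuinely convergent) expansion of the bounded function $\sqrt{1-x_\alpha^2}\,\partial_\alpha f$ gives the claimed identity.
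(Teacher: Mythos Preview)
Your proposal is correct and follows essentially the same route as the paper: both compute the right-hand side of \eqref{Poincare ineq2} via Parseval to obtain $\tfrac12\sum_{\bs k}(k_1^2+\cdots+k_d^2)\hat f(\bs k)^2$ and then conclude from $k_j\leq k_j^2$. The only cosmetic difference is that the paper phrases the one-dimensional identity through the Chebyshev polynomials of the second kind $U_{k-1}$ and the semi-circle measure (via $T_k'=kU_{k-1}$), whereas you use the equivalent trigonometric formulation $\sqrt{1-x^2}\,T_k'(x)=\sqrt2\,k\sin(k\theta)$ under $x=\cos\theta$.
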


It will appear from the proof we provide in Section~\ref{s:boundOnLimitVar} that this inequality is sharp, since equality holds whenever $f$ is a linear combination of monomials $x_1^{\alpha_1}\cdots x_d^{\alpha_d}$ with $\alpha_j\in\{0,1\}$; see \eqref{e:identityToProve}.

We now turn to Monte Carlo methods based on Theorem~\ref{th CLT general}.

\subsection{Monte Carlo methods based on determinantal point processes}
\label{s:dppmc}
Consider a reference measure $\mu$ with support inside $I^d$, having
well-defined multivariate orthonormal polynomials (say, $\mu(A)>0$ for some open
set $A\subset I^d$). Let $K_N(x,y)$ be the $N$th Christoffel-Darboux kernel for the associated multivariate OP Ensemble, namely
\eq
K_N(x,y)=\sum_{k=0}^{N-1}\phi_k(x)\phi_k(y),
\label{e:CDKernel}
\qe
where $(\phi_k)_{k\in\N}$ is the sequence of multivariate orthonormal
polynomials associated with $\mu$ and the graded lexicographic order, see
Section~\ref{graded lex order}. Our
quadrature rule is as follows: take for nodes $\bv x_1,\ldots,\bv x_N$ the
random points coming from the multivariate OP Ensemble, namely with joint
density \eqref{density DPP proj}, and for weights $w_i=1/K_N(\bv x_i,\bv x_i)$.
Thus, for any $\mu$-integrable function $f$, our estimator of $\int f (x)\mu(\d x)$ reads
\eq
\label{estimator}
\sum_{i=1}^N \frac{f(\bv x_i)}{K_N(\bv x_i,\bv x_i)}.
\qe
One can readily see by taking $n=1$ in \eqref{def k cor}--\eqref{kernel DPP}
that the estimator \eqref{estimator} is unbiased,
\eq
\E \left[\ \sum_{i=1}^N \frac{f(\bv x_i)}{K_N(\bv x_i,\bv x_i)}\ \right] = \int f(x)\mu(\d x).
\label{e:weightedUnbiasedness}
\qe

\begin{remark} For $d=1$, comparing \eqref{estimator} to \eqref{weight
    Gauss}--\eqref{CD Gauss} yields that our method matches Gaussian quadrature
  except for the nodes, since we replace the zeros of the univariate orthogonal
  polynomial $\phi_N$ by random points drawn from an OP Ensemble. In fact, this
  replacement is not aberrant since zeros of orthogonal polynomials and
  particles of associated OP Ensembles get arbitrarily close with high
  probability as $N\to\infty$, see \citep{Har15} for further information and
  generalizations. Notice however that our quadrature rule has the advantage to
  make sense in any dimension $d$.
\label{r:gauss}
\end{remark}

Our next result is a CLT for \eqref{estimator}, thus giving a precise rate of decay for the error made in the approximation, provided we make regularity assumptions on $\mu$ and on the class $\mathscr C$ of test functions $f$. More precisely, recalling the notation $I_\epsilon^d=[-1+\epsilon,1-\epsilon]^d$, we consider
\eq
\label{class test functions}
\mathscr{C}=\Big\{ f\in \mathscr C^1(I^d,\R) :\; {\rm Supp}(f)\subset I_\epsilon^d \mbox{ for some }\epsilon>0\Big\}.
\qe
As for the reference measure, we shall assume $\mu$ is a product measure with a
density which is $\mathscr C^1$ and positive on the open set $(-1,1)^d$. Set for convenience
\eq
\label{e:muEqTenseur}
\omega_{eq}^{\otimes d}(x)=\prod_{j=1}^d\frac{1}{\pi \sqrt{1-x_j^2}} \ ,\qquad x\in I^d.
\qe

\begin{theorem}[\rev{Crude Monte Carlo with OP Ensembles}]
\label{DPPMC Th1}
Let $\mu(\d x)=\omega(x)\d x$ be a product reference measure with $\omega(x)=\omega_1(x_1)\cdots\omega_d(x_d)$ and  $\Supp(\mu)\subset I^d$. Assume $\omega$ is $\mathscr C^1$ and positive on the open set $(-1,1)^d$, and satisfies: \rev{for every $\epsilon>0$,
\eq
\label{a:assump2}
\frac1N\sup_{x\in I_\epsilon^d}\left|\nabla K_N(x,x)\right|<\infty.
\qe}
If $\bv x_1,\ldots,\bv x_N$ stands for the multivariate OP Ensemble associated with $\mu$, then for every $f\in\mathscr C$,
we have for the mean square error of the estimator,
\eq
\lim_{N\to\infty}N^{1+1/d}\ \mathbb E\left[\left(\ \sum_{i=1}^N \frac{f(\bv x_i)}{K_N(\bv x_i,\bv x_i)}- \int f(x)\mu(\d x)\right)^2\ \right]=\Omega_{f,\omega}^2\ .
\qe
where, see \eqref{f hat},
\eq
\label{omega f}
\Omega_{f,\omega}^2= \frac1{2}\sum_{k_1,\ldots,k_d=0}^\infty (k_1+\cdots+k_d)  \widehat{ \left(\frac{f \omega}{\omega_{eq}^{\otimes d}}\right)}(k_1,\ldots,k_d)^2.
\qe
Moreover, we have the central limit theorem,
$$
\sqrt{N^{1+1/d}}\left(\ \sum_{i=1}^N \frac{f(\bv x_i)}{K_N(\bv x_i,\bv x_i)}- \int f(x)\mu(\d x)\right)\xrightarrow[N\to\infty]{law } \mathcal N\big(0,\Omega_{f,\omega}^2\big),
$$
\end{theorem}

\rev{We will discuss the assumptions of Theorem \ref{DPPMC Th1} in Section \ref{reg
  cond Th2} but let us already state that, as it will appear in the proof, \eqref{a:assump2}  can be replaced by the weaker  but technically involved Assumption~\ref{a:regularity assumption}. We restricted ourselves to \eqref{a:assump2} for the sake of presentation, as it already covers the Jacobi case. Indeed, we prove the following result in Section~\ref{reg cond Th2}.}
\begin{proposition}
\label{Jacobi = regular}
Given any parameters $\alpha_1,\beta_1,\ldots,\alpha_d,\beta_d>-1$, the
reference measure
\eq
\label{Jacobi measure}
\mu(\d x)=\prod_{j=1}^d(1-x_j)^{\alpha_j}(1+x_j)^{\beta_j} \bs 1_{I}(x_j)\d x_j,
\qe
 satisfies the assumptions of Theorem \ref{DPPMC Th1}.
\end{proposition}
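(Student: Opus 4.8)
The plan is to verify that the Jacobi measure \eqref{Jacobi measure} satisfies all the hypotheses of Theorem \ref{DPPMC Th1}: it is a product measure, its density is $\mathscr C^1$ and positive on $(-1,1)^d$, and it fulfills Assumption~\ref{a:regularity assumption}. The first two points are essentially immediate. Writing $\mu=\mu_1\otimes\cdots\otimes\mu_d$ with $\mu_j(\d x_j)=(1-x_j)^{\alpha_j}(1+x_j)^{\beta_j}\bs 1_I(x_j)\,\d x_j$ exhibits the product structure; each one-dimensional density $\omega_j(x_j)=(1-x_j)^{\alpha_j}(1+x_j)^{\beta_j}$ is smooth (in particular $\mathscr C^1$) and strictly positive on the open interval $(-1,1)$, hence $\omega=\prod_j\omega_j$ is $\mathscr C^1$ and positive on $(-1,1)^d$. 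So the only real work is checking Assumption~\ref{a:regularity assumption}.

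Since Assumption~\ref{a:regularity assumption} is only described as being introduced and commented in Section~\ref{reg cond Th2}, the argument will necessarily be phrased in the language of that section; I would structure it as follows. The assumption is a regularity condition on the one-dimensional factors (likely a quantitative control of the recurrence coefficients of the associated orthonormal polynomials, strengthening the Nevai-class condition, perhaps a summability or rate-of-convergence statement on $|a_n-\tfrac12|$ and $|b_n|$, together with some boundary control of the Christoffel--Darboux kernel near $\pm 1$). Because the assumption factorizes over coordinates — each factor $\mu_j$ being a one-dimensional Jacobi weight — it suffices to check the one-dimensional statement for a generic Jacobi measure $(1-x)^\alpha(1+x)^\beta$ on $I$ with $\alpha,\beta>-1$. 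For this I would invoke the classical, completely explicit asymptotics of Jacobi polynomials: their three-term recurrence coefficients are known in closed form and satisfy $a_n=\tfrac12+O(n^{-2})$ and $b_n=O(n^{-2})$ as $n\to\infty$, which is far stronger than Nevai-class and gives whatever summability rate Assumption~\ref{a:regularity assumption} demands. Likewise, the Christoffel function $1/K_N(x,x)$ for a Jacobi weight has well-understood behaviour, converging after normalization to the equilibrium density $\omega_{eq}$ uniformly on compacts of $(-1,1)$ with controlled boundary behaviour, via the Mehler--Heine and Darboux asymptotics (see e.g. Szeg\H{o}'s book, or Nevai's monograph). One then checks term by term that each clause of Assumption~\ref{a:regularity assumption}, restricted to one dimension, is implied by these known Jacobi asymptotics, and concludes by the product structure.

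Concretely the steps, in order, are: (i) record the product decomposition $\mu=\bigotimes_j \mu_j$ and note $\omega\in\mathscr C^1((-1,1)^d)$ with $\omega>0$ there; (ii) reduce Assumption~\ref{a:regularity assumption} to its one-dimensional instance for each $\mu_j$, using that the assumption is stated coordinatewise on the factors; (iii) quote the explicit recurrence coefficients of the Jacobi orthonormal polynomials and the resulting estimates $|a_n-\tfrac12|,|b_n|=O(n^{-2})$; (iv) quote the corresponding Christoffel--Darboux / Christoffel function asymptotics for Jacobi weights on compact subsets of $(-1,1)$ and near the endpoints; (v) match (iii)--(iv) against each requirement of Assumption~\ref{a:regularity assumption} to conclude.

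The main obstacle is not mathematical depth but bookkeeping: one must read off from the precise, as-yet-unstated formulation of Assumption~\ref{a:regularity assumption} exactly which quantities (recurrence coefficients, Christoffel functions, derivatives of $\omega$, boundary rates) need to be controlled and at which rate, and then cite or re-derive the corresponding sharp Jacobi asymptotics with uniformity on the relevant sets. Should the assumption require uniform-in-$N$ control of $\omega_{eq}^{\otimes d}$-weighted kernel ratios up to the boundary — which is delicate for the endpoint exponents $\alpha_j,\beta_j$ close to $-1$ — the careful step will be to use the classical endpoint (Bessel-type) asymptotics of Jacobi polynomials rather than the bulk (sine-kernel) asymptotics, and to patch the two regimes; this is standard but must be done with care. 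Everything else follows from the explicitness of the Jacobi family.
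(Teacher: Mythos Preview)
Your overall strategy---reduce to one dimension via the product structure, then invoke explicit Jacobi asymptotics---matches the paper's architecture. But your guess at what Assumption~\ref{a:regularity assumption} actually says is off target, and this makes steps (iii)--(v) of your plan miss the real work.

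The assumption is \emph{not} a rate condition on the recurrence coefficients. It is a uniform-integrability statement for the squared difference quotient
\[
\mathscr D_N(x,y)=\frac{N/K_N(x,x)-N/K_N(y,y)}{\|x-y\|}
\]
against the measures $Q_N(\d x,\d y)=N^{-(1-1/d)}\|x-y\|^2K_N(x,y)^2\mu(\d x)\mu(\d y)$, restricted to $I_\epsilon^d\times I_\epsilon^d$ and small $\|x-y\|$. The paper then introduces a stronger but more tractable sufficient condition (their Assumption~\ref{a:regularity assumption easy}): the sequence $N^{-1}\sup_{x\in I_\epsilon^d}\|\nabla K_N(x,x)\|$ is bounded for every $\epsilon>0$. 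So the quantity you must actually control is the \emph{gradient of the diagonal kernel}, not $a_n,b_n$ or the Christoffel function itself. Your recurrence-coefficient bounds $|a_n-\tfrac12|,|b_n|=O(n^{-2})$ do not by themselves give this; summing $|\phi_k\phi_k'|$ term by term only yields $O(N^2)$ without exploiting cancellation.

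The paper's route is: first a separability lemma showing that if each one-dimensional factor satisfies the gradient bound then so does the product (this uses the layer decomposition of $\Gamma_N=\{\frak b(0),\ldots,\frak b(N-1)\}$ and is not entirely automatic); then, for a single Jacobi weight, the derivative identity
\[
(\phi_n^{(\alpha,\beta)})'=\tfrac{u_{\alpha,\beta}}{2}\sqrt{n(n+\alpha+\beta+1)}\,\phi_{n-1}^{(\alpha+1,\beta+1)}
\]
turns $\tfrac{\d}{\d x}K_N(x,x)$ into a sum of products $\phi_k^{(\alpha,\beta)}\phi_{k-1}^{(\alpha+1,\beta+1)}$ with weights $\sim k$. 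Inserting the bulk cosine asymptotics (from \cite{KMVV04} or Szeg\H{o}) for both families, the product becomes $\sin\big((2k+\alpha+\beta+1)\theta-\pi(\alpha+\tfrac12)\big)$ up to $O(1/k)$, and the resulting oscillatory sum $\sum_{k=1}^{N-1}k\,e^{2ik\theta}$ is bounded by $CN/\sin^2\theta$ via a closed-form evaluation. That oscillatory cancellation is the crux; without it you are stuck at $O(N^2)$.

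Finally, your concern about endpoint (Bessel-type) behaviour is a red herring here: the assumption only asks for control on $I_\epsilon^d$, away from the boundary, so bulk asymptotics suffice throughout.
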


Hereafter, we call measures of the form \eqref{Jacobi measure} \emph{Jacobi
  measures}. From a practical point of view, Theorem~\ref{DPPMC Th1} requires
knowledge on
the measure $\mu$, in particular all its moments should be known, since we need
the corresponding orthonormal polynomials. This is the case for most
applications of Gaussian quadrature, where the reference measure is such that
orthonormal polynomials are computable, like Jacobi measures
\eqref{Jacobi measure} for instance. \rev{When the moments of $\mu$ are not known or when $\mu$ is not separable, we propose an importance sampling result in Theorem~\ref{DPPMC Th2}, which shifts most hypotheses onto an instrumental density $q$. Note however that Theorem~\ref{DPPMC Th2} still requires that we can evaluate the density $\omega$ of $\mu$ pointwise.
}
\begin{theorem}[\rev{Importance sampling with OP Ensembles}]
\label{DPPMC Th2}
Let $\mu(\d x)=\omega(x)\d x$ be a reference measure on $I^d$ with a $\mathscr
C^1$ density $\omega$ on the open set $(-1,1)^d$.  Consider a  measure  $q(x)\d
x$ satisfying the assumptions of Theorem \ref{DPPMC Th1}, let $K_N(x,y)$ be the
$N$th Christoffel-Darboux kernel associated with $q(x)\d x$, and $\bv x_1,\ldots,\bv x_N$ the associated multivariate OP Ensemble. Then, for every $f\in\mathscr C$, we have
\eq
\E \left[\ \sum_{i=1}^N \frac{f(\bv x_i)}{K_N(\bv x_i,\bv x_i)}\frac{\omega(\bv x_i)}{q(\bv x_i)} \ \right]= \int f(x) \mu(\d x) ,
\qe
and we have for the mean square error of the estimator,
\eq
\lim_{N\to\infty}N^{1+1/d}\ \mathbb E\left[\left(\ \sum_{i=1}^N \frac{f(\bv x_i)}{K_N(\bv x_i,\bv x_i)}\frac{\omega(\bv x_i)}{q(\bv x_i)} - \int f(x)\mu(\d x)\right)^2\ \right]=\Omega_{f,\omega}^2\ ,
\qe
where $\Omega_{f,\omega}^2$ is the same as \eqref{omega f}.  Moreover, we have the central limit theorem,
\eq
\sqrt{N^{1+1/d}}\left( \ \sum_{i=1}^N \frac{f(\bv x_i)}{K_N(\bv x_i,\bv
    x_i)}\frac{\omega(\bv x_i)}{q(\bv x_i)} - \int f(x) \mu(\d
  x)\right)\xrightarrow[N\to\infty]{law } \mathcal
N\big(0,\Omega_{f,\omega}^2\big).
\label{e:weightedCLT}
\qe
\end{theorem}

Indeed, Theorem \ref{DPPMC Th2} follows from Theorem \ref{DPPMC Th1} by taking
$f\omega/q$ for test function with $f\in\mathscr C$ and $q(x)\d x$ for reference
measure.

\begin{remark}
From a classical importance sampling perspective, it is surprising that the limiting variance in \eqref{e:weightedCLT} does not depend on the proposal density $q$.
\end{remark}


\rev{
In most applications to Bayesian inference, $\mu$ is a probability measure, but its density $\omega$ can only be evaluated up to a multiplicative constant. A classical trick is to rely on self-normalized importance sampling. Theorem~\ref{DPPMC Th3} states a central limit theorem for such an estimator.
\begin{theorem}[Self-normalized importance sampling with OP Ensembles]
\label{DPPMC Th3}
Let $\mu(\d x)=\omega(x)\d x = \omega_u(x) \d x/Z$ be a reference probability measure on $I^d$ with a $\mathscr C^1$ density $\omega$ on the open set $(-1,1)^d$. We further assume that $\mu$ is supported in $I_\epsilon^d = [-1+\epsilon,1-\epsilon]^d$. As in Theorem~\ref{DPPMC Th2}, consider a measure $q(x)\d x$ satisfying the assumptions of Theorem \ref{DPPMC Th1}, let $K_N(x,y)$ be the $N$th Christoffel-Darboux kernel associated with $q(x)\d x$, and $\bv x_1,\ldots,\bv x_N$ the associated multivariate OP Ensemble. Finally, for convenience, we let
$$ \mathscr{I}_N(g) = \sum_{i=1}^N \frac{g(\bv x_i)}{K_N(\bv x_i,\bv
    x_i)}\frac{\omega_u(\bv x_i)}{q(\bv x_i)}, \quad g:I^d\rightarrow\mathbb{R}.$$
Then, for every $f\in\mathscr C$, we have
\begin{align}
\sqrt{N^{1+1/d}}\left( \mathscr{I}_N(f)/\mathscr{I}_N(1) - \int f(x) \mu(\d
  x)\right) \xrightarrow[N\to\infty]{law } \mathcal
N\big(0,\Xi_{f,\omega}^2\big),
\label{e:selfNormalizedWeightedCLT}
\end{align}
where $$\Xi_{f,\omega}^2 = \Omega^2_{f,\omega} -2c_{f,\omega}\int f\d\mu + \Omega_{1,\omega}^2\left(\int f\d\mu\right)^2 \geq 0,$$
and
$$ c_{f,\omega} = \frac1{2}\sum_{k_1,\ldots,k_d=0}^\infty (k_1+\cdots+k_d)  \widehat{ \left(\frac{f \omega}{\omega_{eq}^{\otimes d}}\right)}(k_1,\ldots,k_d) \times \widehat{ \left(\frac{\omega}{\omega_{eq}^{\otimes d}}\right)}(k_1,\ldots,k_d).$$
\end{theorem}
}
\rev{
We prove Theorem~\ref{DPPMC Th3} in Section~\ref{proofSelfnormal} using the same arguments as for classical importance sampling \cite[Section 7.1.3]{Sch12}, but replacing the standard CLT by Theorem~\ref{DPPMC Th2}.
}

\subsection{Sampling a multivariate OP Ensemble}
\label{s:sampling}
For Monte Carlo with DPPs to be a practical tool, we need to be able to sample
realizations of the random variables $\bv x_1,\ldots,\bv x_N$ with joint density \eqref{density DPP
  proj}. \cite{HKPV06} give an algorithm for sampling generic DPPs, which we use
here; see also \citep{ScZaTo09, LaMoRu15, OlNaTr15} for more details. In terms of code, \rev{a companion Python package to the current paper is available\footnote{\url{https://github.com/rbardenet/dppmc}}, which implements the OPE sampling described in this section and used later for the experiments in Section~\ref{s:experiments}}. A more efficient implementation of the same OPE sampling algorithm, along with most known DPP sampling algorithms, can also be found in the Python package DPPy\footnote{\url{https://github.com/guilgautier/DPPy}} \citep*{GaBaVa18Sub}.

The algorithm is based on the fact that the chain rule for the joint distribution \eqref{density DPP
  proj} is available as
\eq
\label{joint density DPP}
\frac1{N!}\det\Big[K_N(x_i,x_\ell)\Big]_{i,\ell=1}^N \prod_{i=1}^N\mu(\d x_i)= \prod_{i=1}^N
\frac{1}{N-i+1} \Big\Vert P_{H_{i-1}} K_N(x_i,\cdot)\Big\Vert_{L^2(\mu)}^2 \mu(\d x_i).
\qe
In \eqref{joint density DPP}, $P_H$ is the orthogonal projection onto a subspace $H$ of $L^2(\mu)$,
$$ H_0 = \text{Span}(\phi_0,\dots,\phi_{N-1}),$$
and $H_{i-1}$ is the orthocomplement in $H_0$ of
$$
\text{Span}\left(K_N(x_\ell,\cdot), \; 1\leq \ell\leq i-1\right)
$$
for every $i>1$. In particular, all the terms in the product of the RHS of \eqref{joint density
  DPP} are probability measures \citep[Proposition 19]{HKPV06}. Notice that the factorization \eqref{joint density DPP} is the equivalent of the ``base
times height'' formula that computes the squared volume of the parallelotope
generated by the vectors $(\phi_0(x_i),\ldots,\phi_{N-1}(x_i))$ for $1\leq
i\leq N$.

Using the normal equations, we can also rewrite
each term in the product \eqref{joint density DPP}
\eq
\Big\Vert P_{H_{i-1}} K_N(x_i,\cdot)\Big\Vert_{L^2(\mu)}^2 =
\begin{cases}
K_N(x_1,x_1) & \text{if } i=1,\\
 K_N(x_i,x_i) - {\bf k}_{1:i-1}(x_i)^T {\bf K}_{1:i-1}^{-1} {\bf k}_{1:i-1}(x_i) & \text{else,}
\end{cases}
\label{e:chainRuleGPForm}
\qe
where
$$ {\bf k}_{1:i-1}(\cdot) = \left( K_N(x_1,\cdot),\dots,K_N(x_{i-1},\cdot) \right)^T $$
and
$$ {\bf K}_{1:i-1} = \Big[K_N(x_k,x_\ell)\Big]_{1\leq k,\ell \leq i-1}.$$
\begin{remark}
Equation~\eqref{e:chainRuleGPForm} will be familiar to users of Gaussian
processes (GPs; \citealt{RaWi06}): the unnormalized conditional densities
\eqref{e:chainRuleGPForm} are the incremental posterior variances in a GP model
with the same kernel.
\end{remark}
\rev{
\begin{remark}
Evaluating \eqref{e:chainRuleGPForm} requires evaluating $K_N$, or equivalently the polynomials $\phi_k$ for $k=0,\dots,N-1$. This can be efficiently implemented using the three-term recurrence relations for orthogonal polynomials, when the recurrence coefficients are known; see e.g. \citep[Section 1.3]{Gau04} for whom this recurrence is ``arguably the single most important piece of information for the constructive and computational use of orthogonal polynomials".
\end{remark}
}
\rev{In a nutshell, sampling a multivariate OPE amounts to sampling from each conditional \eqref{e:chainRuleGPForm} in the chain rule \eqref{joint density DPP}, one after the other. The only thing left to specify is how we sample each conditional. In this paper, we propose to sample each conditional by rejection sampling \citep[Section 2.3]{RoCa04}.} This requires proposal densities $(q_i)_{1\leq
  i\leq N}$ and tight bounds on the density ratios
\eq
\label{e:toBound}
\frac{\Big\Vert P_{H_{i-1}} K_N(x,\cdot)\Big\Vert_{L^2(\mu)}^2 \omega(x)}
{q_i(x)}, \quad 1\leq i \leq N,
\qe
when $\mu(\d x)=\omega(x)\d x$. \rev{A theorem of Totik, which we recall later as Theorem~\ref{Totik asymp unif}, gives light conditions on $\omega$, under which}
\rev{
$$
\frac{N}{K_N(x,x)}\rightarrow \frac{\omega(x)}{\omega_{\text{eq}}(x)},
$$
}
\rev{
uniformly on $I_\epsilon^d$. This suggests choosing
$$q_i(x) = q(x) = \omega_{eq}^{\otimes d}(x) = \prod_{j=1}^d \frac{1}{\pi\sqrt{1-x_j^2}}\IND_{[-1,1]}(x_j), \quad
1\leq i\leq N.$$
}
To bound \eqref{e:toBound}, it is enough to bound
$K_N(x,x)\omega(x)/\omega_{eq}^{\otimes d}(x)$ since $K_N$ is a positive definite
kernel. Obtaining tight bounds is
problem-dependent. Interestingly, for Jacobi measures, these bounds have been an active
topic of research and we can use e.g. the bounds in \citep{Gau09} for our
rejection sampling. This means that in practice, we can apply our method in the
classical cases where Gaussian quadrature is applied.

We now discuss the cost of sampling a multivariate OP Ensemble. Without taking into
account the evaluation of orthogonal polynomials nor rejection sampling\footnote{The cost of the rejection steps, in particular, depends on the tightness of the bound of \eqref{e:toBound} and would need further study.}, the
number of basic operations is as much as for Gram-Schmidt orthogonalization of
$N$ vectors of dimension $N$, that is of order $N^3$ \cite[Section 5.2]{GoVa12}.
This means that Monte Carlo with DPPs is to be used when the gain in accuracy in
Theorem~\ref{DPPMC Th2} is worth spending a cubic computational
 budget to obtain the quadrature nodes. Such settings arise in Bayesian
 inference with expensive models in the natural sciences, where evaluating the
 integrand once can easily be a question of hours or more. Those are the same
 application areas as discussed for Bayesian quadrature in Section~\ref{s:bq}.

\rev{Additionally, we note that our central limit Theorems~\ref{DPPMC Th1}, \ref{DPPMC Th2} and \ref{DPPMC Th3} are independent of the algorithm we use to sample the multivariate OP Ensemble. Should a faster algorithm come out, this would further augment the applicability of Monte Carlo with DPPs.
 Fast sampling algorithms are out of the scope of this paper, but there are
 reasons to think they do exist. First, when $d=1$ and the reference measure is Jacobi \eqref{Jacobi measure}, sampling the OP Ensemble can already be done rejection-free and in time $\mathcal{O}(N^2)$ by diagonalizing a tridiagonal random matrix that only requires sampling independent beta variables \citep{KiNe04}.} Second, some discrete examples
 of DPPs can also be sampled in time $\cO(N\log N)$ \cite[Chapter 4]{LyPe16}.
 Third, since Monte Carlo with DPPs closely connects with methods such as
 QMC (Section~\ref{s:qmc}) and Bayesian quadrature (Section~\ref{s:bq}),
 inspiration could be drawn from fast methods that exist for these families of
 algorithms \citep{DiKuSl13,BaLaOb12,BOGOS15}.

\section{Experimental illustration}
\label{s:experiments}
\rev{In this section, we illustrate Theorems~\ref{DPPMC Th1} and \ref{DPPMC Th2} with three toy experiments}. In particular, for both CLTs, we investigate how
fast the Gaussian limit appears in each theorem and we estimate the rate of decay of the variance.

\subsection{The common setting}
\label{s:commonSetting}
We consider OP Ensembles with reference measure the product Jacobi measure
\eqref{Jacobi measure} with $\alpha_1=\beta_1=-1/2$, and $\alpha_j,\beta_j$
drawn i.i.d. uniformly on $[-1/2,1/2]$ for $1 < j\leq d$. As proposed in
Section~\ref{s:sampling}, we use \rev{$\omega_{eq}^{\otimes d}$} for the density of the proposal in the rejection sampling steps, and the bounds in \citep{Gau09}. For various $N\in [10,400]$ and each dimension $d\in \{1,2,3\}$, we sample $N_{\text{repeat}} = 100$ independent realizations of $\{\bx_1,\dots,\bx_N\}$. We refer to Section~\ref{s:sampling} for details on the sampling algorithm and references to implementations.

Figure~\ref{f:sample} depicts one of the obtained samples when $d=2$ and $N=150$. Each disk is centered at a node $\bv x_i$ in the sample, and the area of the disk is proportional to the weight $1/K_N(\bv x_i,\bv x_i)$. The marginal plots on each axis
depict the marginal histograms of the  weighted sample, with a green curve
indicating the density of the marginal Jacobi measures corresponding to $j=1,2$
in \eqref{Jacobi measure}. Good agreement is observed for the marginals, as
expected from the unbiasedness in \eqref{e:weightedUnbiasedness}.

We now proceed to investigating the Gaussianity and estimating the variance decay of the linear statistics in Theorems~\ref{DPPMC Th1} and \ref{DPPMC Th2}, with three integration tasks. All our confidence intervals include a Bonferroni correction to take into account the fact that these three experiments share the same OPE samples.

\subsection{Crude Monte Carlo: illustrating Theorem~\ref{DPPMC Th1}}
\label{s:crudeMCExperiment}

We define a simple ``bump'' test function that is $\mathscr C^\infty$ on
$I^d=[-1,1]^d$ and vanishes outside $I_\epsilon^d=[-1+\epsilon,1-\epsilon]^d$,
\eq
f(x) = \IND_{I_\epsilon^d}(x) \prod_{j=1}^d  \exp\left( - \frac{1}{1-\epsilon-x_j^2}  \right),
\label{e:testFunction}
\qe
so that \rev{$f\in\mathscr C$} and thus satisfies the assumptions of Theorem~\ref{DPPMC Th1}. We set $\epsilon=0.05$ and plot $f$ for $d=2$ in Figure~\ref{f:testFunction}.

We summarize our results for each dimension $d$ in Figure~\ref{f:results}. On each quadrant and for each $N$, we plot in black circles the sample variance of $$ \sum_{i=1}^N \frac{f(\bx_i)}{K_N(\bx_i,\bx_i)},$$ computed over the $N_{\text{repeat}}$ realizations. Blue and red dots indicate standard confidence intervals, for indication only. \rev{For comparison, we also plot in white circles the sample variance of a standard crude Monte Carlo estimator of the same integral, using i.i.d. samples of the reference measure.}

\begin{figure}
\subfigure["Bump" test function]{
  \includegraphics[width=\twofig]{\figdir/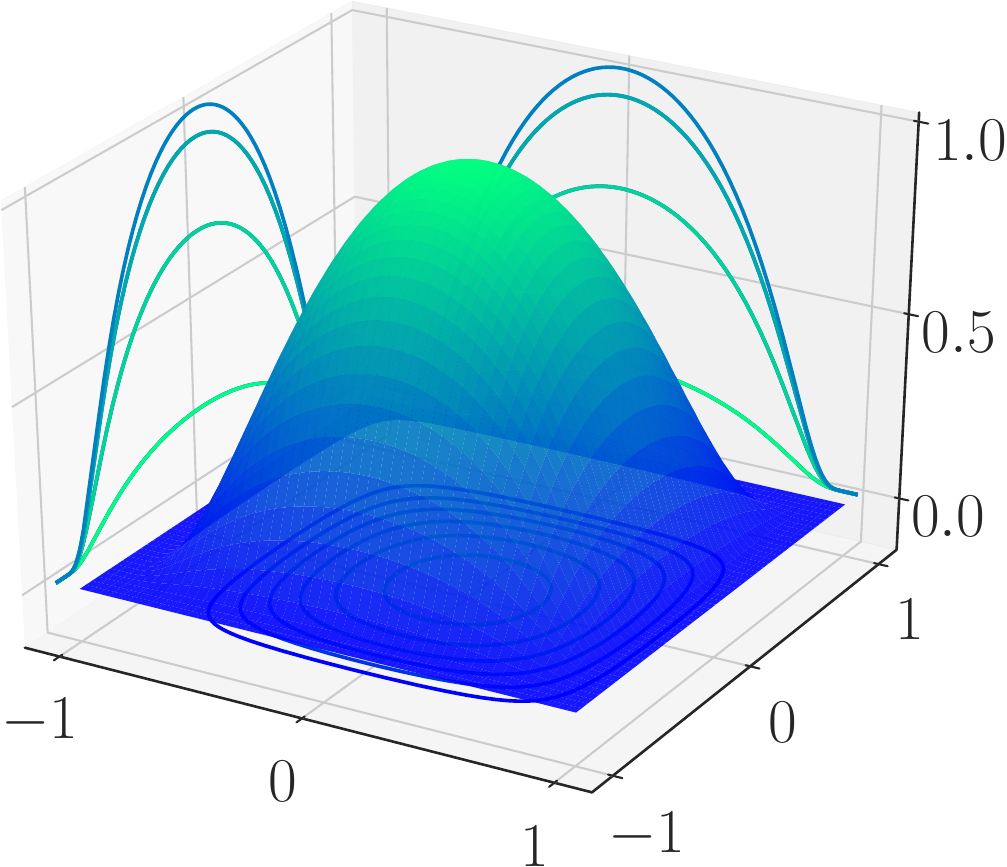}
  \label{f:testFunction}
}
\subfigure[$\Vert\cdot\Vert_1$ test function]{
  \includegraphics[width=\twofig]{\figdir/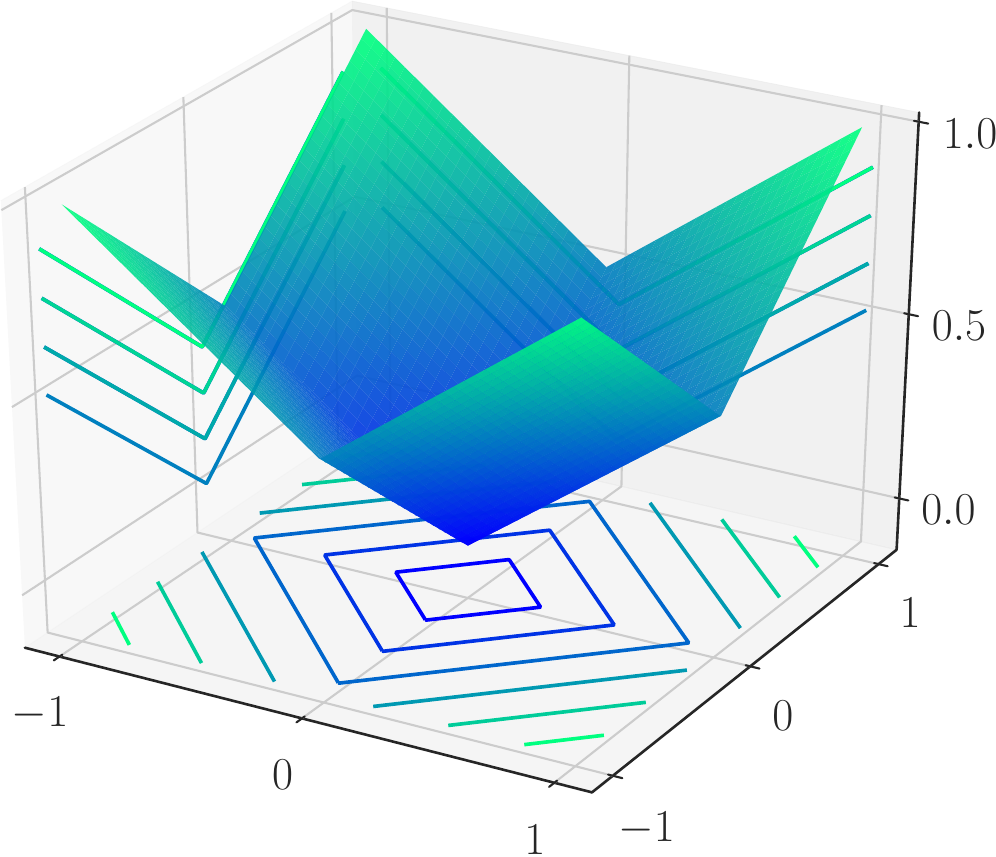}
  \label{f:testFunctionabs}
}
\caption{The two test functions}
\label{f:testFunctions}
\end{figure}

\begin{figure}
\subfigure[2D sample]{
\includegraphics[width=\twofig]{\figdir/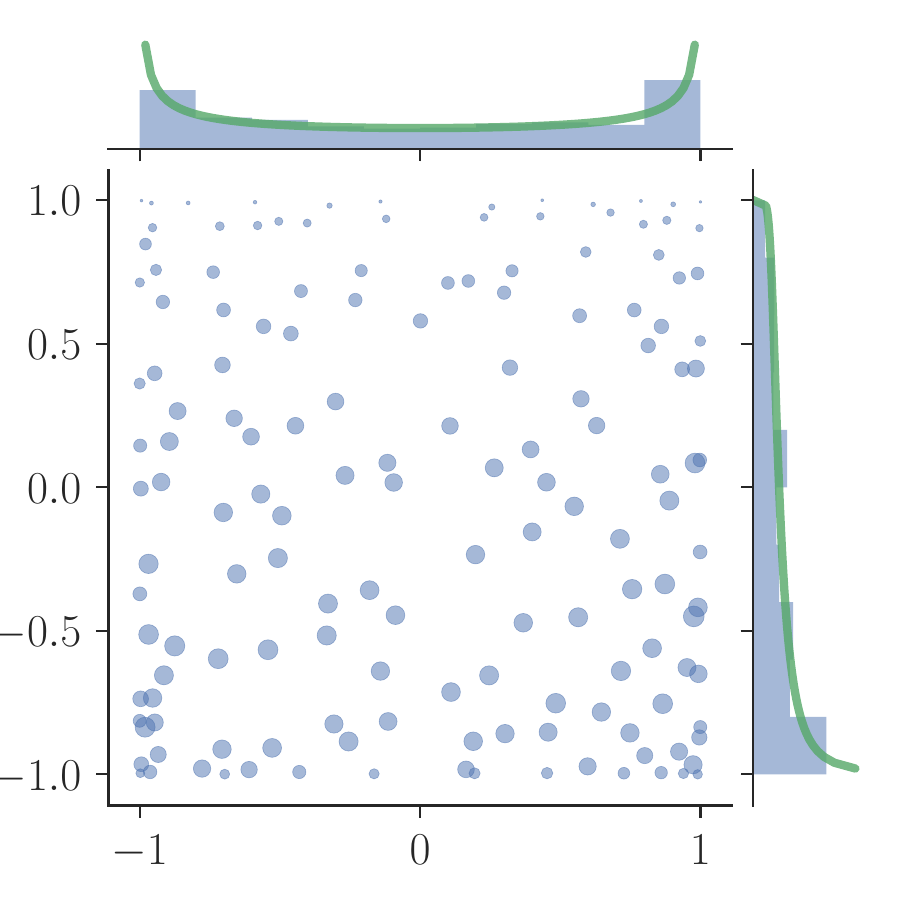}
\label{f:sample}
}
\subfigure[importance sampling setting]{
 \includegraphics[width=\twofig]{\figdir/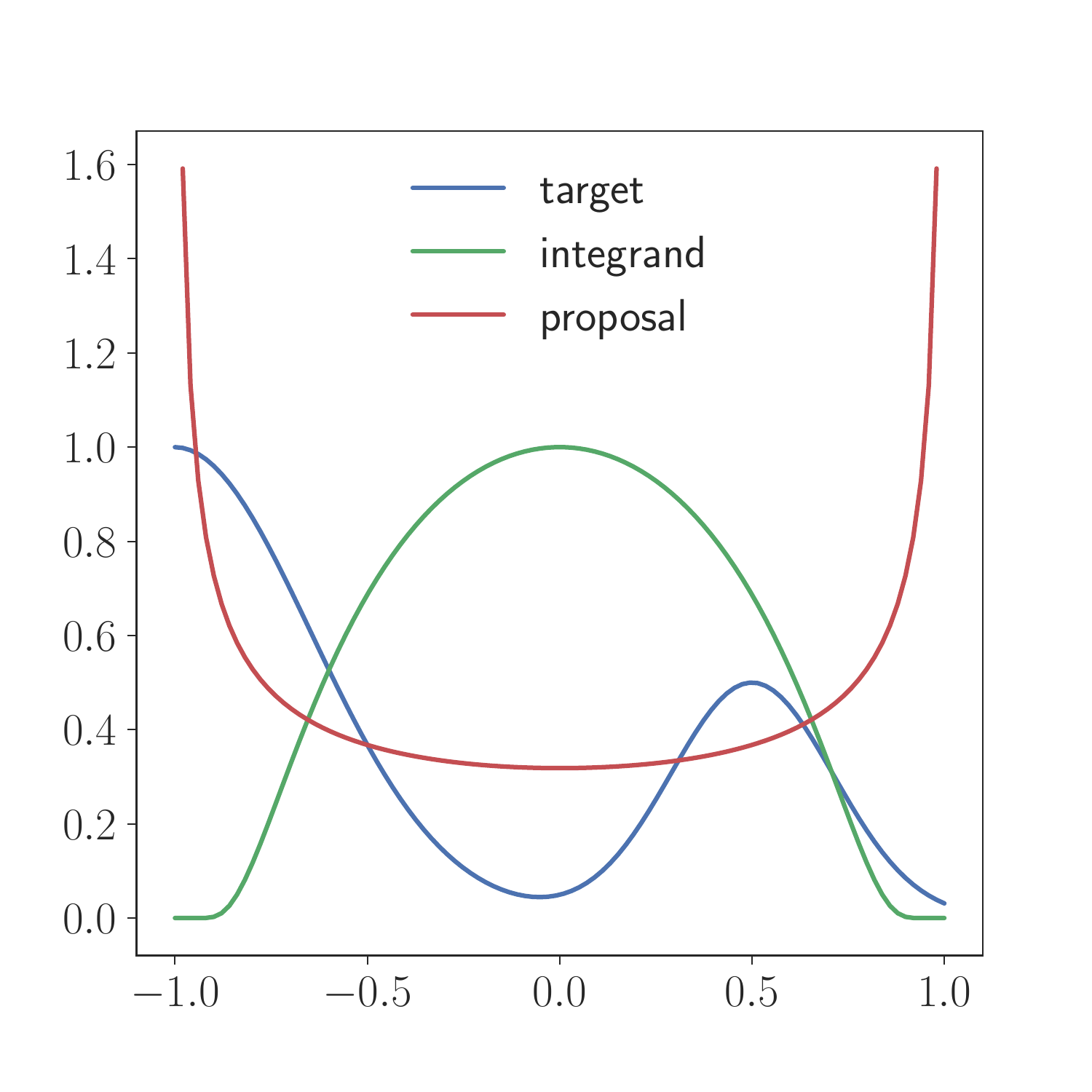}
 \label{f:importanceSamplingSetting}
 }
\caption{\ref{f:sample} A weighted sample of the Jacobi OPE described in Section~\ref{s:commonSetting}. \rev{\ref{f:importanceSamplingSetting} The test function \eqref{e:testFunction} when
  $d=1$, along with the proposal and the target used for demonstrating the importance sampling procedure in Theorem~\ref{DPPMC Th2}.}}
\end{figure}

\begin{figure}
\subfigure[$d=1$]{
\includegraphics[width=\twofig]{\figdir/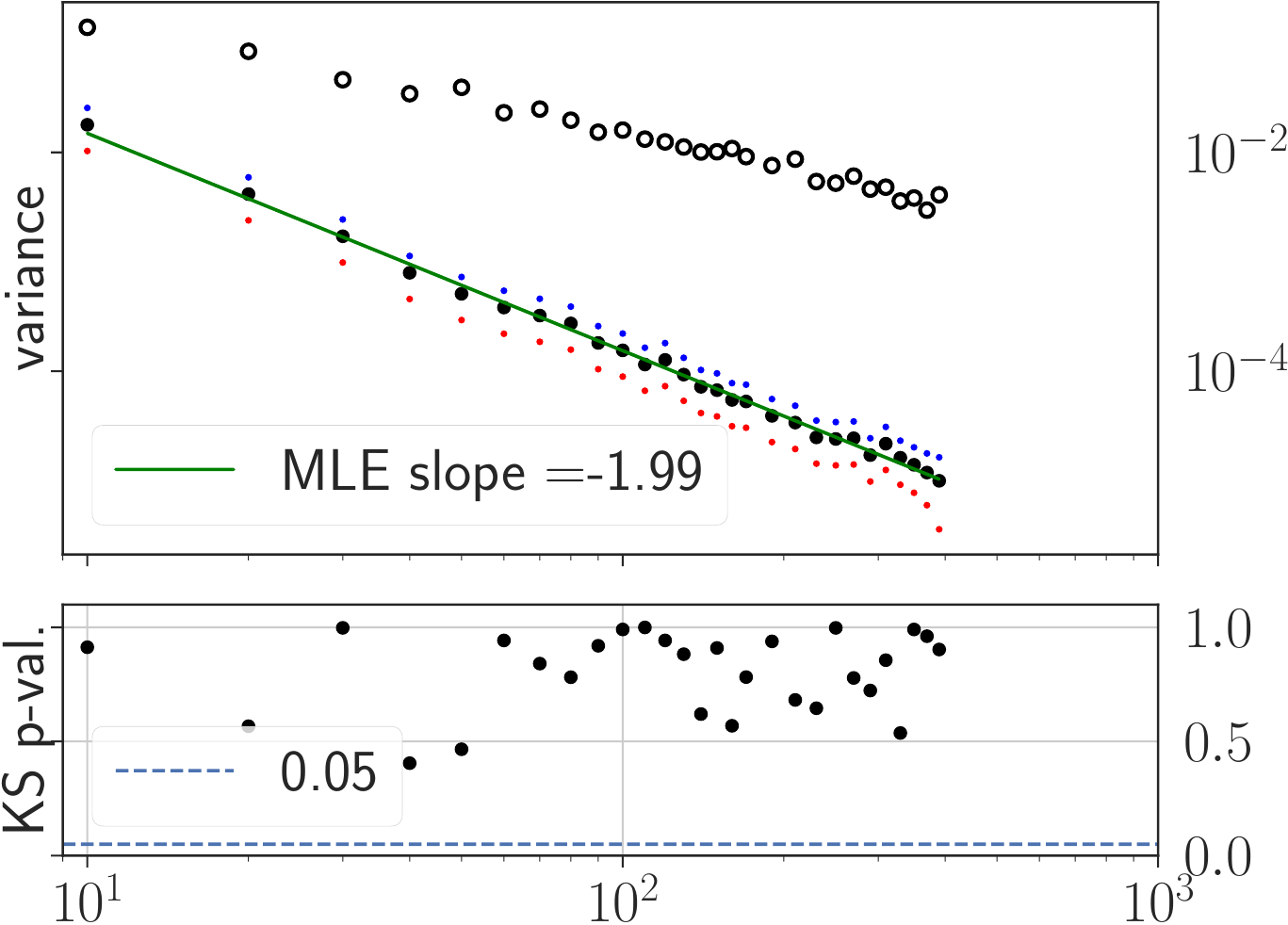}
\label{f:results1D}
}
\subfigure[$d=2$]{
\includegraphics[width=\twofig]{\figdir/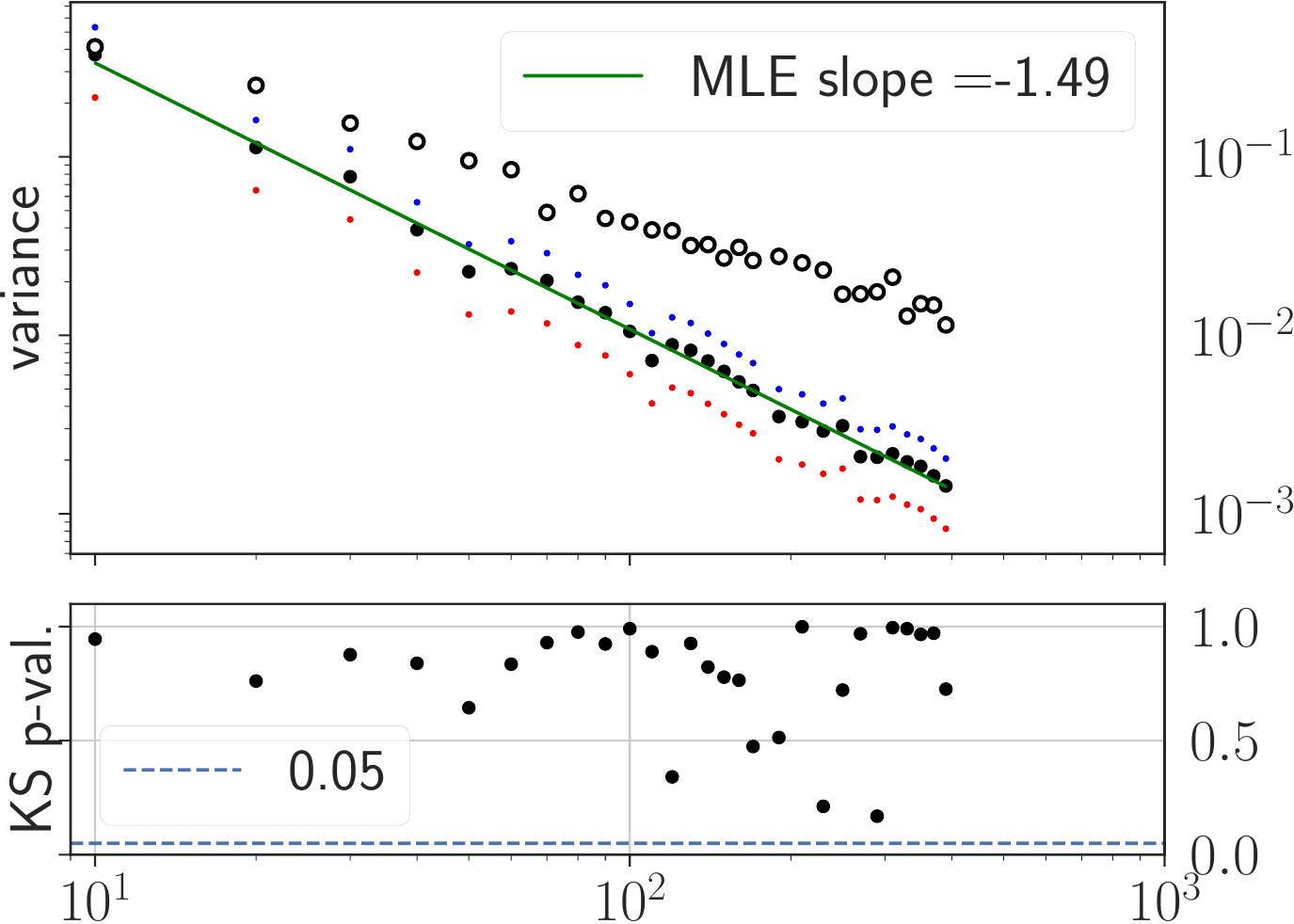}
\label{f:results2D}
}\\
\centering
\subfigure[$d=3$]{
\includegraphics[width=\twofig]{\figdir/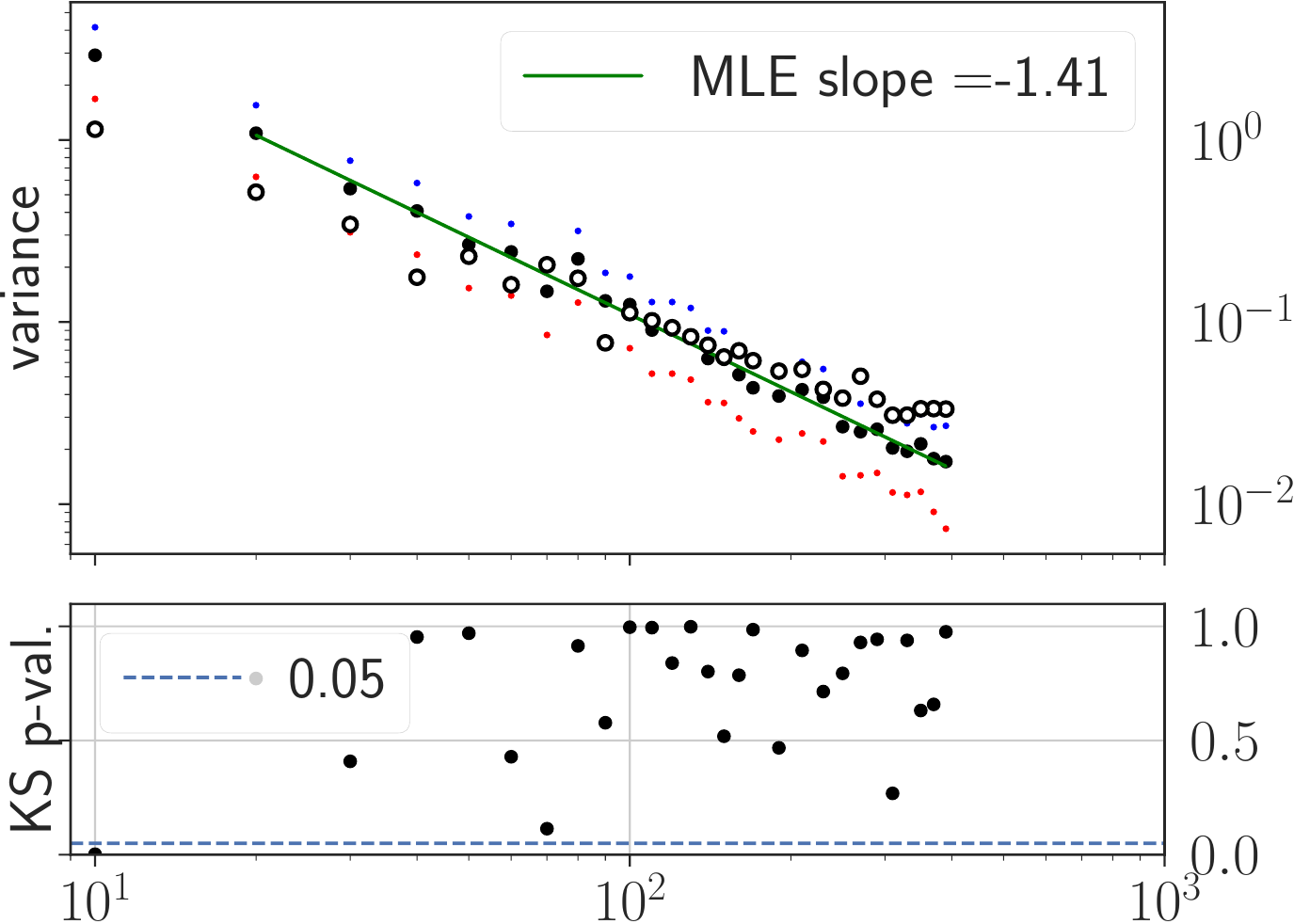}
\label{f:results3D}
}
\caption{Summary of the crude Monte Carlo results.}
\label{f:results}
\end{figure}

For a given dimension $d$, we want to infer the rate of decay of the
variance, in order to confirm the rate in the CLT of Theorem~\ref{DPPMC Th1}.
We proceed as follows. We first select the values of $N$ for which the
$N_{\text{repeat}}$ realizations give a $p$-value larger than $0.05$ in a
Kolmogorov-Smirnov test of Gaussianity. This is meant to eliminate the small values of $N$ for which the Gaussian in the CLT \eqref{e:weightedCLT} is a bad approximation for
our samples. We do not claim to perform any multiple testing, but rather use the
$p$-value as a loose indicator of Gaussianity. The bottom plot of each quadrant of
Figure~\ref{f:results} shows the $p$-values as a function of $N$. Note how
Gaussianity is hinted even for small $N$ in $d=1,2$, while for $d=3$, it takes larger $N$ to kick in. \rev{Then, we perform a standard frequentist linear regression} on the selected log variances vs. $\log(N)$. For visualization, we plot on each quadrant of Figure~\ref{f:results} the maximum likelihood (MLE) line in green and indicate its slope in the legend. The usual Student-t confidence intervals for the slope are given in the column of Table~\ref{t:confidenceIntervals} labeled \emph{Crude MC}.

\begin{table}[!h]
\centering
\begin{tabular}{|c|c|c|c|c|}
\hline
$d$ & $-1-1/d$ & Crude MC & Importance sampling & Assumption violation \\
\hline
\hline
1 & $-2$ & $[-2.05, -1.92]$ & $[-2.13, -1.99]$ & $[-2.08, -1.92]$\\
2 & $-1.5$ & $[-1.55, -1.43]$ & $[-1.61, -1.44]$ & $[-1.60, -1.42]$\\
3 & $-1.33$ & $[-1.49, -1.33]$ & $[-1.42, -1.16]$ & $\bf{[-1.24, -1.07]}$\\
\hline
\end{tabular}
\caption{\rev{Confidence intervals for the variance decay, for all three experiments of Section~\ref{s:experiments}. In bold, we highlight the only confidence interval that does not contain the corresponding theoretical rate given in the second column.}}
\label{t:confidenceIntervals}
\end{table}


The confidence intervals are in very good agreement with Theorem~\ref{DPPMC Th1} for each dimension $d$. The combined plots in Figure~\ref{f:results} hint that the CLT
approximation is strikingly accurate for all $d$, even for small $N$. For $d=3$, the Gaussianity appears slightly later in terms of $N$, which confirms the intuition that the convergence to a Gaussian is slower when the dimension increases. \rev{Relatedly, the intercept of the various straight lines increases with $d$, and this increase seems to be faster for DPPs than crude Monte Carlo. This entails that the value of $N$ above which OPEs becomes significantly more efficient than crude Monte Carlo increases with $d$, as can be seen in Figure~\ref{f:results}.}

\subsection{Importance sampling: illustrating Theorem~\ref{DPPMC Th2}}
\rev{
We now illustrate the importance sampling result in Theorem~\ref{DPPMC Th2}. As proposal reference measure, we use the OPEs described in Section~\ref{s:commonSetting}. More precisely, we take $q(x)\d x$ in Theorem~\ref{DPPMC Th2} to be the product Jacobi measure described in Section~\ref{s:commonSetting}. The goal is still to estimate the integral of $\phi$ in \eqref{e:testFunction}, but this time with respect to a target distribution $\mu$ that is a truncated mixture of two Gaussians, with density
}
\rev{
$$ \omega(x) = \frac12 \exp\left[-\frac{\Vert x-0.5\Vert^2}{(0.3)^2}\right] + \exp\left[-\frac{\Vert x+1\Vert^2}{(0.5)^2}\right]$$
}
\rev{with respect to the Lebesgue measure on $I^d$.
The target measure $\mu$, the proposal reference measure and the test function are illustrated in $d=1$ in Figure~\ref{f:importanceSamplingSetting}.
}

\rev{
For the sake of shortness, we defer the figure displaying the results of the regression to Figure~\ref{f:resultsIS} in Appendix~\ref{a:figures}. We only report here the resulting confidence intervals on the rate of decay of the variance, in the column of Table~\ref{t:confidenceIntervals} labeled \emph{importance sampling}. Again, the confidence intervals are in very good agreement with the CLT in Theorem~\ref{DPPMC Th2}.
}

\subsection{An integrand that violates the assumptions of Theorem~\ref{DPPMC Th1}}
\label{s:ass}
\rev{We again copy the setting of Section~\ref{s:commonSetting}, but we change the integrand to
$$ f(x) = d^{-1}\Vert x\Vert_1.$$
This test function is plotted in Figure~\ref{f:testFunctionabs} for $d=2$. It does not belong to the class $\mathscr{C}$ that we authorize in Theorem~\ref{DPPMC Th1}: it is not $\mathscr C^1$, and it does not vanish at the border of $I^d$.
}
\rev{
Again, we defer the display of the regression to Figure~\ref{f:resultsAss} in Appendix~\ref{a:figures}, and we limit ourselves here to the confidence intervals on the slope, which are given in the last column of Table~\ref{t:confidenceIntervals}. This time, while the decay of the mean square error is still significantly better than crude Monte Carlo, the confidence intervals do not all match the conclusion of Theorem~\ref{DPPMC Th1}: we have written in bold the confidence interval for $d=3$, which suggests a rate of decay that is slower than $-1-1/d$. This is a hint that the assumptions of Theorem~\ref{DPPMC Th1} are essentially tight.
}

\section{Discussion and perspectives}
\label{s:discussion}
As detailed in Remark~\ref{r:gauss}, Monte Carlo with DPPs is a stochastic
counterpart to Gaussian quadrature, introduced in Section~\ref{s:quadrature}.
Compared to the Monte Carlo methods introduced in Section~\ref{s:monteCarlo}, and
\ref{s:qmc}, Theorem \ref{DPPMC Th2} is an importance sampling procedure, with negatively
correlated importance samples. This negative correlation results in a variance
reduction that impacts the decay rate of the variance. Loosely speaking, this is
reminiscent of the surprising kernel density approach to importance sampling of
\cite{DePo16} described in Section~\ref{s:monteCarlo}. Our rates are better for
equivalent smoothness in $d=1$, but for $d>1$, the theoretical comparison is less clear. In terms of sampling cost, \citep{DePo16} scales as $\cO(N^2)$ not taking into account the tuning of the kernel parameters. Naively sampling orthogonal polynomial ensembles is $\cO(N^3)$, without taking rejection sampling into account. Tackling the cubic cost of sampling DPPs is a natural sequel to our work, see also Section~\ref{s:sampling}.


Monte Carlo with DPPs is also reminiscent of randomized quasi-Monte
Carlo methods such as scrambled nets \citep{Owe97}, discussed in Section~\ref{s:qmc}. The important difference is that randomness and discrepancy are tied in our DPP proposal. The similarities with QMC are an interesting lead for future research. In particular, fast constructions of nets in QMC \citep{DiKuSl13} could yield fast sampling algorithms for DPPs.

Monte Carlo with DPPs also connects with Bayesian quadrature, introduced in
Section~\ref{s:bq}. As pointed out in Section~\ref{s:sampling}, sampling
projection DPPs is related to sequentially maximizing the variance of a Gaussian process,
while Bayesian quadrature is about sequentially minimizing the variance of the
integral of $f$ when a Gaussian process prior is assumed on $f$, see
Section~\ref{s:bq}. A formal connection with Bayesian quadrature would
facilitate the transfer of CLTs such as our Theorem~\ref{DPPMC Th2} to
Bayesian quadrature. Conversely, the efficient Frank-Wolfe optimization procedures
given for herding by \cite{BaLaOb12,BOGOS15} could influence fast sampling algorithms for DPPs.

Additionally, theoretical rates have been provided for hybrid integral
estimators in \cite[Theorem 1]{BOGO15}, which mix classical Monte Carlo nodes
with Bayesian quadrature weights, effectively reducing the influence of close-by
pairs of Monte Carlo samples. Together with \citep{DePo16}, the latter approach
uses a kernel to introduce anti-correlation in a \emph{postprocessing} reweighting
step. In comparison, Monte Carlo with DPPs also uses a kernel to encode repulsiveness, but the repulsiveness not only appears in the weights: it is embedded in the sampling procedure.

\rev{The same reason that sampling and approximation are both encapsulated in the same DPP object is the major difference with the postprocessing approaches of \citep{OaGiCh17,LiLe17}. With DPPs, we lose the algorithmic simplicity and often low cost of postprocessing, but we gain a CLT with weaker, \emph{dimension-independent} smoothness requirements. Also, our importance reweighted estimator in Theorem~\ref{DPPMC Th2} bypasses the need to know the moments of the target measure, which is akin to removing the constraint of knowing the integral of the control approximation to the integrand in \citep{OaGiCh17}. There are interesting avenues to try to obtain a hybrid algorithm that would get the best of both worlds.}

\rev{Finally, we comment on our focus on orthogonal polynomials and projection kernels.} Relying on orthogonal polynomials made available technology that we extensively used in the proofs, such as precise asymptotic results and the use of recurrence coefficients. First, with slightly stronger estimates, one may allow the reference measure to depend on $N$, so as to replace the equilibrium measure $\mu_{eq}^{\otimes d}$ by a measure putting less mass on the boundary of the integration domain. This would prevent part of the quadrature nodes to clutter close to the boundary of the hypercube. Second, any projection kernel onto an $N$-dimensional subspace of $L^2(\mu)$ yields an appropriate DPP for numerical integration, and orthogonal polynomials may not be the most natural choice of basis for a given integrand. It is easy to imagine kernels built on wavelets or other bases of $L^2(\mu)$, and a clever choice of basis may also yield faster sampling algorithms, but the difficulty lies in obtaining such variance estimates as we obtained for orthogonal polynomials.
\rev{Third, one should keep in mind that not every projection kernel leads to small variance. For instance, the projection kernel onto $$\mathrm{Span}(e^{2ik\pi\theta}:\; k=0,\ldots,N-1)$$ yields a DPP on $[-1,1]$ such that $\Var[\sum f(\bv x_i)] \sim N$  for any odd function of $L^2(-1,1)$.
On the positive side, extra complex structure can buy us further variance reduction in even dimensions for analytic test functions. For instance, one can show along the lines of Section~\ref{section Cov Cheby}, that the projection onto $\mathrm{Span}(z^k:\; k=0,\ldots,N-1)$ yields a DPP on the unit disc with $\Var[\sum f(\bv x_i)] \sim 1$ instead of $N^{1-1/2}$ provided the function $f$ is analytic. Fourth, one may be tempted to use contraction kernels instead of projection kernels, but besides the problem that the number of points drawn from the DPP becomes random, contraction DPPs are bound to augment the variance of linear statistics.}

To conclude, DPPs are a new way to connect numerical integration with rich analytic tools.




\paragraph{Acknowledgments:} AH would like to thank Bernhard
Beckermann, Jeff Geronimo and  Kurt Johansson for useful and stimulating discussions. This work
started while RB and AH were respectively at University of Oxford and KTH Royal Institute of Technology,
funded by EPSRC grant number EP/I017909/1 and grant KAW 2010.0063
from the Knut and Alice Wallenberg Foundation. RB also acknowledges support from
ANR grant \textsc{Bnpsi} ANR-13-BS03-0006, and AH acknowledges support from Labex CEMPI ANR-11-LABX-0007-01. Finally, both authors acknowledge support
from CNRS through PEPS JCJC \textsc{DppMc} and from ANR through grant
\textsc{BoB} ANR-16-CE23-0003.

\appendix

\newpage

\begin{center}
\Huge{Appendix}
\end{center}

\section{Preliminary material}
\label{s:outlineOfProofs}

In this section, we provide some general background on orthogonal polynomials, \rev{we prove short results and we outline the proofs of the main theorems.}

\subsection{Orthogonal polynomials and the Nevai class}
\label{background sec}
In the following, we use the equilibrium measure $\mu_{eq}$ of $I$, defined by
\eq
\label{mueq}
\mu_{eq}(\d x)=\omega_{eq}(x)\d x,\qquad \omega_{eq}(x)=\frac{1}{\pi\sqrt{1-x^2}}\ \bs 1_{I}(x).
\qe
The name comes from its characterization as the unique minimizer of the logarithmic energy $\iint
\log|x-y|^{-1}\mu(\d x)\mu(\d y)$ over Borel probability measures $\mu$ on $I$ \citep{SaTo97}. It is also the image of the uniform measure on the unit circle through the map $e^{i\theta}\mapsto x=\cos\theta$. The associated orthonormal polynomials are the normalized Chebyshev polynomials of the first kind, defined on $I$ by
\eq
\label{Cheby cos}
T_k(\cos\theta)=
\begin{cases}
\sqrt 2 \cos(k\theta) & \mbox{if }k\geq 1\\
1 & \mbox{if }k=0
\end{cases}\;,\qquad \theta \in [0,\pi].
\qe
They satisfy the three-term recurrence relation
\eq
\label{Cheby 3-rec}
x T_k(x)= a_k^* T_{k+1}(x) + b_k^* T_k(x) + a_{k-1}^*T_{k-1}(x),\qquad k\in\N,
\qe
where
\eq
\label{rec coef Cheby}
a_k^*=
\begin{cases}
0 & \mbox{if } k=-1\\
1/\sqrt 2& \mbox{if } k=0\\
1/2 & \mbox{if } k\geq 1
\end{cases}
\qquad \mbox{and }\qquad b_k^*=0.
\qe

More generally, given a reference measure $\mu$ on $I$ with orthonormal polynomials $(\phi_k)$, we always have the three-term recurrence relation
\eq
\label{3-term rec}
x\phi_k(x)=a_{k}\phi_{k+1}(x)+b_k\phi_k(x)+a_{k-1}\phi_{k-1}(x),\qquad k\in\N,
\qe
where $a_{-1}=0$ and  $a_k>0$ and $b_k\in\R$ for every $k\geq 0$. The existence of the recurrence coefficients $(a_k)_{k\in\N}$ and $(b_k)_{k\in \N}$ follows by decomposing the polynomial $x\phi_k$ into the orthonormal family $(\phi_\ell)_{\ell=0}^{k+1}$ of $L^2(\mu)$ and  observing that $\langle x\phi_k,\phi_\ell\rangle=\langle \phi_k,x\phi_\ell\rangle=0$ as soon as $\ell<k-1$ by  orthogonality.

\begin{definition}
\label{def Nevai class}
A measure $\mu$ supported on $I$ is \emph{Nevai-class} if the  recurrence coefficients for the associated orthonormal polynomials satisfy
\[
\lim_{k\rightarrow\infty}a_k=1/2,\qquad \lim_{k\rightarrow\infty}b_k=0.
\]
\end{definition}

Notice the respective limits of the $a_k$'s and $b_k$'s for Nevai class measures
are the recurrence coefficients \eqref{rec coef Cheby} of the measure $\mu_{eq}$
when $k\geq 1$.

The next theorem gives a sufficient condition for a measure to be Nevai class  \cite[Theorem 1.4.2]{Sim11}.

\begin{theorem}{\bf (Denisov-Rakhmanov)}
\label{DR theorem}
\label{Rakhmanov-Denisov}
Let $\mu$ be a reference measure on $I$ with Lebesgue decomposition $\mu(\d
x)=\omega(x)\d x + \mu_{s}$. If $\omega(x)>0$ almost everywhere, then  $\mu$ is Nevai-class.
\end{theorem}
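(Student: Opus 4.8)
The plan is to reduce the statement on $I=[-1,1]$ to the corresponding statement on the unit circle, where Rakhmanov's theorem on the Szeg\H{o} class applies, and then translate back via the Szeg\H{o} mapping $x=\cos\theta=\tfrac12(z+z^{-1})$, $z=e^{i\theta}$. Concretely, first I would recall that the recurrence coefficients $(a_k,b_k)$ of the orthonormal polynomials for a measure $\mu$ on $I$ are in explicit correspondence with the Verblunsky coefficients $(\alpha_n)_{n\geq 0}$ of an associated measure $\nu$ on the unit circle: if $\mu$ is the image (pushforward) of a measure $\nu$ on $\partial\mathbb{D}$ that is invariant under $z\mapsto\bar z$, then the Geronimus relations express $a_k$ and $b_k$ as rational functions of $\alpha_{2k-2},\alpha_{2k-1},\alpha_{2k}$ (see \citep{Sim11}, Chapter 13); in particular $a_k\to 1/2$ and $b_k\to 0$ is equivalent to $\alpha_n\to 0$.

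Next I would invoke Rakhmanov's theorem on the circle: if $\nu(\d\theta)=w(\theta)\tfrac{\d\theta}{2\pi}+\nu_s$ with $w(\theta)>0$ for a.e.\ $\theta$, then the Verblunsky coefficients satisfy $\alpha_n\to 0$. The one remaining point is to produce, from the given $\mu(\d x)=\omega(x)\d x+\mu_s$ on $I$ with $\omega>0$ a.e., a symmetric measure $\nu$ on $\partial\mathbb{D}$ whose pushforward under $x=\cos\theta$ is $\mu$ and whose absolutely continuous part is positive a.e.: one takes $\nu$ to be the symmetrization of the measure obtained by pulling back $\mu$ through $\theta\mapsto\cos\theta$, and checks that the change of variables $\d x = -\sin\theta\,\d\theta$ turns $\omega(\cos\theta)>0$ a.e.\ into a density on the circle that is positive a.e.\ (the Jacobian $\sin\theta$ vanishes only on a null set). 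Then Rakhmanov on the circle gives $\alpha_n\to 0$, the Geronimus relations give $a_k\to 1/2$, $b_k\to 0$, and $\mu$ is Nevai class by Definition \ref{def Nevai class}.

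I expect the main obstacle — and in fact the part a careful write-up must either prove or cite precisely — to be Rakhmanov's theorem itself on the circle; its proof (via step-by-step sum rules / relative Szeg\H{o} functions, or via the original weak-limit arguments) is genuinely hard and is exactly the content of \citep[Theorem 1.4.2]{Sim11}. Since the excerpt explicitly permits citing earlier-stated results and this is presented as a known theorem with the reference \citep{Sim11}, the honest ``proof'' here is simply: quote \citep[Theorem 1.4.2]{Sim11}, or, if one wants a self-contained reduction, carry out the Szeg\H{o}-mapping bookkeeping above and quote the circle version. The only genuinely checkable step on our side is the a.e.-positivity transfer under the mapping, which is routine measure theory.
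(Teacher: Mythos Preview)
The paper does not supply any proof of this theorem at all: it is stated as a known result with the citation \citep[Theorem 1.4.2]{Sim11} and nothing more. Your proposal is therefore not ``essentially the same'' as the paper's proof, because there is no proof in the paper to compare to.

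That said, your sketch is a correct outline of how the result is actually established in the literature: pull $\mu$ back to a symmetric measure on the circle via the Szeg\H{o} map, invoke Rakhmanov's theorem there to get $\alpha_n\to 0$, and use the Geronimus relations to conclude $a_k\to 1/2$, $b_k\to 0$. You are also right that the hard analytic content sits entirely in the circle version of Rakhmanov's theorem, and that the measure-theoretic transfer of a.e.\ positivity under $x=\cos\theta$ is routine. One small caveat: the name ``Denisov--Rakhmanov'' usually signals the extension where only the \emph{essential} support is assumed to be $[-1,1]$ (equivalently, on the circle, one allows the essential support of $\nu$ to be all of $\partial\mathbb{D}$ with possible extra mass points), which is slightly stronger than the pure Rakhmanov statement; your reduction still works, but you should cite the Denisov version on the circle rather than the original Rakhmanov theorem if you want to match the full strength of the statement as named. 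For the purposes of this paper, though, simply citing \citep[Theorem 1.4.2]{Sim11} is exactly what the authors do and is entirely appropriate.
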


Consider now the Christoffel-Darboux kernel
\eq
\label{CD kernel 2}
K_N(x,y)=\sum_{k=0}^{N-1}\phi_k(x)\phi_k(y),
\qe
and notice $\frac1N K_N(x,x)\mu(\d x)$ is a probability measure. One of the
interesting properties of Nevai-class measures is that this probability measure
has $\mu_{eq}$ for weak limit as $N\to\infty$ \citep{StTo92}.

\begin{theorem}
\label{weak conv eq 1D}
Assume $\mu$ supported on $I$ is Nevai-class. Then, for every $f\in\mathscr C^0(I,\R)$,
$$
\int f(x)  \frac1NK_N(x,x)\mu(\d x) \xrightarrow[N\to\infty]{} \int f(x)\mu_{eq}(\d x).
$$
\end{theorem}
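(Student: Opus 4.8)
The plan is to reduce to polynomial test functions and exploit that $\nu_N(\d x):=\tfrac1N K_N(x,x)\,\mu(\d x)$ is a \emph{probability} measure on the compact interval $I$ — indeed $\int_I\nu_N(\d x)=\tfrac1N\sum_{k=0}^{N-1}\|\phi_k\|_{L^2(\mu)}^2=1$. Since polynomials are dense in $\mathscr C^0(I,\R)$ for the sup norm by the Weierstrass theorem, and each $\nu_N$ has total mass $1$, a standard approximation argument shows it suffices to prove that, for every fixed integer $m\geq0$,
\begin{equation}
\int_I x^m\,\nu_N(\d x)\xrightarrow[N\to\infty]{}\int_I x^m\,\mu_{eq}(\d x).
\label{e:momentConv}
\end{equation}

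I would prove \eqref{e:momentConv} by recasting it in terms of Jacobi matrices. Expanding the kernel gives $\int_I x^m\,\nu_N(\d x)=\tfrac1N\sum_{k=0}^{N-1}\langle x^m\phi_k,\phi_k\rangle_{L^2(\mu)}$. By the three-term recurrence \eqref{3-term rec}, multiplication by $x$ acts on the orthonormal basis $(\phi_k)$ as the bounded self-adjoint tridiagonal (Jacobi) matrix $J$ with $J_{kk}=b_k$ and $J_{k,k+1}=J_{k+1,k}=a_k$, so $\langle x^m\phi_k,\phi_k\rangle=(J^m)_{kk}$. For fixed $m$, this quantity is a fixed polynomial in the finitely many recurrence coefficients localized in a window of width $O(m)$ around $k$ (only closed nearest-neighbour walks of length $m$ from $k$ to $k$ contribute). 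Since $\mu$ is Nevai class, $a_j\to1/2$ and $b_j\to0$, so by continuity $(J^m)_{kk}\to c_m$ as $k\to\infty$, where $c_m$ is the same polynomial expression evaluated at $a_j\equiv1/2,\ b_j\equiv0$; as the Cesàro averages of a convergent sequence share its limit, $\tfrac1N\sum_{k=0}^{N-1}(J^m)_{kk}\to c_m$.

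It remains to identify $c_m$ with $\int_I x^m\mu_{eq}(\d x)$, which I would do by running the same computation for $\mu_{eq}$ itself, writing $K_N^{eq}$ for its Christoffel--Darboux kernel. On one hand, the recurrence coefficients \eqref{rec coef Cheby} of $\mu_{eq}$ are $a_j=1/2$ for $j\geq1$ and $b_j=0$, so the previous paragraph gives $\int_I x^m\,\tfrac1N K_N^{eq}(x,x)\,\mu_{eq}(\d x)\to c_m$. On the other hand, $T_k(\cos\theta)^2=1+\cos(2k\theta)$ for $k\geq1$, whence $\tfrac1N K_N^{eq}(\cos\theta,\cos\theta)=1+\tfrac1N\sum_{k=1}^{N-1}\cos(2k\theta)$; since $\cos^m\theta$ is a trigonometric polynomial of degree $m$, its pairing against $\cos(2k\theta)$ vanishes once $2k>m$, so after the change of variables $x=\cos\theta$ one gets $\int_I x^m\,\tfrac1N K_N^{eq}(x,x)\,\mu_{eq}(\d x)\to\int_0^\pi\cos^m\theta\,\tfrac{\d\theta}{\pi}=\int_I x^m\mu_{eq}(\d x)$. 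Comparing the two limits yields $c_m=\int_I x^m\mu_{eq}(\d x)$, proving \eqref{e:momentConv}.

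The genuinely hard input — that Nevai class measures behave at infinity like the arcsine/Chebyshev case — is exactly what Definition~\ref{def Nevai class} (and, through Theorem~\ref{DR theorem}, ultimately Rakhmanov's theorem) hands us for free; the only step that will need real care is the ``bounded window'' claim, namely that for each fixed $m$ the diagonal entry $(J^m)_{kk}$ depends on only finitely many recurrence coefficients near $k$, continuously, so that $a_k\to1/2,\ b_k\to0$ propagates to $(J^m)_{kk}\to c_m$ and survives the Cesàro averaging. Everything else is bookkeeping; alternatively one may simply cite \citep{StTo92} for \eqref{e:momentConv}, but the argument above is self-contained given the material already developed in the paper.
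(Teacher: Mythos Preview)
Your proof is correct. The paper, however, does not give its own proof of this theorem: it is stated with a citation to \citep{StTo92} (Stahl--Totik) and no argument, so strictly speaking there is no ``paper's proof'' to compare against.

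That said, your self-contained route via the Jacobi-matrix/walk representation of $(J^m)_{kk}$ is very much in the spirit of techniques the paper does develop elsewhere: the identity \eqref{path rep} in Step~2 of the proof of Lemma~\ref{convergence CD out diag}, and its reuse in \eqref{nice path 2}--\eqref{key rec coef conv} in the proof of Proposition~\ref{general cov}, are exactly your ``bounded window'' principle, and your comparison to the Chebyshev reference case parallels how those proofs proceed. What your approach buys is a proof that is self-contained given the material already in Section~\ref{background sec}, at the cost of the short direct computation for $\mu_{eq}$ in your Step~5 (where one should keep in mind that $a_0^*=1/\sqrt2\neq1/2$, but this affects only finitely many terms and is harmless under Ces\`aro averaging); citing \citep{StTo92}, as the paper does and as you yourself note is an option, trades that computation for an external reference.
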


Now, consider instead a reference measure $\mu$ on $I^d$ with associated
multivariate orthogonal polynomials $(\phi_k)_{k\in\mathbb{N}}$ (see Section~\ref{sec intro
  MOPE}) and Christoffel-Darboux kernel $K_N(x,y)$ defined as in \eqref{CD kernel 2}. Assume further that $\mu=\mu_1\otimes\cdots\otimes\mu_d$ is a product of $d$ measures $\mu_j$ on $I$, and denote by $\phi_k^{(j)}$ and $K_N^{(j)}(x,y)$ the respective orthogonal polynomials and  Christoffel-Darboux kernel associated with $\mu_j$. Then, we have
\eq
\phi_k(x)=\phi_{k_1}^{(1)}(x_1)\cdots\phi_{k_d}^{(d)}(x_d)
\label{e:mope}
\qe
where $(k_1,\ldots,k_d)=\frak b(k)$. Moreover,
\eq
\label{CD kernel split}
K_{M^d}(x,y)=\prod_{j=1}^d K_M^{(j)}(x_j,y_j).
\qe
As a consequence, Theorem \ref{weak conv eq 1D} easily yields the following.

\begin{corollary}
\label{weak conv eq}
Let $\mu=\mu_1\otimes\cdots\otimes\mu_d$ with $\mu_j$ supported on $I$ and Nevai-class. Then, for every $f\in\mathscr C^0(I^d,\R)$,
\eq
\label{weak conv eq explicit}
\int f(x)  \frac1NK_N(x,x)\mu(\d x) \xrightarrow[N\to\infty]{} \int f(x)\mu_{eq}^{\otimes d}(\d x).
\qe
\end{corollary}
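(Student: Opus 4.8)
The plan is to prove the statement first along the subsequence $N=M^d$, $M\in\N$, where the product structure is explicit, and then to pass to arbitrary $N$ by a monotonicity argument.

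As a preliminary remark, for every $N$ the measure $\nu_N(\d x):=\frac1N K_N(x,x)\mu(\d x)$ is a probability measure on $I^d$, since $K_N(x,x)=\sum_{k=0}^{N-1}\phi_k(x)^2\geq 0$ and $\int_{I^d}K_N(x,x)\mu(\d x)=\sum_{k=0}^{N-1}\int\phi_k^2\,\d\mu=N$. The assertion \eqref{weak conv eq explicit} is therefore exactly the weak convergence $\nu_N\Rightarrow\mu_{eq}^{\otimes d}$, and since $I^d$ is compact it is enough to check that $\int f\,\d\nu_N\to\int f\,\d\mu_{eq}^{\otimes d}$ for every $f\in\mathscr C^0(I^d,\R)$.

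For $N=M^d$, the factorization \eqref{CD kernel split} together with $\mu=\mu_1\otimes\cdots\otimes\mu_d$ gives $\nu_{M^d}=\bigotimes_{j=1}^d\nu_M^{(j)}$, where $\nu_M^{(j)}(\d x_j):=\frac1M K_M^{(j)}(x_j,x_j)\,\mu_j(\d x_j)$. Each $\mu_j$ being Nevai-class, Theorem~\ref{weak conv eq 1D} yields $\nu_M^{(j)}\Rightarrow\mu_{eq}$ on $I$ for every $j$. Weak convergence of probability measures on a compact space passes to products --- one tests first against functions of the form $f_1(x_1)\cdots f_d(x_d)$, then approximates an arbitrary $f\in\mathscr C^0(I^d)$ uniformly by finite linear combinations of such products via the Stone--Weierstrass theorem --- so $\nu_{M^d}\Rightarrow\mu_{eq}^{\otimes d}$ as $M\to\infty$.

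It remains to lift this to general $N$. Given $N$, put $M=\lfloor N^{1/d}\rfloor$, so that $M^d\leq N<(M+1)^d$ and both $M^d/N$ and $(M+1)^d/N$ tend to $1$ as $N\to\infty$ (equivalently $M\to\infty$). Since $N\mapsto K_N(x,x)$ is nondecreasing pointwise, being a sum of nonnegative terms, for any $f\in\mathscr C^0(I^d,\R)$ with $f\geq 0$ we have
\[
\frac{M^d}{N}\int f\,\d\nu_{M^d}\;\leq\;\int f\,\d\nu_N\;\leq\;\frac{(M+1)^d}{N}\int f\,\d\nu_{(M+1)^d}.
\]
Letting $N\to\infty$, the two outer expressions both converge to $\int f\,\d\mu_{eq}^{\otimes d}$ by the subsequence result, hence so does the middle one; the general signed case follows by applying this to $f+\|f\|_\infty$ and using that $\nu_N$ and $\mu_{eq}^{\otimes d}$ are probability measures. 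The argument is routine; the only point that requires a little care is this final sandwich, where one must exploit the monotonicity of $K_N(x,x)$ together with the fact that consecutive $d$-th powers have ratio tending to $1$ --- so that the indices between $M^d$ and $N$ contribute negligibly --- rather than any new ingredient.
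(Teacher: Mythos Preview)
Your proof is correct and follows essentially the same approach as the paper: reduce to product test functions via Stone--Weierstrass, use the factorization \eqref{CD kernel split} at $N=M^d$ together with Theorem~\ref{weak conv eq 1D}, and sandwich general $N$ between $M^d$ and $(M+1)^d$ using the monotonicity of $K_N(x,x)$. The only cosmetic difference is the order of operations: the paper applies Stone--Weierstrass first and then sandwiches each nonnegative product function directly, whereas you first establish full weak convergence along $N=M^d$ and then sandwich, handling the sign via $f+\|f\|_\infty$.
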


\begin{proof} By the Stone-Weierstrass theorem, it is enough to show \eqref{weak
    conv eq explicit} when $f(x)= \prod_{j=1}^d f_j(x_j)$ with $f_j\in \mathscr C^0(I,\R)$. Without loss of generality, one can further assume the functions $f_j$ are non-negative. Let $M=\lfloor N^{1/d}\rfloor$ be the unique integer satisfying $M^d\leq N < (M+1)^d$. Since we have $K_{M^d}(x,x)\leq K_N(x,x)\leq K_{(M+1)^d}(x,x)$ and, by \eqref{CD kernel split},
\begin{multline}
\frac{M^d}{N}\prod_{j=1}^d\int f_j(x) \frac1{M} K_{M}^{(j)}(x,x)\mu_j(\d x)\\
\leq \int f(x) \frac1N K_N(x,x)\mu(\d x)\leq \frac{(M+1)^d}{N}\prod_{j=1}^d\int f_j(x) \frac1{M+1} K_{M+1}^{(j)}(x,x)\mu_j(\d x),
\end{multline}
Corollary \ref{weak conv eq} follows from Theorem \ref{weak conv eq 1D}.
\end{proof}

The next lemma is yet another aspect of Nevai-class measures that is relevant to our
proofs, and may be of independent interest.
\begin{lemma}
\label{convergence CD out diag}
Assume $\mu$ supported on $I$ is Nevai-class. We have the weak convergence of
\eq
\label{LN d=1}
Q_N(\d x,\d y)=(x-y)^2K_N(x,y)^2\mu(\d x)\mu(\d y)
\qe
towards
\eq
\label{L d=1}
L(\d x,\d y)=\frac 1 2 (1-xy)\mu_{eq}(\d x)\mu_{eq}(\d y).
\qe
\end{lemma}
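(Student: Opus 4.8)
The plan is to combine the Christoffel--Darboux formula with the weak convergence, valid for Nevai-class measures, of the ``diagonal'' and ``subdiagonal'' orthonormal-polynomial measures $\phi_N^2\,\d\mu$ and $\phi_N\phi_{N-1}\,\d\mu$. Recall that \eqref{3-term rec} yields the Christoffel--Darboux identity $(x-y)K_N(x,y)=a_{N-1}\big(\phi_N(x)\phi_{N-1}(y)-\phi_{N-1}(x)\phi_N(y)\big)$, so that
\[
(x-y)^2K_N(x,y)^2=a_{N-1}^2\Big(\phi_N(x)^2\phi_{N-1}(y)^2+\phi_{N-1}(x)^2\phi_N(y)^2-2\,\phi_N(x)\phi_{N-1}(x)\,\phi_N(y)\phi_{N-1}(y)\Big).
\]
Integrating this against $\mu(\d x)\mu(\d y)$ shows that $Q_N$ is a positive measure of total mass $2a_{N-1}^2$, which is bounded in $N$ since $\mu$ is Nevai-class and $L$ has finite mass. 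By the Stone--Weierstrass theorem (the span of the $f(x)g(y)$ being dense in $\mathscr C^0(I^d,\R)$) together with this uniform mass bound, it is therefore enough to prove $\int f(x)g(y)\,Q_N(\d x,\d y)\to\int f(x)g(y)\,L(\d x,\d y)$ for all $f,g\in\mathscr C^0(I,\R)$, and by the displayed identity this reduces to establishing
\[
a_{N-1}\to\tfrac12,\qquad \int f\,\phi_N^2\,\d\mu\to\int f\,\d\mu_{eq},\qquad \int f\,\phi_N\phi_{N-1}\,\d\mu\to\int x f(x)\,\mu_{eq}(\d x)
\]
for every $f\in\mathscr C^0(I,\R)$; the first convergence is the very definition of the Nevai class.

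For the remaining two, note $\big|\int f\,\phi_N^2\,\d\mu\big|\le\|f\|_\infty$ and $\big|\int f\,\phi_N\phi_{N-1}\,\d\mu\big|\le\|f\|_\infty$ (the latter via $2|\phi_N\phi_{N-1}|\le\phi_N^2+\phi_{N-1}^2$), so since polynomials are uniformly dense in $\mathscr C^0(I,\R)$ it suffices to treat $f(x)=x^j$, $j\ge0$. The clean bookkeeping is to set $v_N^{(r,j)}=\int x^j\,\phi_{N+r}\phi_N\,\d\mu$ (with $\phi_{-1}\equiv 0$); then $|v_N^{(r,j)}|\le 1$ by Cauchy--Schwarz, $v_N^{(r,0)}=\delta_{r,0}$, and inserting the expansion $x\phi_N=a_N\phi_{N+1}+b_N\phi_N+a_{N-1}\phi_{N-1}$ from \eqref{3-term rec} gives the recursion $v_N^{(r,j+1)}=a_N v_{N+1}^{(r-1,j)}+b_N v_N^{(r,j)}+a_{N-1}v_{N-1}^{(r+1,j)}$. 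Fixing $r$ and $j$ and letting $N\to\infty$, using $a_k\to 1/2$ and $b_k\to 0$, a straightforward induction on $j$ (simultaneous over all $r\in\Z$) gives $v_N^{(r,j)}\to M_{r,j}$, where $M_{r,0}=\delta_{r,0}$ and $M_{r,j+1}=\tfrac12(M_{r-1,j}+M_{r+1,j})$; one recognizes $M_{r,j}=\frac1{2\pi}\int_{-\pi}^{\pi}e^{ir\theta}(\cos\theta)^j\,\d\theta$. Since $\mu_{eq}$ is the pushforward of the uniform measure on the circle under $\theta\mapsto\cos\theta$, we obtain $M_{0,j}=\int x^j\,\d\mu_{eq}$ and $M_{\pm 1,j}=\int x^{j+1}\,\d\mu_{eq}$ (the $\sin$ part integrating to zero), which, applied with $v_N^{(0,j)}$ and $v_N^{(-1,j)}$, is exactly the desired pair of convergences.

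Assembling, for $f,g\in\mathscr C^0(I,\R)$ the displayed expansion of $(x-y)^2K_N^2$ gives
\[
\int f(x)g(y)\,Q_N(\d x,\d y)\longrightarrow\frac14\Big(2\int f\,\d\mu_{eq}\int g\,\d\mu_{eq}-2\int x f(x)\,\mu_{eq}(\d x)\int y g(y)\,\mu_{eq}(\d y)\Big),
\]
which is precisely $\int f(x)g(y)\,\tfrac12(1-xy)\,\mu_{eq}(\d x)\mu_{eq}(\d y)=\int f(x)g(y)\,L(\d x,\d y)$, proving $Q_N\to L$ weakly.

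The only genuinely technical point is the induction of the second paragraph, and even there the sole subtlety is to track the window of indices $\{\,N+r : |r|\le j\,\}$ on which the recurrence coefficients are by then close to the Chebyshev values $(1/2,0)$; there is no real analytic difficulty, and the boundedness $|v_N^{(r,j)}|\le1$ takes care of the term carrying $b_N\to0$. (Alternatively, the weak convergences $\phi_N^2\,\d\mu\to\mu_{eq}$ and $\phi_N\phi_{N-1}\,\d\mu\to x\,\mu_{eq}(\d x)$ for Nevai-class measures belong to the standard orthogonal-polynomial toolbox and could simply be quoted, collapsing the argument to the first and third paragraphs.)
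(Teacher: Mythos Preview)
Your proof is correct and follows the same essential strategy as the paper: both start from the Christoffel--Darboux formula to write $Q_N$ in terms of $\phi_N^2\,\d\mu$ and $\phi_N\phi_{N-1}\,\d\mu$, reduce to polynomial test functions, and exploit the Nevai-class convergence of the recurrence coefficients to control the moments $\int x^j\phi_{N+r}\phi_N\,\d\mu$.

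The organizational difference is that the paper proceeds in two steps: it first establishes the result for $\mu=\mu_{eq}$ via an explicit Fourier-transform computation in the angle variable, and then, for a general Nevai-class $\mu$, compares the moments $\int x^m\phi_N^2\,\d\mu$ and $\int x^m\phi_N\phi_{N-1}\,\d\mu$ with their Chebyshev analogues by means of a lattice-path representation of $\int x^m\phi_k\phi_\ell\,\d\mu$. Your recursion $v_N^{(r,j+1)}=a_N v_{N+1}^{(r-1,j)}+b_N v_N^{(r,j)}+a_{N-1}v_{N-1}^{(r+1,j)}$ is precisely the one-step version of that path expansion, and your identification of the limits $M_{r,j}$ with the Fourier coefficients $\frac1{2\pi}\int e^{ir\theta}(\cos\theta)^j\,\d\theta$ collapses the paper's Step~1 into the same computation. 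What your presentation buys is brevity: no separate Chebyshev calculation is needed, since the limiting recursion already encodes the equilibrium measure. What the paper's two-step organization buys is a slightly more transparent separation between the ``universal'' limit and the comparison argument, and the path representation it introduces is reused later in the paper (Proposition~\ref{general cov}).
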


\begin{proof} First, the Christoffel-Darboux formula reads
\eq
\label{LN CD}
(x-y)^2K_N(x,y)^2= a_N^2\big(\phi_N(x)\phi_{N-1}(y)-\phi_{N-1}(x)\phi_N(y)\big)^2,
\qe
\rev{which follows by computing $xK_N(x,y)$ using the recurrence relation \eqref{3-term rec} and witnessing several cancellations when subtracting $yK_N(x,y)$}.
Thus, by the orthonormality conditions, we see $\iint Q_N(\d x,\d y)= 2a_N^2$. Since $\mu$ is Nevai-class, the former converges to $1/2=\iint L(\d x,\d y)$. This allows us to use the usual weak topology (i.e. the topology coming by duality with respect to the continuous functions) for bounded Borel measures. \\

\noindent \textbf{Step 1.} We first prove the lemma when $\mu=\mu_{eq}$, so that the $\phi_k$'s are the Chebyshev polynomials $T_k$, see \eqref{Cheby cos}. By \eqref{LN CD}, the push-forward of \eqref{LN d=1} by the map $(x,y)\mapsto(\cos\theta,\cos\eta)$, where $\theta,\eta\in[0,\pi]$, reads
\eq
\label{transformed LN}
\frac1{\pi^2} \big\{\cos(N\theta)\cos((N-1)\eta)- \cos((N-1)\theta\cos(N\eta)\big\}^2\d\theta\d\eta.
\qe
This measure  has for Fourier transform
\begin{multline*}
\frac1{\pi^2} \int_0^\pi\int_0^\pi e^{i(\theta u+ \eta v)}\big\{\cos( N\theta)\cos((N-1)\eta)- \cos((N-1)\theta)\cos(N\eta)\big\}^2\d\theta\d\eta\\
= \frac1{\pi^2} \int_0^\pi\int_0^\pi\cos(\theta u+ \eta v)\big\{\cos( N\theta)\cos((N-1)\eta)- \cos((N-1)\theta)\cos(N\eta)\big\}^2\d\theta\d\eta.
\end{multline*}
By developing the square in the integrand and linearizing the products of
cosines, we see that the non-vanishing contribution  as $N\to\infty$ of the
Fourier transform are the terms which are independent on $N$. Indeed, the $N$-dependent terms come up with a factor $1/N$ after integration. Thus, the Fourier transform equals, up to $\cO(1/N)$, to
$$
\frac1{2\pi^2} \int_0^\pi\int_0^\pi\cos(\theta u+ \eta v)\big(1-\cos\theta\cos\eta\big)\d\theta\d\eta.
$$
This yields the weak convergence of \eqref{transformed LN} towards $(2\pi^2)^{-1}(1-\cos\theta\cos\eta)\d\theta\d\eta$, and the lemma follows, in the  case where  $\mu=\mu_{eq}$, by taking the image of the measures by the inverse map $(\cos\theta,\cos\eta)\mapsto(x,y)$.\\

\noindent \textbf{Step 2.}  We now prove the lemma for a general Nevai-class measure $\mu$ on $I$. Let us denote by $Q_N^\mu$ the measure \eqref{LN d=1} in order  to stress the dependence on $\mu$.  Thanks to Step 1, it is enough to prove that for every $m,n\in\N$, we have
\[
\lim_{N\to\infty}\left|\iint x^m y^n Q_N^\mu(\d x,\d y)-\iint x^m y^n Q_N^{\mu_{eq}}(\d x,\d y)\right|=0,
\]
in order to complete the proof of the lemma.  Recalling \eqref{LN d=1}, \eqref{LN CD}, and that  $a_N\to 1/2$, it is enough to show that for every $m\in\N$,
\eq
\label{moment 1}
\lim_{N\to\infty}\left|\int x^m \phi^2_N(x)\mu(\d x) -\int x^m T^2_N(x)\mu_{eq}(\d x)\right| =0
\qe
and
\eq
\label{moment 2}
\lim_{N\to\infty}\left|\int x^m \phi_N(x)\phi_{N-1}(x)\mu(\d x) -\int x^m T_N(x)T_{N-1}(x)\mu_{eq}(\d x)\right| =0.
\qe
To do so, we first complete for convenience the sequences of recurrence coefficients $(a_n)_{n\in\N}$ and $(b_n)_{n\in\N}$ introduced in \eqref{3-term rec} as bi-infinite sequences $(a_n)_{n\in\Z}$, $(b_n)_{n\in\Z}$, where we set $a_n=b_n=0$ for every $n<0$. It follows inductively from the three-term recurrence relation \eqref{3-term rec} that for every $k,\ell,m\in\N$,
\eq
\label{path rep}
\int x^m \phi_k(x)\phi_{\ell}(x)\mu(\d x)=\sum_{\gamma:(0,k)\rightarrow (m,\ell)} \prod_{e\in\gamma}\omega(e)_{\{(a_n),\,(b_n)\}},
\qe
where the sum ranges over all the paths $\gamma$ lying on the oriented graph with vertices $\Z^2$ and edges $(i,j)\to (i+1,j+1)$, $(i,j)\to (i+1,j)$ and $(i,j)\to (i+1,j-1)$ for $(i,j)\in\Z^2$, starting from $(0,k)$ and ending at $(m,\ell)$. For every edge $e$ of $\Z^2$, we introduced the weight associated with the sequences $(a_n)=(a_n)_{n\in\Z}$, $(b_n)=(b_n)_{n\in\Z}$ defined by
\eq
\label{weights paths}
\omega(e)_{\{(a_n),\,(b_n)\}}=
\begin{cases}
a_j & \mbox{if } e=(i,j)\to (i+1,j+1)\\
b_j & \mbox{if } e=(i,j)\to (i+1,j)\\
a_{j-1} & \mbox{if } e=(i,j)\to (i+1,j-1),
\end{cases}
\qe
see also \citep{Har15}. Now, observe that the set of all paths $\gamma$ satisfying $\gamma:(0,k)\rightarrow (m,\ell)$ only depends on $k,\ell$ through  $|k-\ell|$ and is empty as soon as $|k-\ell|>m$. Thus it is a finite set, and moreover, by translation of the indices, for every $k,\ell,m\in\N$ we have
\eq
\label{nice path}
\int x^m \phi_k(x)\phi_{\ell}(x)\mu(\d x)=\bv 1_{|k-\ell|\leq m} \sum_{\gamma:(0,k-\ell)\rightarrow (m,0)} \prod_{e\in\gamma}\omega(e)_{\{(a_{n+\ell}),\,(b_{n+\ell})\}}.
\qe
In particular, see  \eqref{Cheby 3-rec}--\eqref{rec coef Cheby},
\eq
\label{nice path Cheby}
\int x^m T_k(x)T_{\ell}(x)\mu_{eq}(\d x)=\bv 1_{|k-\ell|\leq m} \sum_{\gamma:(0,k-\ell)\rightarrow (m,0)} \prod_{e\in\gamma}\omega(e)_{\{(a^*_{n+\ell}),\,(b^*_{n+\ell})\}}.
\qe
Finally,  by combining  \eqref{nice path} and \eqref{nice path Cheby}, we obtain
\begin{multline}
\label{key ineq 3-rec}
\left| \int x^m \phi_k(x)\phi_{\ell}(x)\mu(\d x) -  \int x^m T_k(x)T_{\ell}(x)\mu_{eq}(\d x)\right| \\
\leq \sum_{\gamma:(0,k-\ell)\rightarrow (m,0)} \Big|\prod_{e\in\gamma} \omega(e)_{\{(a_{n+\ell}),\,(b_{n+\ell})\}}- \prod_{e\in\gamma}\omega(e)_{\{(a^*_{n+\ell}),\,(b^*_{n+\ell})\}}\Big|.
\end{multline}
Together with the Nevai-class assumption for $\mu$, which states that $a_n- a_n^*\to 0$ and $b_n- b_n^*\to 0$ as $n\to\infty$, it follows that \eqref{moment 1} and \eqref{moment 2} hold true by taking $k=\ell=N$, or $k=N$ and $\ell=N-1$, in \eqref{key ineq 3-rec}. This completes the proof of Lemma \ref{convergence CD out diag}.

\end{proof}

\subsection{Sketch of the proof of Theorem \ref{th CLT general}}
\label{s:sketch proof CLT MOPE}

\subsubsection{Reduction to probability reference measures}
\label{s:reductionToProba}
First, in the statement of Theorem \ref{th CLT general}, we can assume the reference measure  $\mu$ is a probability measure without loss of generality. This will simplify notation in the proof of Theorem \ref{th CLT general}.

Indeed,  for any positive measure $\mu$ on $I^d$ with (multivariate) orthonormal
polynomials $\phi_k$ and any $\alpha>0$, the orthonormal polynomials associated
with $\alpha\mu$ are $\phi_k/\sqrt\alpha$. Thus, if we momentarily denote by $K_N(\mu;x,y)$ the $N$th Christoffel-Darboux kernel associated with a measure $\mu$, we have $K_N(\alpha\mu;x,y)=K_N(\mu;x,y)/\alpha$. As a consequence, for every $n\geq 1$, the correlation measures
\[
\det\Big[K_N(\mu; x_i,x_\ell)\Big]_{i,\ell=1}^n\prod_{i=1}^n\mu(\d x_i),
\]
remain unchanged if we replace $\mu$ by $\alpha\mu$ for any $\alpha>0$. Hence,
multivariate OP Ensembles are invariant under $\mu\mapsto\alpha\mu$.

\subsubsection{Soshnikov's key theorem}
As stated previously,  Theorem \ref{th CLT general} has already been proven when
$d=1$ by \cite{BrDu13}, as a consequence of a generalized strong Szeg\H{o}
theorem they obtained. The difficulty in proving Theorem \ref{th CLT general}
when $d\geq 2$ turns out to be of different nature than the one-dimensional
setting. Indeed,  the next result due to Soshnikov essentially states that the
cumulants of order three and more of the linear statistic $\sum f(\bv x_i)$
decay to zero as $N\to\infty$ as soon as its variance goes to infinity, \rev{and we will show the variance indeed diverges when $d\geq 2$}. Thus, a CLT follows easily as soon as one can obtain asymptotic estimates on the variance. However, if obtaining such variance estimates is relatively easy when $d=1$, the task becomes more involved in higher dimension.

More precisely, the general result \cite[Theorem 1]{Sos02} has the following consequence.

\begin{theorem}{\bf (Soshnikov)}
\label{Sosh} Let $\bv x_1,\ldots,\bv x_N$ form a multivariate OP Ensemble with respect to a given reference measure $\mu$ on $I^d$. Consider a sequence $(f_N)$ of uniformly bounded and measurable real-valued functions on $I^d$ satisfying, as $N\to\infty$,
\eq
\label{Sosh cond}
\Var\left[\, \sum_{i=1}^Nf_N(\bv x_i)\right]\longrightarrow\infty,
\qe
and,  for some $\delta>0$,
\eq
\label{Sosh cond 2}
\E\left[\, \sum_{i=1}^N\big|f_N(\bv x_i)\big|\right]=O\left(\Var\left[\, \sum_{i=1}^Nf_N(\bv x_i)\right]^\delta\right).
\qe
Then, we have
$$
\frac{\displaystyle\sum_{i=1}^Nf_N(\bv x_i)-\E\left[\, \sum_{i=1}^Nf_N(\bv x_i)\right]}{\displaystyle\sqrt{\Var\left[\, \sum_{i=1}^Nf_N(\bv x_i)\right]}}
\xrightarrow[N\to\infty]{law}\mathcal N(0,1).
$$
\end{theorem}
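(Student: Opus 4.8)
The plan is to obtain Theorem~\ref{Sosh} as a corollary of the general cumulant estimates of \cite[Theorem~1]{Sos02}, together with the classical method of moments. The one structural point to record first is that the kernel $K_N$ of \eqref{kernel DPP}, regarded as an integral operator on $L^2(\mu)$, is the orthogonal projection onto $\mathrm{Span}(\phi_0,\dots,\phi_{N-1})$; in particular it is self-adjoint, finite rank, and satisfies $K_N=K_N^2$ and $0\le K_N\le\mathrm{Id}$. This is exactly the setting in which Soshnikov's analysis of linear statistics of determinantal point fields is carried out, so the multivariate OP Ensemble falls within its scope, and it remains only to feed in the two quantitative hypotheses \eqref{Sosh cond} and \eqref{Sosh cond 2}.

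Next I would write $S_N=\sum_{i=1}^N f_N(\bv x_i)$ and, for $N$ large enough that $\Var(S_N)>0$, set $\widetilde S_N=\big(S_N-\E[S_N]\big)/\sqrt{\Var(S_N)}$. Since $f_N$ is bounded and the random configuration carries exactly $N$ points almost surely, $\widetilde S_N$ is a bounded random variable, so all its cumulants $C_k(\widetilde S_N)$ are finite. Cumulants of order $\ge 2$ are translation invariant and $k$-homogeneous under scaling, whence $C_1(\widetilde S_N)=0$, $C_2(\widetilde S_N)=1$, and $C_k(\widetilde S_N)=C_k(S_N)/\Var(S_N)^{k/2}$ for every $k\ge 3$. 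Thus the statement reduces to showing that $C_k(S_N)=o\big(\Var(S_N)^{k/2}\big)$ for each fixed $k\ge 3$: once all cumulants of $\widetilde S_N$ converge to those of $\mathcal N(0,1)$ (all zero except the second, equal to $1$), the moments of $\widetilde S_N$ converge to the Gaussian moments (moments being fixed polynomials in the cumulants), and since $\mathcal N(0,1)$ is moment-determinate the method of moments yields $\widetilde S_N\xrightarrow[N\to\infty]{law}\mathcal N(0,1)$.

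The bound $C_k(S_N)=o\big(\Var(S_N)^{k/2}\big)$ is what \cite[Theorem~1]{Sos02} provides under \eqref{Sosh cond}--\eqref{Sosh cond 2}, and I would quote it rather than reprove it. Its mechanism, for orientation: from the Fredholm determinant identity $\E\big[e^{zS_N}\big]=\det\!\big(\mathrm{Id}+(e^{zf_N}-1)K_N\big)$ (a genuine determinant since $K_N$ is finite rank, entire in $z$ since $S_N$ is bounded) one gets $\log\E[e^{zS_N}]=\tr\log\!\big(\mathrm{Id}+(e^{zf_N}-1)K_N\big)$; expanding the logarithm and the exponential and reading off the coefficient of $z^k/k!$ expresses $C_k(S_N)$ as a finite combination of traces $\tr\big[f_N^{\,j_1}K_N f_N^{\,j_2}K_N\cdots f_N^{\,j_n}K_N\big]$ with $j_1+\dots+j_n=k$. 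Soshnikov's core argument is a combinatorial reorganization of these traces that, exploiting $K_N^2=K_N$, controls all of them in terms of $\Var(S_N)=\tr[f_N^2K_N]-\tr[f_NK_Nf_NK_N]$, the uniform bound $\sup_N\|f_N\|_\infty<\infty$, and the first-order quantity $\tr[|f_N|K_N]=\E\big[\sum_i|f_N(\bv x_i)|\big]$; condition \eqref{Sosh cond 2} is precisely what renders the contribution of this last quantity negligible on the scale $\Var(S_N)^{k/2}$ for $k\ge 3$, while \eqref{Sosh cond} absorbs the $\Var(S_N)$ term. Hence, in a self-contained treatment the only real work --- and the main obstacle --- would be this combinatorial bookkeeping of the cumulant expansion; for Theorem~\ref{Sosh} as stated, it suffices to note the projection property of $K_N$ and check that $(f_N)$ meets the hypotheses of \cite[Theorem~1]{Sos02}.
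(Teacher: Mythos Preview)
Your proposal is correct and matches the paper's approach: the paper does not prove Theorem~\ref{Sosh} at all but simply records it as a direct consequence of \cite[Theorem~1]{Sos02}, exactly as you do. Your write-up in fact supplies more than the paper does, by making explicit that $K_N$ is a finite-rank orthogonal projection (hence within the scope of Soshnikov's result) and by sketching the cumulant/method-of-moments mechanism behind it.
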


\subsubsection{Variance asymptotics}
In order to prove Theorem \ref{th CLT general} it is enough to show the following asymptotics.

\begin{proposition}
\label{variance general CLT easy}
Assume $\mu$ and $\bv x_1,\ldots,\bv x_{N}$ satisfy the hypothesis of  Theorem \ref{th CLT general}.  Then, for every $f\in\mathscr C^1(I^d,\R)$, we have
\eq
\label{variance asympt}
\lim_{N\to\infty}\frac{1}{N^{1-1/d}}\Var\left[\, \sum_{i=1}^{N}f(\bv x_i)\right]=\sigma_f^2.
\qe
\end{proposition}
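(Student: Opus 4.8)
The plan is to compute the variance of the linear statistic exactly using the determinantal structure, and then pass to the limit using the product structure of $\mu$ together with Lemma~\ref{convergence CD out diag}. Recall that for a DPP with kernel $K_N$ and reference measure $\mu$, the variance of $\sum_i g(\bv x_i)$ equals
\[
\frac12 \iint \big(g(x)-g(y)\big)^2 K_N(x,y)^2 \, \mu(\d x)\mu(\d y) + \int g(x)^2\big(K_N(x,x)-\textstyle\int K_N(x,y)^2\mu(\d y)\big)\mu(\d x).
\]
Since our kernel \eqref{e:CDKernel} is a projection kernel ($K_N$ is the reproducing kernel of an $N$-dimensional subspace of $L^2(\mu)$), the reproducing property gives $\int K_N(x,y)^2\mu(\d y)=K_N(x,x)$, so the second term vanishes and we are left with the clean formula
\[
\Var\Big[\sum_{i=1}^N f(\bv x_i)\Big] = \frac12 \iint \big(f(x)-f(y)\big)^2 K_N(x,y)^2 \, \mu(\d x)\mu(\d y).
\]
So the whole problem reduces to the asymptotics of this double integral after dividing by $N^{1-1/d}$.

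Next I would exploit the product structure. Write $N=M^d$ along the subsequence of perfect $d$-th powers first (the general case follows by the squeezing argument already used in the proof of Corollary~\ref{weak conv eq}, since $K_{M^d}(x,x)\le K_N(x,x)\le K_{(M+1)^d}(x,x)$ and both $M^d/N,(M+1)^d/N\to1$). For $N=M^d$ we have the factorization \eqref{CD kernel split}, $K_{M^d}(x,y)=\prod_{j=1}^d K_M^{(j)}(x_j,y_j)$. The key algebraic manipulation is to telescope the difference $f(x)-f(y)$ coordinate by coordinate: writing $x=(x_1,\dots,x_d)$, $y=(y_1,\dots,y_d)$, decompose
\[
f(x)-f(y) = \sum_{\alpha=1}^d \big( f(x_1,\dots,x_{\alpha-1},x_\alpha,y_{\alpha+1},\dots,y_d) - f(x_1,\dots,x_{\alpha-1},y_\alpha,y_{\alpha+1},\dots,y_d) \big),
\]
so that each summand only involves the difference in the $\alpha$-th coordinate. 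Plugging this into the double integral, expanding the square, and using $(\sum a_\alpha)^2 = \sum a_\alpha^2 + \text{cross terms}$, the diagonal terms $\alpha=\beta$ will produce the leading $N^{1-1/d}$ contribution: in the $\alpha$-th diagonal term, one integrates $K_M^{(j)}(x_j,y_j)^2$ against $\mu_j\otimes\mu_j$ in all coordinates $j\neq\alpha$ — each such integral equals $M$ by orthonormality — giving a factor $M^{d-1}=N^{(d-1)/d}=N^{1-1/d}$, while in the $\alpha$-th coordinate one has $\iint (\Delta_\alpha f)^2 (x_\alpha-y_\alpha)^2 \widetilde K_M^{(\alpha)}(x_\alpha,y_\alpha)^2 \mu_\alpha\mu_\alpha$ with $\widetilde K_M^{(\alpha)}(s,t)=K_M^{(\alpha)}(s,t)/(s-t)$ (here I use that the difference quotient of a $\mathscr C^1$ function is controlled), which by Lemma~\ref{convergence CD out diag} converges. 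The cross terms $\alpha\neq\beta$ should be shown to be $o(N^{1-1/d})$: in such a term there are at least two coordinates where one does not get the full orthonormality factor $M$, and the off-diagonal decay of $K_M^{(j)}$ encoded in $Q_M\to L$ (a \emph{bounded} limiting measure) means these contribute only $O(M^{d-2})=o(M^{d-1})$.

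Carrying out the $\alpha$-th diagonal limit: after the change of variables $x_j=\cos\theta_j$ and using the limit measure $L(\d x,\d y)=\frac12(1-xy)\mu_{eq}(\d x)\mu_{eq}(\d y)$ from Lemma~\ref{convergence CD out diag} in coordinate $\alpha$ and $\mu_{eq}$ in coordinates $j\neq\alpha$ (via Corollary~\ref{weak conv eq} / Theorem~\ref{weak conv eq 1D} for the diagonal masses), the limit of $N^{-(1-1/d)}$ times the $\alpha$-th diagonal term becomes
\[
\frac14 \int_{I^{d-1}}\!\!\iint_{I^2} \big(\partial_\alpha f\big)^2 (1-x_\alpha y_\alpha)\, \mu_{eq}(\d x_\alpha)\mu_{eq}(\d y_\alpha) \prod_{j\neq\alpha}\mu_{eq}(\d x_j),
\]
(the difference quotient $(f(\dots x_\alpha\dots)-f(\dots y_\alpha\dots))/(x_\alpha-y_\alpha)$ converging uniformly to $\partial_\alpha f$ on compacts, with the $1-x_\alpha y_\alpha$ factor absorbing the mild singularity of $\mu_{eq}$ at $\pm1$). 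Expanding $f$ in the multivariate Chebyshev basis $\prod_j T_{k_j}(x_j)$ and using the recurrence \eqref{Cheby 3-rec} to evaluate the quadratic forms $\iint T_k(s)T_{k'}(s)(1-st)T_\ell(t)T_{\ell'}(t)\mu_{eq}\mu_{eq}$, summing over $\alpha$ collapses exactly to $\sigma_f^2 = \frac12\sum (k_1+\dots+k_d)\hat f(k_1,\dots,k_d)^2$; this is the same computation that underlies the $d=1$ case in \cite{BrDu13}, now done coordinatewise. I expect the main obstacle to be the rigorous control near the boundary $\partial I^d$: the density $\mu_{eq}^{\otimes d}$ blows up like $(1-x_j^2)^{-1/2}$, the weak convergences in Lemma~\ref{convergence CD out diag} and Corollary~\ref{weak conv eq} are only against \emph{continuous bounded} functions, and the difference-quotient kernel is not bounded, so one must justify the interchange of limit and integration — presumably by a truncation argument on $I_\epsilon^d$ plus a uniform tail estimate showing the contribution of $I^d\setminus I_\epsilon^d$ is small uniformly in $N$ (using that $(x-y)^2K_N(x,y)^2\mu\otimes\mu$ has uniformly bounded total mass $2a_N^2\to1/2$ from \eqref{LN CD}), combined with the uniform convergence of the difference quotient of $f\in\mathscr C^1$ to $\partial_\alpha f$.
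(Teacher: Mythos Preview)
Your approach is natural, and a version of it is even sketched (and then commented out) in the paper's source as a ``short, but incomplete, analytic proof''. The genuine gap is the passage from the subsequence $N=M^d$ to arbitrary $N$. The squeeze in Corollary~\ref{weak conv eq} works because the \emph{diagonal} $K_N(x,x)=\sum_{k<N}\phi_k(x)^2$ is monotone in $N$. Your variance formula, however, involves the off-diagonal square $K_N(x,y)^2=\big(\sum_{k<N}\phi_k(x)\phi_k(y)\big)^2$, which is \emph{not} monotone in $N$: adding the term $\phi_N(x)\phi_N(y)$ can decrease the absolute value of the partial sum. So there is no inequality $K_{M^d}(x,y)^2\le K_N(x,y)^2\le K_{(M+1)^d}(x,y)^2$, and nothing sandwiches the variance integral between its values at consecutive $d$-th powers. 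A second, smaller issue: in your $\alpha$-th diagonal term the telescoped integrand $(\Delta_\alpha f)^2$ still depends on the coordinates $x_j,y_j$ for $j\ne\alpha$, so the integrals over those coordinates do not simply ``equal $M$ by orthonormality''; what you actually need is the weak convergence of $\tfrac1M K_M^{(j)}(x_j,y_j)^2\mu_j(\d x_j)\mu_j(\d y_j)$ to the diagonal measure $\mu_{eq}(\d x_j)\bs 1_{x_j=y_j}$, which is a separate Nevai-type lemma and not the content of Corollary~\ref{weak conv eq} or Theorem~\ref{weak conv eq 1D}.

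The paper circumvents both issues by abandoning the analytic picture. It represents the covariance as the double sum $\sum_{n<N\le m}\langle P\phi_n,\phi_m\rangle\langle Q\phi_n,\phi_m\rangle$ (Lemma~\ref{cov representation}), and in the Chebyshev case $\mu=\mu_{eq}^{\otimes d}$ reduces the computation to counting index sets $\mathbb A_N[P,P]\subset\N^d\times\N^d$ (Proposition~\ref{cov cheby prop}). The passage to general $N$ is then achieved by a purely \emph{combinatorial} monotonicity, Lemma~\ref{non-decreasing}, showing $|\mathbb A_N[P,P]|\le|\mathbb A_{N+1}[P,P]|$ for large $N$; this relies essentially on the graded lexicographic order of $\frak b$ and replaces the unavailable kernel monotonicity. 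General Nevai-class $\mu$ is compared to $\mu_{eq}^{\otimes d}$ term by term via the path representation of $\langle x^m\phi_k,\phi_\ell\rangle$ in the recurrence coefficients (Proposition~\ref{general cov}), and the extension from polynomials to $\mathscr C^1$ is done only at the very end by a Lipschitz density argument. Your limiting expression and the identification with $\sigma_f^2$ are correct along $N=M^d$; what is missing is a mechanism to handle all $N$, and the paper's answer is combinatorics rather than analysis.
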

Indeed, for any $d\geq 2$ and any $f\in \mathscr C^1(I^d,\R)$, Corollary
\ref{weak conv eq} and Proposition \ref{variance general CLT easy} imply
\eqref{Sosh cond} and \eqref{Sosh cond 2} with $f_N=f$ and $\delta=d/(d-1)$.
Thus, we can apply Theorem~\ref{Sosh} to obtain Theorem~\ref{th CLT general}.

Proposition \ref{variance general CLT easy} is the main technical result of this work. Consider the $d$-fold product of the equilibrium measure \eqref{mueq}, namely the probability measure on $I^d$ given by
\eq
\label{eq measure d}
\mu_{eq}^{\otimes d}(\d x)=\omega_{eq}^{\otimes d}(x)\d x,\qquad \omega_{eq}^{\otimes d}(x)= \prod_{j=1}^d\frac{1}{\pi\sqrt{1-x_j^2}}\ \bs 1_{I^d}(x).
\qe
In our proof of Proposition~\ref{variance general CLT easy}, we start by
investigating the limit \eqref{variance asympt} when
$\mu=\mu_{eq}^{\otimes d}$, since algebraic identities are available for this
reference measure. Then, we use comparison estimates to prove \eqref{variance asympt} in the general case.








 \subsection{A proof for the upper bound on the limiting variance}
\label{s:boundOnLimitVar}

As stated in Proposition \ref{bound limit var}, one can bound the limiting
variance $\sigma_f^2$ by a Dirichlet energy. Besides providing some control on
the amplitude of $\sigma_f^2$, we will need this inequality in the proof of
Proposition \ref{variance general CLT easy}. We now give a proof for this proposition.

\begin{proof}[Proof of Proposition \ref{bound limit var}]

Let $\mu_{sc}(\d x)=\pi^{-1}\sqrt{1-x^2}\IND_{I}(x)\d x$ be the semi-circle
measure. The associated orthonormal polynomials are the
so-called Chebyshev polynomials of the second kind
$$U_{k}(\cos \theta) = \sqrt{2}\,\frac{\sin((k+1)\theta)}{\sin\theta}.$$
For any $1\leq j\leq d$, define the measure
$$ \nu^j(\d x) = \mu_{eq}(\d x_1) \cdots \mu_{eq}(\d x_{j-1}) \mu_{sc}(\d x_j)
\mu_{eq}(\d x_{j+1})\cdots  \mu_{eq}(\d x_d),$$
so that the RHS of \eqref{Poincare ineq2} becomes
$$
\frac{1}{2}\sum_{j=1}^d \int_{I^d}  \Big(\partial_j
f(x)\Big)^2 \,\nu^j(\d x).
$$
For any $\bs
  k=(k_1,\ldots,k_d)\in\N^d$,  set $T_{\bs k}(x)= T_{k_1}(x_1)\cdots
  T_{k_d}(x_d)$, where $T_k$ are the Chebyshev polynomials \eqref{Cheby cos},
  and let
$$
V_{\bs k}^j(x) = T_{k_1}(x_1) \cdots T_{k_{j-1}}(x_{j-1}) U_{k_j}(x_j)
T_{k_{j+1}}(x_{j+1})\cdots  T_{k_d}(x_d).
$$
Thus, $(T_{\bs k})_{\bs k\in\N^d}$ and $(V^j_{\bs k})_{\bs k\in\N^d}$ respectively form an
orthonormal Hilbert basis
of $L^2(\mu_{eq}^{\otimes d})$ and $L^2(\nu^j)$. Let $f\in\mathscr C^1(I^d,\R)$, so that
$f=\sum_{\bs k\in\N^d} \hat f(\bs k) T_{\bs k}$ where $\hat f(\bs k)$ is as in \eqref{f
    hat}. Using the identity
$T_k'=kU_{k-1}$, it comes
$$
\partial_j f(x) =  \sum_{\bs k\in\N^d} k_j \hat f(\bs k) V_{\bs k}^j(x).
$$
Then, Parseval's identity in $L^2(\nu^j)$ yields
$$
\int_{I^d} \Big(\partial_j
f(x)\Big)^2 \,\nu^j(\d x) = \sum_{\bs k\in\N^d} k_j^2 \hat f(\bs k)^2.
$$
Summing over $1\leq j \leq d$, the RHS of \eqref{Poincare ineq2} equals
\eq
\label{e:identityToProve}
\frac{1}{2}\sum_{j=1}^d \int_{I^d}  \Big(\partial_j
f(x)\Big)^2 \,\nu^j(\d x)
 = \frac 12\sum_{\bs k\in\N^d} (k_1^2+\,
\cdots \,+ k_d^2)\hat f({\bs k})^2,
\qe
from which Proposition~\ref{bound limit var} easily follows.

\end{proof}

\subsection{Assumptions of Theorem~\ref{DPPMC Th1} and  outline of the proof }
\label{reg cond Th2}

We now discuss the assumptions and proof of Theorem~\ref{DPPMC Th1}.

Assume  the reference measure $\mu$ is a product of $d$ measures on $I$, and also that  $\mu$ has a density $\omega$. Then, Corollary \ref{weak conv eq} suggests that, as $N\to\infty$,
\eq
\label{Heuristic Totik}
\frac N{K_N(x,x)}\approx \frac{\omega(x)}{\omega_{eq}^{\otimes d}(x)}.
\qe
This heuristic would yield for the variance of the estimator \eqref{estimator},
\eq
\label{Heuristic}
\Var\left[\ \sum_{i=1}^N \frac{f(\bv x_i)}{K_N(\bv x_i,\bv x_i)} \ \right]\approx \frac1{N^2}\Var\left[\ \sum_{i=1}^N  f(\bv x_j)\frac{\omega(\bv x_j)}{\omega_{eq}^{\otimes d}(\bv x_j)} \ \right]\approx \frac{\Omega_{f,\omega}^2}{N^{1+1/d}}\ ,
\qe
where for the last approximation we used Proposition \ref{variance general CLT easy} with test function $f\omega/\omega_{eq}^{\otimes d}$, recalling $\Omega_{f,\omega}$ was defined in \eqref{omega f}. This would essentially yield the CLT in Theorem~\ref{DPPMC Th1} by applying Theorem \ref{Sosh} to $f_N(x)=Nf(x)/K_N(x,x)$. To make the approximation \eqref{Heuristic} rigorous, we will need extra regularity assumptions on $\mu$.

First, regarding the approximation \eqref{Heuristic Totik}, we have the following result.

\begin{theorem}{\bf (Totik)}
\label{Totik asymp unif}
Assume $\mu(\d x)=\omega(x)\d x$ with $\omega(x)=\omega_1(x_1)\cdots\omega_d(x_d)$, and that $\omega_j$ is continuous and positive on $I$. Then, for every $\epsilon>0$, we have
\eq
\label{Totik asymp unif explicit}
\frac N{K_N(x,x)} \xrightarrow[N\to\infty]{} \frac{\omega(x)}{\omega_{eq}(x)}
\qe
uniformly  for $x\in I^d_\epsilon$.
\end{theorem}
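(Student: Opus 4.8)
The plan is to reduce the $d$-dimensional statement to the one-dimensional asymptotics of Christoffel functions, which is where the genuine content lies and which is a classical theorem of M\'at\'e--Nevai--Totik and Totik (see, e.g., \citep{Sim11}). Writing $\lambda_M^{(j)}(x)=1/K_M^{(j)}(x,x)$ for the $M$th Christoffel function of $\mu_j(\d x)=\omega_j(x)\d x$, the hypothesis that $\omega_j$ is continuous and strictly positive on $I$ forces $\log\omega_j$ to be bounded, hence $\mu_j$ lies in the Szeg\H{o} class; the classical result then gives, for every $\epsilon>0$,
\[
\frac{M}{K_M^{(j)}(x,x)}\xrightarrow[M\to\infty]{}\frac{\omega_j(x)}{\omega_{eq}(x)}=\pi\sqrt{1-x^2}\,\omega_j(x)
\]
uniformly for $x\in I_\epsilon$. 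I would take this as the input and assemble the $d$-dimensional statement from it.

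First I would handle $N$ of the special form $N=M^d$. By the factorization \eqref{CD kernel split}, $K_{M^d}(x,x)=\prod_{j=1}^d K_M^{(j)}(x_j,x_j)$, so
\[
\frac{M^d}{K_{M^d}(x,x)}=\prod_{j=1}^d\frac{M}{K_M^{(j)}(x_j,x_j)}.
\]
Each of the $d$ factors converges uniformly on $I_\epsilon$ to the function $x_j\mapsto\omega_j(x_j)/\omega_{eq}(x_j)$, which is continuous, hence bounded, on $I_\epsilon$. Since a finite product of uniformly bounded sequences that converge uniformly again converges uniformly (by the usual telescoping estimate), this yields
\[
\frac{M^d}{K_{M^d}(x,x)}\xrightarrow[M\to\infty]{}\prod_{j=1}^d\frac{\omega_j(x_j)}{\omega_{eq}(x_j)}=\frac{\omega(x)}{\omega_{eq}^{\otimes d}(x)}
\]
uniformly for $x\in I_\epsilon^d$, which is precisely the right-hand side of \eqref{Totik asymp unif explicit}.

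Next I would pass to general $N$ by the sandwiching device already used in the proof of Corollary \ref{weak conv eq}. Because $K_N(x,x)=\sum_{k=0}^{N-1}\phi_k(x)^2$ is nondecreasing in $N$, setting $M=\lfloor N^{1/d}\rfloor$ (so that $M^d\leq N<(M+1)^d$) gives $K_{M^d}(x,x)\leq K_N(x,x)\leq K_{(M+1)^d}(x,x)$, hence
\[
\frac{N}{(M+1)^d}\cdot\frac{(M+1)^d}{K_{(M+1)^d}(x,x)}\;\leq\;\frac{N}{K_N(x,x)}\;\leq\;\frac{N}{M^d}\cdot\frac{M^d}{K_{M^d}(x,x)}.
\]
As $N\to\infty$ one has $M\to\infty$, $N/M^d\to 1$ and $N/(M+1)^d\to 1$ (from $M^d/(M+1)^d\leq N/(M+1)^d<1$ and $1\leq N/M^d<(1+1/M)^d$), while the Christoffel-function factors at both ends converge uniformly on $I_\epsilon^d$ to $\omega(x)/\omega_{eq}^{\otimes d}(x)$ by the previous step. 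The squeeze theorem then delivers the claimed uniform convergence.

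The main, and essentially only, obstacle is the one-dimensional Christoffel-function asymptotics, which I would invoke as a black box rather than reprove; everything else is bookkeeping. The only mild care needed in the reduction is to keep in mind that the limiting one-dimensional factors are continuous and bounded away from $0$ on $I_\epsilon$ — guaranteed by the continuity and strict positivity of $\omega_j$ together with $\sqrt{1-x^2}>0$ there — so that forming the finite product, and (if one prefers to phrase things via $K_N(x,x)/N$) passing through reciprocals, are harmless; the values of $N$ that are not perfect $d$th powers are dealt with purely by the monotonicity of $N\mapsto K_N(x,x)$ and the elementary limit $(M+1)^d/M^d\to 1$.
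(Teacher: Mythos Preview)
Your proposal is correct and matches the paper's own argument: the paper simply cites the one-dimensional Christoffel-function asymptotics (referring to \cite[Section 3.11]{Sim11}) and then says the case $d\geq 2$ follows ``by the same arguments as in the proof of Corollary \ref{weak conv eq}'', which is exactly your factorization-plus-sandwiching reduction via $M=\lfloor N^{1/d}\rfloor$ and the monotonicity of $N\mapsto K_N(x,x)$.
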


For a proof of Theorem~\ref{Totik asymp unif} when $d=1$, see \cite[Section 3.11]{Sim11} and
references therein. The case $d\geq 2$ follows by the same arguments as in the proof of Corollary \ref{weak conv eq}.

\begin{remark} It is because of Theorem \ref{Totik asymp unif} that we restrict
  $\mathscr C^1(I^d,\R)$  to the class $\mathscr C$ defined in \eqref{class test
    functions} in the assumptions of Theorem \ref{DPPMC Th1}. Unfortunately,
  there are examples of reference measures $\mu$ on $I$ such that the
  convergence \eqref{Totik asymp unif explicit} is not uniform on the whole of
  $I$. However, in order to extend $\mathscr C$ to $\mathscr C^1(I^d,\R)$ in the
  statement of Theorem \ref{DPPMC Th1}, it would be enough to have  $\sup_{x\in
    I^d_{\epsilon_N}}|N/K_N(x,x) - \omega(x)/\omega_{eq}^{\otimes d}(x)|\to 0$
  for some sequence $\epsilon_N$ going to zero as $N\to\infty$, but we were not
  able to locate such a result in the literature.
\end{remark}

Next, the first approximation in \eqref{Heuristic} requires a control on the
rate of change of $N/K_N(x,x)$. To this end, we introduce an extra assumption on
the reference measure $\mu$. More precisely, let us denote
\eq
\mathscr D_{N}(x,y)=\frac{N/K_N(x,x)- N/K_N(y,y)}{\|x-y\|},
\label{e:defDN}
\qe
and further consider the sequence of measures on $I^d\times I^d$
\eq
\label{e:defQN}
Q_N(\d x,\d y)= \frac1{N^{1-1/d}}\|x-y\|^2 K_N(x,y)^2\mu(\d x)\mu(\d y).
\qe
Our extra assumption on $\mu$ is then the following.
\begin{assumption}
\label{a:regularity assumption}
The measure $\mu$ satisfies
\eq
\label{regularity assumption}
\lim_{C\to\infty}\limsup_{\delta\to 0}\limsup_{N\to\infty}\iint_{I^d_{\epsilon}\times I^d_{\epsilon},\ \|x-y\|\leq \delta} \bv 1_{\left|\mathscr D_N(x,y)\right|> C}\ \mathscr D_N(x,y)^2\ Q_N(\d x,\d y)=0.
\qe
\end{assumption}
In plain words, this means the squared rate of change $\mathscr D_{N}(x,y)^2$ is
uniformly integrable with respect to the measures $Q_N$, at least on the
restricted domain where $\|x-y\|$ is small enough and where $x$ and $y$ are not
allowed to reach the boundary of $I^d$.

\begin{remark} When $d=1$, Lemma \ref{convergence CD out diag} states that if $\mu$ is Nevai-class then  $Q_N$
converges weakly as $N\to\infty$  towards
$$
L(\d x,\d y)=\frac1{2\pi^2}\frac{1-xy}{\sqrt{1-x^2}\sqrt{1-y^2}} \ \bs 1_{I\times I}(x,y)\d x\d y.
$$
Because the density of $L$ is smooth within $I_\epsilon\times I_\epsilon$ for
every $\epsilon>0$, one may at least heuristically understand that
\eqref{regularity assumption} reduces to the uniform integrability of $\mathscr D_N(x,y)^2$ with
respect to the Lebesgue measure instead. In higher dimension, a similar guess
can be made
, but we do not pursue
this reasoning here.
\end{remark}

We now discuss sufficient conditions for \eqref{regularity assumption} to hold true.
\begin{remark} Since, for any $\kappa>0$, we have
$$
\bv 1_{\left|\mathscr D_{N}(x,y)\right|> C}\ \mathscr D_{N}(x,y)^2\leq \frac1{C^\kappa}\left|\mathscr D_{N}(x,y)\right|^{2+\kappa},
$$
we see that condition \eqref{regularity assumption} holds true as soon as, for every $\epsilon>0$, there exists $\kappa,\delta>0$ satisfying
$$
\limsup_{N\to\infty}\iint_{I^d_{\epsilon}\times I^d_{\epsilon},\ \|x-y\|\leq \delta} \left|\mathscr D_{N}(x,y)\right|^{2+\kappa}\  Q_N(\d x,\d y)<\infty.
$$
Namely, condition \eqref{regularity assumption} is satisfied if the
$L^{2+\kappa}(Q_N)$ norm of the rate of change of $N/K_N(x,x)$ is bounded, at
least on the restricted domain where $\|x-y\|$ is small enough and $x,y$ away
from the boundary of $I^d$.
\label{r:sufficientConditionI}
\end{remark}

The following assumption, \rev{which appears in Theorems~\ref{DPPMC Th1} to \ref{DPPMC Th3},} is much stronger than Assumption~\ref{a:regularity
  assumption}, but it is easier to check in practice.

\begin{assumption}
The measure $\mu$ satisfies
\begin{itemize}
\item[\rm{(a)}] $\mu(\d x)=\omega(x)\d x$ with $\omega$ positive and continuous on $(-1,1)^d$.
\item[\rm{(b)}] For every $\epsilon>0$, the sequence
$$
\frac1N \sup_{x\in I_\epsilon^d}\Big\|\nabla K_N(x,x)\Big\|
$$
is bounded.
\end{itemize}
\label{a:regularity assumption easy}
\end{assumption}
Indeed, thanks to the rough upper bound
$$
|\mathscr D_{N}(x,y)|\leq  \sup_{x\in I_\epsilon^d}\left\| \nabla  (N/K_N(x,x))\right\|,\qquad x,y\in\bs 1_{I_\epsilon^d},
$$
we see that Assumption~\ref{a:regularity assumption} holds true as soon
as for every $\epsilon>0$, $\sup_{x\in I_\epsilon^d}\| \nabla (N/K_N(x,x))\|$ is
bounded. Under Assumption~\ref{a:regularity assumption easy}(a),
the latter follows from Assumption~\ref{a:regularity assumption easy}(b). Indeed, Theorem
\ref{Totik asymp unif} and Assumption~\ref{a:regularity assumption easy}(a)
together yield that for every $\epsilon>0$, there exists $c>0$
independent of $N$ such that $\frac1NK_N(x,x)>c$ for every $x\in I_\epsilon^d$.

We conclude this section by proving that Jacobi measures~\eqref{Jacobi measure}
satisfy Assumption~\ref{a:regularity assumption easy}, which proves our
Proposition~\ref{Jacobi = regular}. We start with a general lemma.

\begin{lemma} Assume the measures $\mu_1,\dots,\mu_d$ on $I$ satisfy
  Assumption~\ref{a:regularity assumption easy}. Then the measure $\mu_1\otimes\dots\otimes
  \mu_d$ on $I^d$ satisfies Assumption~\ref{a:regularity assumption easy}.
\label{l:separability}
\end{lemma}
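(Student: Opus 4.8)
The plan is to reduce the $d$-dimensional statement to the one-dimensional hypotheses by exploiting the tensor-product structure of the Christoffel--Darboux kernel, namely the factorization $K_{M^d}(x,y)=\prod_{j=1}^d K_M^{(j)}(x_j,y_j)$ from \eqref{CD kernel split}. Part (a) of Assumption~\ref{a:regularity assumption easy} is immediate: if each $\mu_j(\d x)=\omega_j(x)\d x$ with $\omega_j$ positive and continuous on $(-1,1)$, then $\mu_1\otimes\cdots\otimes\mu_d$ has density $\omega(x)=\prod_j\omega_j(x_j)$, which is positive and continuous on $(-1,1)^d$. So the work is entirely in transferring part (b).

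For part (b), the first step is to relate $K_N(x,x)$ for general $N$ to the product kernel at level $M=\lfloor N^{1/d}\rfloor$. Using the monotonicity $K_{M^d}(x,x)\leq K_N(x,x)\leq K_{(M+1)^d}(x,x)$ together with \eqref{CD kernel split}, one gets two-sided bounds
\eq
\frac{M^d}{N}\prod_{j=1}^d\frac{1}{M}K_M^{(j)}(x_j,x_j)\leq \frac1N K_N(x,x)\leq \frac{(M+1)^d}{N}\prod_{j=1}^d\frac{1}{M+1}K_{M+1}^{(j)}(x_j,x_j).
\qe
By Theorem~\ref{Totik asymp unif} applied in dimension one, each $\frac1M K_M^{(j)}(x_j,x_j)$ converges uniformly on $I_\epsilon$ to $\omega_j(x_j)/\omega_{eq}(x_j)$, which is bounded above and below by positive constants on $I_\epsilon$; hence $\frac1N K_N(x,x)$ is bounded above and below by positive constants on $I_\epsilon^d$, uniformly in $N$. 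The second step is to control the gradient. Since $\nabla K_N(x,x)$ does not factor for general $N$, I would instead bound $\|\nabla K_N(x,x)\|$ directly. Writing $K_N(x,x)=\sum_{k=0}^{N-1}\phi_k(x)^2$ with $\phi_k(x)=\prod_j\phi_{k_j}^{(j)}(x_j)$, differentiation gives $\partial_\alpha K_N(x,x)=2\sum_{k=0}^{N-1}\phi_k(x)\,\partial_\alpha\phi_k(x)$, and by Cauchy--Schwarz $|\partial_\alpha K_N(x,x)|\leq 2\sqrt{K_N(x,x)}\,\bigl(\sum_{k=0}^{N-1}(\partial_\alpha\phi_k(x))^2\bigr)^{1/2}$. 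The factor $\sum_{k<N}(\partial_\alpha\phi_k(x))^2$ can be written, using the product structure, as $\sum_{\bs n}(\partial_\alpha\phi_{\bs n}(x))^2$ over the relevant index set; because the index set $\{\frak b(0),\dots,\frak b(N-1)\}$ is squeezed between $\mathcal C_M$ and $\mathcal C_{M+1}$, this sum is bounded by the one over $\mathcal C_{M+1}$, which factors as $\bigl(\sum_{n_\alpha=0}^{M}(\phi_{n_\alpha}^{(\alpha)}(x_\alpha)')^2\bigr)\prod_{j\neq\alpha}\bigl(\sum_{n_j=0}^{M}\phi_{n_j}^{(j)}(x_j)^2\bigr)=\bigl(\partial_{x_\alpha}^{\mathrm{partial}}\text{-type sum}\bigr)$; more precisely it equals $\bigl(\sum_{n=0}^{M}(\phi_n^{(\alpha)})'(x_\alpha)^2\bigr)\prod_{j\neq\alpha}K_{M+1}^{(j)}(x_j,x_j)$. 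The one-dimensional derivative sum is, up to a factor $2$, exactly $\|\nabla K_{M+1}^{(\alpha)}(x_\alpha,x_\alpha)\|$-related data: indeed $\partial_x K_{M+1}^{(\alpha)}(x,x)=2\sum_{n\le M}\phi_n^{(\alpha)}(x)(\phi_n^{(\alpha)})'(x)$, so by Cauchy--Schwarz again $\sum_{n\le M}(\phi_n^{(\alpha)})'(x)^2 \geq \bigl(\partial_x K_{M+1}^{(\alpha)}(x,x)\bigr)^2/(4K_{M+1}^{(\alpha)}(x,x))$, which is the wrong direction; so instead I would invoke Assumption~\ref{a:regularity assumption easy}(b) for $\mu_\alpha$ in the form that gives an upper bound on $\sum_{n\le M}(\phi_n^{(\alpha)})'(x)^2$.

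The cleanest route is therefore to observe that Assumption~\ref{a:regularity assumption easy}(b) for a one-dimensional measure is in fact equivalent to the boundedness of $\frac1M\sup_{x\in I_\epsilon}\sum_{n=0}^{M-1}(\phi_n'(x))^2$: from $\frac1M|\partial_x K_M(x,x)|=\frac2M|\sum_n\phi_n(x)\phi_n'(x)|$ one direction is Cauchy--Schwarz combined with the uniform lower bound $\frac1MK_M(x,x)\geq c>0$ and the uniform upper bound on $\frac1M\sum_n(\phi_n')^2$; for the converse one can use the monotonicity in $M$ and a telescoping/doubling argument, or simply take this sum-of-squares-of-derivatives bound as the working hypothesis since it is what one actually verifies for Jacobi measures via \citep{Gau09}. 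Granting the one-dimensional bound $\frac1M\sup_{x\in I_\epsilon}\sum_{n=0}^{M}(\phi_n^{(j)})'(x)^2\leq C_j$, the displayed Cauchy--Schwarz estimate yields, for $x\in I_\epsilon^d$,
\eq
\frac1N|\partial_\alpha K_N(x,x)|\leq \frac2N\sqrt{K_{(M+1)^d}(x,x)}\Bigl(\textstyle\sum_{n\le M}(\phi_n^{(\alpha)})'(x_\alpha)^2\Bigr)^{1/2}\Bigl(\textstyle\prod_{j\neq\alpha}K_{M+1}^{(j)}(x_j,x_j)\Bigr)^{1/2},
\qe
and substituting $K_{M+1}^{(j)}(x_j,x_j)\leq C M$, $\sum_{n\le M}(\phi_n^{(\alpha)})'(x_\alpha)^2\leq C_\alpha M$, $(M+1)^d\leq C N$ gives $\frac1N|\partial_\alpha K_N(x,x)|\leq C' N^{-1}\sqrt{N}\,\sqrt{M}\,(M^{(d-1)/2})=C' N^{-1}\sqrt{N}\,M^{d/2}\leq C'' $ since $M^{d/2}\leq N^{1/2}$. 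Summing over $\alpha$ bounds $\frac1N\|\nabla K_N(x,x)\|$ uniformly in $N$ and in $x\in I_\epsilon^d$, which is exactly Assumption~\ref{a:regularity assumption easy}(b) for the product measure.

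The main obstacle I anticipate is the bookkeeping in the gradient step: the index set enumerated by $\frak b$ for a general $N$ is not a perfect cube, so one must carefully sandwich it between $\mathcal C_M$ and $\mathcal C_{M+1}$ and check that replacing $N$ by $(M+1)^d$ everywhere only costs a bounded multiplicative constant, using $M^d\le N<(M+1)^d$ and hence $(M+1)^d/N\le((M+1)/M)^d\to 1$. A secondary subtlety is that Assumption~\ref{a:regularity assumption easy}(b) as literally stated controls $\frac1M\|\nabla K_M^{(j)}(x,x)\|=\frac1M|\partial_x K_M^{(j)}(x,x)|$ rather than the sum-of-squares-of-derivatives $\frac1M\sum_n(\phi_n^{(j)})'(x)^2$; I would reconcile these either by noting that for the specific case of interest (Jacobi measures) both are known to be $O(M)$ from classical asymptotics, or by proving the equivalence via the identity $(\sum_n\phi_n\phi_n')^2\le K_M\cdot\sum_n(\phi_n')^2$ in one direction and a complementary argument (e.g. using that $\sum_{n\le M}(\phi_n')^2$ grows at most polynomially and the Markov-type bound $\|\phi_n'\|_{\infty,I_\epsilon}\le C n^2\|\phi_n\|_{\infty}$) in the other. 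In the write-up I would state the reduction cleanly, do the dimension-one-to-$d$ transfer for (a) and for the uniform positivity of $\frac1NK_N(x,x)$, then carry out the Cauchy--Schwarz gradient estimate, flagging the one-dimensional input precisely as it is needed.
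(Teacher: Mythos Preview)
Your handling of part (a) is fine, and your sandwich $\mathcal C_M\subset\Gamma_N\subset\mathcal C_{M+1}$ is the right bookkeeping. But the Cauchy--Schwarz route for part (b) has a real gap, which you in fact flag yourself: it requires a uniform bound on $\frac1M\sum_{n\le M}\bigl((\phi_n^{(j)})'(x)\bigr)^2$, and this is \emph{not} implied by Assumption~\ref{a:regularity assumption easy}(b). The inequality $\bigl|\sum_n\phi_n\phi_n'\bigr|\le\sqrt{K_M}\,\sqrt{\sum_n(\phi_n')^2}$ goes the wrong way, and there is no general converse, since the $\phi_n\phi_n'$ can cancel in $\partial_xK_M$ while the squares $(\phi_n')^2$ do not. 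Your fallback of ``taking the sum-of-squares bound as the working hypothesis'' changes the statement of the lemma.

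The paper avoids this entirely by a different decomposition. Fix $j$ and partition $\Gamma_N=\{\frak b(0),\dots,\frak b(N-1)\}$ into equivalence classes $[\bs k]$ according to the coordinates $(k_\alpha)_{\alpha\neq j}$. Because of the graded lexicographic order, each class meets $\Gamma_N$ in a contiguous segment $\{0,1,\dots,N_j([\bs k])\}$ of the $j$th coordinate. Summing $\partial_j\bigl(\phi_{\bs k}(x)^2\bigr)$ along such a fiber reconstitutes the derivative of a \emph{one-dimensional} Christoffel--Darboux kernel:
\[
\partial_j K_N(x,x)=\sum_{[\bs k]\in\Gamma_N/\sim}\ \prod_{\alpha\neq j}\phi_{k_\alpha}^{(\alpha)}(x_\alpha)^2\;\frac{\d}{\d x_j}K^{(j)}_{N_j([\bs k])+1}(x_j,x_j).
\]
Now Assumption~\ref{a:regularity assumption easy}(b) for $\mu_j$ applies \emph{directly} to the last factor, giving $\bigl|\frac{\d}{\d x_j}K^{(j)}_{m}(x_j,x_j)\bigr|\le Cm\le C(M+1)$ since $N_j([\bs k])\le M$. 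The remaining sum is bounded by the sum over all of $\mathcal C_{M+1}/\!\sim$, which factors as $\prod_{\alpha\neq j}K^{(\alpha)}_{M+1}(x_\alpha,x_\alpha)$; each factor is $O(M)$ on $I_\epsilon$ by Theorem~\ref{Totik asymp unif}. Hence $|\partial_j K_N(x,x)|\le C(M+1)^d\le C'N$. The key idea you are missing is this fiber decomposition, which expresses the multivariate derivative as a weighted sum of genuine one-dimensional kernel derivatives rather than routing through $\sum_n(\phi_n')^2$.
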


\begin{proof}
We decompose the set $\Gamma_N = \{\frak b(0),\ldots,\frak b(N-1)\}\subset \N^d$ in a
convenient way. To do so, set $\sigma_j(\bs
k)=(k_1,\ldots,k_{j-1},k_{j+1},\ldots,k_d)$ and say that $\bs k\sim \bs \ell$ if
and only if $\sigma_j(\bs k)=\sigma_j( \bs \ell)$, that is, they have same
coordinates except maybe the $j$th one. We denote by $[\bs k]$ the equivalence
class under this relation. Set $N_j([\bs k])=\max\{\ell_j : \bs \ell\in[\bs
k]\cap\Gamma_N\}$. Using the notation introduced in \eqref{e:mope} and \eqref{CD kernel
  split}, it comes
\begin{align}
\partial_j K_N(x,x) & =2\sum_{\bs k=\frak b(0)}^{\frak b(N-1)}\phi_{k_j}^{(j)}(x_j)\frac{\d}{\d x_j}\phi_{k_j}^{(j)}(x_j)\prod_{\alpha\neq j}\phi_{k_\alpha}^{(\alpha)}(x_\alpha)^2\nonumber\\
& = 2\sum_{[\bs k] \in \Gamma_N/\sim}\;\prod_{\alpha\neq j}\phi^{(\alpha)}_{k_\alpha}(x_\alpha)^2 \sum_{k_j=0}^{N_j([\bs k])}\phi_{k_j}^{(j)}(x_j)\frac{\d}{\d x_j}\phi_{k_j}^{(j)}(x_j)\nonumber\\
&= \sum_{[\bs k] \in \Gamma_N/\sim}\;\prod_{\alpha\neq j}\phi^{(\alpha)}_{k_\alpha}(x_\alpha)^2 \, \frac{\d}{\d x_j} \left[ K^{(j)}_{N_j([\bs k])+1}(x_j,x_j)\right].\label{e:boundingTool}
\end{align}
Let now $\epsilon>0$. Since $\mu_j$ satisfies Assumption~\ref{a:regularity
  assumption easy}, there exists $C>0$ such that for all $x\in I_\epsilon$ and $n\in\mathbb{N}$,
$$ \left\vert \frac{\d}{\d x} \left[ K^{(j)}_{n}(x,x)\right]\right\vert \leq Cn. $$
Let $M=\lfloor N^{1/d}\rfloor$, so that $\Gamma_N\subset \mathcal{C}_{M+1}$, see
\eqref{def CM}. Thus, $N_j([\bs k])\leq M$ for all $\bs k\in \Gamma_N$. By \eqref{e:boundingTool},
\begin{eqnarray*}
\left\vert \partial_j K_N(x,x)\right\vert &\leq& C  (M+1) \sum_{[\bs k] \in
  \Gamma_N/\sim}\;\prod_{\alpha\neq j}\phi^{(\alpha)}_{k_\alpha}(x_\alpha)^2\\
&\leq&  C (M+1) \sum_{[\bs k] \in \mathcal{C}_{M+1}}\;\prod_{\alpha\neq
       j}\phi^{(\alpha)}_{k_\alpha}(x_\alpha)^2\\
&=& C (M+1) \prod_{\alpha\neq j} K_{M+1}^{(\alpha)}(x_\alpha,x_\alpha).
\end{eqnarray*}
Hence
$$
\frac 1N \left\vert \partial_j K_N(x,x)\right\vert \leq C \frac{M+1}{M}\prod_{\alpha\neq j} \frac{1}{M}K_{M+1}^{(\alpha)}(x_\alpha,x_\alpha),
$$
\rev{and the lemma follows with Theorem~\ref{Totik asymp unif}.}
\end{proof}

\begin{lemma}
\label{Bound derivative Jacobi} Let $\alpha,\beta>-1$, then the measure
$$
(1-x)^\alpha (1+x)^\beta \IND_I(x)\d x
$$
satisfies Assumption~\ref{a:regularity assumption easy}.
\end{lemma}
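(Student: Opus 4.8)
Part (a) of Assumption~\ref{a:regularity assumption easy} is immediate: the density $\omega(x)=(1-x)^\alpha(1+x)^\beta$ is continuous and strictly positive on $(-1,1)$. So the whole content is part (b), and my plan is to derive one \emph{exact} identity for $\partial_x K_N(x,x)$ and then bound it term by term on $I_\epsilon$. Write $\phi_k$ for the orthonormal Jacobi polynomials; each $\phi_k$ is a scalar multiple of the classical Jacobi polynomial $P_k^{(\alpha,\beta)}$, hence solves the Jacobi differential equation
\[
(1-x^2)\,\phi_k''(x)+\big(\beta-\alpha-(\alpha+\beta+2)x\big)\,\phi_k'(x)+\lambda_k\,\phi_k(x)=0,\qquad \lambda_k=k(k+\alpha+\beta+1).
\]

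I would start from the confluent Christoffel--Darboux formula $K_N(x,x)=a_{N-1}\big(\phi_N'(x)\phi_{N-1}(x)-\phi_N(x)\phi_{N-1}'(x)\big)$, differentiate it in $x$ (the $\phi_N'\phi_{N-1}'$ terms cancel, leaving $a_{N-1}(\phi_N''\phi_{N-1}-\phi_N\phi_{N-1}'')$), and substitute the differential equation to eliminate the second derivatives. Using $\lambda_N-\lambda_{N-1}=2N+\alpha+\beta$ and recognising the Wronskian again, this produces the exact identity
\[
(1-x^2)\,\partial_x K_N(x,x)=-\big(\beta-\alpha-(\alpha+\beta+2)x\big)\,K_N(x,x)-a_{N-1}\,(2N+\alpha+\beta)\,\phi_N(x)\phi_{N-1}(x).
\]
It then remains to show the right-hand side is $O(N)$ uniformly on $I_\epsilon$. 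On $I_\epsilon$ the factor $(1-x^2)^{-1}$ is bounded, the polynomial $\beta-\alpha-(\alpha+\beta+2)x$ is bounded on $I$, and $\sup_N a_{N-1}<\infty$ since the Jacobi recurrence coefficients are given by an explicit rational expression in $N$ converging to $1/2$. There remain two ingredients, both consequences of the uniform bound $M(\epsilon):=\sup_k\sup_{x\in I_\epsilon}|\phi_k(x)|<\infty$: namely $K_N(x,x)=\sum_{k=0}^{N-1}\phi_k(x)^2\le M(\epsilon)^2 N$, and $|\phi_N(x)\phi_{N-1}(x)|\le M(\epsilon)^2$ on $I_\epsilon$. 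Combining everything gives $\tfrac1N\sup_{x\in I_\epsilon}|\partial_x K_N(x,x)|\le C(\epsilon)$ for all $N$, which is exactly Assumption~\ref{a:regularity assumption easy}(b).

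The only nontrivial input is the bound $M(\epsilon)<\infty$, i.e.\ the uniform boundedness of the individual orthonormal Jacobi polynomials on compact subsets of $(-1,1)$; this is where the Jacobi structure is genuinely used and cannot be replaced by the Nevai-class hypothesis alone. To get it I would invoke Szeg\H{o}'s interior asymptotics for $P_k^{(\alpha,\beta)}(\cos\theta)$, which are uniform for $\theta$ in compact subsets of $(0,\pi)$ and of size $k^{-1/2}$, together with the classical norm estimate $\|P_k^{(\alpha,\beta)}\|_{L^2(\omega)}$ of order $k^{-1/2}$; dividing, $\phi_k=P_k^{(\alpha,\beta)}/\|P_k^{(\alpha,\beta)}\|_{L^2(\omega)}$ is $O(1)$ uniformly on $I_\epsilon$, uniformly in $k$. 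Everything else is elementary manipulation of the Christoffel--Darboux kernel and the Jacobi ODE, so I expect this asymptotic input, and keeping track of its uniformity, to be the only delicate point. (Combined with Lemma~\ref{l:separability}, this lemma then yields Proposition~\ref{Jacobi = regular}.)
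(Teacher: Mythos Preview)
Your proof is correct and takes a genuinely different route from the paper's. The paper writes $\partial_x K_N(x,x)=2\sum_{k=1}^{N-1}\phi_k\phi_k'$, uses the Jacobi derivative identity $(\phi_k^{(\alpha,\beta)})'=\tfrac{u_{\alpha,\beta}}{2}\sqrt{k(k+\alpha+\beta+1)}\,\phi_{k-1}^{(\alpha+1,\beta+1)}$ to turn this into a sum of products $\phi_k^{(\alpha,\beta)}\phi_{k-1}^{(\alpha+1,\beta+1)}$, substitutes the strong interior asymptotics for each factor, and is then left with an oscillatory sum of the type $\sum_{k=1}^{N-1}k\sin\big((2k+\alpha+\beta+1)\theta-\pi(\alpha+\tfrac12)\big)$, which it bounds by differentiating a geometric series. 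You instead exploit the Wronskian structure: starting from the confluent Christoffel--Darboux formula and the Jacobi ODE you derive the closed identity
\[
(1-x^2)\,\partial_x K_N(x,x)=-\big(\beta-\alpha-(\alpha+\beta+2)x\big)K_N(x,x)-a_{N-1}(2N+\alpha+\beta)\,\phi_N(x)\phi_{N-1}(x),
\]
so that the only estimates needed are $K_N(x,x)=O(N)$ and $\phi_N,\phi_{N-1}=O(1)$ uniformly on $I_\epsilon$. Both approaches rely on the same analytic input---uniform boundedness of $\phi_k$ on compacts of $(-1,1)$, i.e.\ Szeg\H{o}'s interior asymptotics---but your argument is shorter and more robust: it never unpacks the sum over $k$ and sidesteps the trigonometric summation entirely. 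The paper's approach, on the other hand, gives slightly more information (an explicit leading oscillatory term), which is not needed here.
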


\begin{proof} Let $\epsilon>0$ be fixed. For convenience, Section~\ref{s:reductionToProba} allows
  us to work with the probability measure
$$
\mu^{(\alpha,\beta)}(\d x)=\omega^{(\alpha,\beta)}(x) \d x,\qquad \omega^{(\alpha,\beta)}(x)=\frac1{c_{\alpha,\beta}}(1-x)^\alpha(1+x)^\beta,
$$
where the normalization constant reads
$$
c_{\alpha,\beta}=
2^{\alpha+\beta+1}\frac{\Gamma(\alpha+1)\Gamma(\beta+1)}{\Gamma(\alpha+\beta+1)}
$$
and $\Gamma$ is the Euler Gamma function.

Denote by $(\phi_n^{(\alpha,\beta)})_{n\in\N}$ the associated orthonormal polynomials. They satisfy
$$
\phi_n^{(\alpha,\beta)}(x)=\frac{P_n^{(\alpha,\beta)}(x)}{\sqrt{h_n^{(\alpha,\beta)}}},
$$
where the $P_n^{(\alpha,\beta)}$'s are the Jacobi polynomials (we refer to \citep{Sze75} for definitions and properties) and
$$
h_n^{(\alpha,\beta)}=\|P_n^{(\alpha,\beta)}\|^2_{L^2(\mu^{(\alpha,\beta)})}=\frac1{n! (\alpha+\beta+2n+1)}\frac{\Gamma(\alpha+\beta+1)\Gamma(\alpha+n+1)\Gamma(\beta+n+1)}{\Gamma(\alpha+1)\Gamma(\beta+1)\Gamma(\alpha+\beta+n+1)},
$$
and moreover
\eq
\label{jacobi derivative}
(\phi_n^{(\alpha,\beta)})'=\frac{u_{\alpha,\beta}}2\sqrt{n(n+\alpha+\beta+1)}\ \phi_{n-1}^{(\alpha+1,\beta+1)},\qquad u_{\alpha,\beta}=\sqrt{\frac{(\alpha+\beta+1)(\alpha+\beta+2)}{(\alpha+1)(\beta+1)}}.
\qe
This yields
\begin{align}
\label{KN' Jacobi}
\frac\d{\d x}K_N(x,x)& =2\sum_{k=1}^{N-1}\phi_k^{(\alpha,\beta)}(x)(\phi_k^{(\alpha,\beta)})'(x)\nonumber\\
& =u_{\alpha,\beta}\sum_{k=1}^{N-1}\sqrt{k(k+\alpha+\beta+1)}\ \phi_k^{(\alpha,\beta)}(x) \phi_{k-1}^{(\alpha+1,\beta+1)}(x).
\end{align}
By \citep{KMVV04}, we have as $k\to\infty$, uniformly in $x=\cos\theta\in I_\epsilon$,
\eq
\label{asympt Jacobi}
\phi_k^{(\alpha,\beta)}(\cos\theta)=\sqrt{\frac{2}{ \omega^{(\alpha,\beta)}(x) \pi\sqrt{1-x^2}}} \ \cos\left(\big(k+\frac12(\alpha+\beta+1)\big)\theta-\frac\pi2(\alpha +\frac12)\right)+O(1/k).
\qe
As a consequence, we obtain in the same asymptotic regime,
\eq
\label{asympt Jacobi prime}
\phi_{k-1}^{(\alpha+1,\beta+1)}(\cos\theta)=\sqrt{\frac{2}{\omega^{(\alpha,\beta)}(x) \pi\sqrt{1-x^2}}} \ \sin\left(\big(k+\frac12(\alpha+\beta+1)\big)\theta-\frac\pi2(\alpha +\frac12)\right)+O(1/k).
\qe
Now \eqref{asympt Jacobi} implies that the
$P_k^{(\alpha,\beta)}(x)$'s are bounded uniformly for $x\in I$ and $k\in\N$. Using moreover that $2\sin(u)\cos(u)=\sin(2u)$ and combining \eqref{KN' Jacobi}--\eqref{asympt Jacobi prime}, we obtain for some $C_1,C_2>0$ that
$$
\sup_{x\in I_\epsilon}\left|\frac\d{\d x}K_N(x,x)\right|\leq C_1\sup_{x\in I_\epsilon}\left|\sum_{k=1}^{N-1} k \sin\left(\big(2k+\alpha+\beta+1\big)\theta-\pi(\alpha +\frac12)\right)\right| +C_2
$$
where we recall the relation $x=\cos\theta$. Next, we write
$$
\left|\sum_{k=1}^{N-1} k \sin\left(\big(2k+\alpha+\beta+1\big)\theta-\pi(\alpha +\frac12)\right)\right|\leq \left|\sum_{k=1}^{N-1} k e^{i((2k+\alpha+\beta+1)\theta-\pi(\alpha +\frac12))}\right|^{1/2}
$$
and then
\begin{align}
\label{sum trigo}
\sum_{k=1}^{N-1} k e^{i((2k+\alpha+\beta+1)\theta-\pi(\alpha +\frac12))}
& =e^{i((\alpha+\beta+1)\theta-\pi(\alpha +\frac12))}\sum_{k=1}^{N-1} k e^{2ik\theta} \nonumber\\
&= \frac{1}{2i}e^{i((\alpha+\beta+1)\theta-\pi(\alpha +\frac12))}\frac{\d}{\d\theta}\sum_{k=0}^{N-1} e^{2ik\theta}\nonumber\\
&= \frac{1}{2i}e^{i((\alpha+\beta+1)\theta-\pi(\alpha +\frac12))}\frac{\d}{\d\theta}\Big(\frac{e^{i(N-1)\theta}\sin (N\theta)}{\sin(\theta)}\Big).
\end{align}
Since the absolute value of the right hand side of \eqref{sum trigo} is bounded by $CN/\sin^2(\theta)$ for some $C>0$ independent on $N$ and $\theta$, the lemma follows.

\end{proof}

Lemmas~\ref{l:separability} and \ref{Bound derivative Jacobi} combined yield
Proposition~\ref{Jacobi = regular}.

\subsection{Proof of Theorem~\ref{DPPMC Th3}}
\label{proofSelfnormal}
\begin{proof}[Proof of Theorem~\ref{DPPMC Th3}]
\rev{
We proceed in two steps. The first step is to prove a bivariate CLT for the vector $(\mathscr{I}_N(f), \mathscr{I}_N(1))$, and the second step is to apply the delta method to obtain a central limit theorem for the ratio.
}

\rev{
Let $t,v\in\mathbb{R}$. We apply Theorem~\ref{DPPMC Th2} to $tf + vh$, where $h$ is any function in $\mathscr C$ that is equal to $1$ on $I_\epsilon^d$. This leads to
  $$\sqrt{N^{1+1/d}}\left(t\mathscr{I}_N(f) +v\mathscr{I}_N(1) - t\int f\omega_u\d x - vZ\right) \rightarrow \mathcal N(0,Z^2\Omega_{tf+vh,\omega}^2),$$
  where
  $$ \Omega_{tf+vh,\omega}^2 = t^2\Omega_{f,\omega}^2 + v^2 \Omega_{1,\omega}^2 + 2tv c_{f,\omega}.$$
  By the Cr\'amer-Wold theorem \cite[Corollary 5.5]{Kal06}, we obtain a bivariate CLT
  $$
  \sqrt{N^{1+1/d}}\left[\begin{pmatrix} \mathscr{I}_N(f)\\ \mathscr{I}_N(1)\end{pmatrix} - \begin{pmatrix}\int f \omega_u \d x\\ Z\end{pmatrix}\right] \rightarrow \mathcal N\left(0,Z^2\begin{pmatrix} \Omega_{f,\omega}^2 & c_{f,\omega} \\ c_{f,\omega} & \Omega_{1,\omega}^2\end{pmatrix} \right).
  $$
  }
\rev{
Now we apply the delta method \cite[Section 7.1.3]{Sch12} to obtain
$$
\sqrt{N^{1+1/d}}\left[\frac{\mathscr{I}_N(f)}{\mathscr{I}_N(1)} - \int f\d \mu\right] \rightarrow \mathcal{N}\left(0, Z^2 \begin{pmatrix}\frac1Z &-\frac{\int f\omega_u\d x}{Z^2}\end{pmatrix} \begin{pmatrix} \Omega_{f,\omega}^2 & c_{f,\omega} \\ c_{f,\omega} & \Omega_{1,\omega}^2\end{pmatrix} \begin{pmatrix}\frac1Z \\-\frac{\int f\omega_u\d x}{Z^2}\end{pmatrix}\right),
$$
which is precisely \eqref{e:selfNormalizedWeightedCLT}.
}
\end{proof}

\section{CLT for multivariate OP Ensembles: proof of Theorem~\ref{th CLT general}}
\label{proof th1 section}

In this section we prove Proposition~\ref{variance general CLT easy}. As explained in Section
\ref{s:sketch proof CLT MOPE}, Theorem \ref{th CLT general} follows from this proposition.

\label{full proof}

\subsection{A useful representation of the covariance}

\begin{lemma} Let $\bv x_1,\ldots, \bv x_N$ be random variables drawn from a multivariate OP Ensemble with reference measure $\mu$. For any   multivariate polynomials $P,Q$, we have
\label{cov representation}
\[
\mathbb C{\rm ov}\left[\, \sum_{i=1}^NP(\bv x_i),\sum_{i=1}^NQ(\bv x_i)\right]=\sum_{n=0}^{N-1}\sum_{m=N}^\infty \langle P\phi_n,\phi_m\rangle\langle Q\phi_n,\phi_m\rangle,
\]
where $\langle \, \cdot \, , \, \cdot\, \rangle$ refers to the scalar product of $L^2(\mu)$.
\end{lemma}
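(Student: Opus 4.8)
The plan is to reduce the statement to the classical covariance identity for determinantal point processes, and then expand everything in the orthonormal family $(\phi_k)_{k\in\N}$. For the first step, I would apply \eqref{def k cor}--\eqref{def cor DPP} with $n=1$ and $n=2$: taking the test function $\phi(x)=P(x)Q(x)$ in the one-point formula and $\phi(x,y)=P(x)Q(y)$ in the two-point formula, and using that the two-point correlation function of a DPP with symmetric kernel $K_N$ is $\rho_2(x,y)=K_N(x,x)K_N(y,y)-K_N(x,y)^2$, one obtains
\[
\E\Big[\sum_{i}P(\bv x_i)Q(\bv x_i)\Big]=\int P(x)Q(x)K_N(x,x)\mu(\d x),
\]
\[
\E\Big[\sum_{i\neq j}P(\bv x_i)Q(\bv x_j)\Big]=\iint P(x)Q(y)\big(K_N(x,x)K_N(y,y)-K_N(x,y)^2\big)\mu(\d x)\mu(\d y).
\]
Since $\E\big[\sum_i P(\bv x_i)\big]\,\E\big[\sum_j Q(\bv x_j)\big]=\iint P(x)Q(y)K_N(x,x)K_N(y,y)\mu(\d x)\mu(\d y)$ by the one-point formula, and $\E\big[(\sum_i P(\bv x_i))(\sum_j Q(\bv x_j))\big]$ is the sum of the left-hand sides of the two displays above, the $K_N(x,x)K_N(y,y)$ contributions cancel and we are left with
\[
\Cov\Big[\sum_i P(\bv x_i),\sum_j Q(\bv x_j)\Big]=\int P(x)Q(x)K_N(x,x)\mu(\d x)-\iint P(x)Q(y)K_N(x,y)^2\mu(\d x)\mu(\d y).
\]

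For the second step, I would substitute $K_N(x,y)=\sum_{n=0}^{N-1}\phi_n(x)\phi_n(y)$ into both terms. The diagonal term becomes $\sum_{n=0}^{N-1}\int P(x)Q(x)\phi_n(x)^2\mu(\d x)=\sum_{n=0}^{N-1}\langle P\phi_n,Q\phi_n\rangle$; since $P\phi_n$ is a polynomial, it is a \emph{finite} linear combination of the $\phi_m$'s, so Parseval's identity in $L^2(\mu)$ gives $\langle P\phi_n,Q\phi_n\rangle=\sum_{m\geq0}\langle P\phi_n,\phi_m\rangle\langle Q\phi_n,\phi_m\rangle$. Expanding the square in the off-diagonal term, $K_N(x,y)^2=\sum_{n,m=0}^{N-1}\phi_n(x)\phi_m(x)\phi_n(y)\phi_m(y)$, and integrating separately in $x$ and in $y$ yields $\iint P(x)Q(y)K_N(x,y)^2\mu(\d x)\mu(\d y)=\sum_{n,m=0}^{N-1}\langle P\phi_n,\phi_m\rangle\langle Q\phi_n,\phi_m\rangle$. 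Subtracting, the terms with $0\leq m\leq N-1$ cancel and exactly $\sum_{n=0}^{N-1}\sum_{m=N}^{\infty}\langle P\phi_n,\phi_m\rangle\langle Q\phi_n,\phi_m\rangle$ survives, which is the claim.

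I do not expect a genuine obstacle: every series appearing above is in fact a finite sum, because $P\phi_n$ and $Q\phi_n$ are polynomials, so no Fubini or convergence subtlety arises in exchanging sums and integrals. The only points deserving a word of justification are the precise form of the two-point correlation function $\rho_2=K_N(x,x)K_N(y,y)-K_N(x,y)^2$, which is immediate from \eqref{def cor DPP} and the symmetry of $K_N$, and the fact that the multivariate orthonormal polynomials span all polynomials so that Parseval applies to $P\phi_n$; this is clear from the Gram--Schmidt construction in Section~\ref{def MOP}.
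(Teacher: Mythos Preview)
Your proof is correct and follows essentially the same route as the paper: derive the standard DPP covariance identity from the one- and two-point correlation functions, then expand both terms in the orthonormal family $(\phi_m)$ and subtract so that the block $0\le m\le N-1$ cancels. The only cosmetic difference is that the paper states the covariance formula \eqref{cov def} directly rather than spelling out the $\rho_2$ computation, and it phrases your Parseval step as the finite expansion $P\phi_n=\sum_m\langle P\phi_n,\phi_m\rangle\phi_m$.
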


\begin{proof}
We  start from the standard formula
\begin{multline}
\label{cov def}
\mathbb C{\rm ov} \left[\, \sum_{i=1}^NP(\bv x_i),\sum_{j=1}^NQ(\bv x_i)\right]\\
=\int P(x)Q(x)K_N(x,x)\mu(\d x)-\iint P(x)Q(y)K_N(x,y)^2\mu(\d x)\mu(\d y),
\end{multline}
which follows from \eqref{def k cor}--\eqref{def cor DPP} with $n=1,2$ and that $K_N(x,y)$ is symmetric.
On the one hand, it  follows from the definition of $K_N$ that
\eq
\label{cov part 1}
\iint P(x)Q(y)K_N(x,y)^2\mu(\d x)\mu(\d y)=\sum_{n=0}^{N-1}\sum_{m=0}^{N-1}\langle P\phi_n,\phi_m\rangle\langle Q\phi_n,\phi_m\rangle.
\qe
On the other hand, by using the decomposition (where the sum is finite since $P$ is polynomial)
\[
P\phi_n=\sum_{m=0}^\infty\langle P\phi_n,\phi_m\rangle \phi_m
\]
together with the identity
\[
\int P(x)Q(x)K_N(x,x)\mu(\d x)=\sum_{n=0}^{N-1}\langle P\phi_n,Q\phi_n\rangle,
\]
we obtain
\eq
\label{cov part 2}
\int P(x)Q(x)K_N(x,x)\mu(\d x)=\sum_{n=0}^{N-1}\sum_{m=0}^\infty\langle P\phi_n,\phi_m\rangle\langle Q\phi_n,\phi_m\rangle.
\qe
Lemma \ref{cov representation} then follows by combining \eqref{cov def}, \eqref{cov part 1} and \eqref{cov part 2}.
\end{proof}

\subsection{Covariance asymptotics: the Chebyshev case}
\label{section Cov Cheby}

We first investigate the case of the product measure $\mu^{\otimes d}_{eq}$,
where $\mu_{eq}$ defined in \eqref{mueq} is the equilibrium measure of $I$.
Recalling the definition \eqref{Cheby cos}, the multivariate Chebyshev polynomials
\eq
\label{multivariate Cheby}
T_{\bs k}(x_1,\ldots,x_d)=T_{k_1}(x_1)\cdots T_{k_d}(x_d),\qquad \bs k=(k_1,\ldots k_d)\in\N^d,
\qe
satisfy the orthonormality conditions
$$
\int T_{\bs k}(x)T_{\bs \ell}(x)\mueqd(\d x)=\delta_{\bs k \bs \ell},\qquad \bs k ,\bs \ell\in\N^d.
$$
We shall see that the family $(T_{\bs k})_{\bs k\in\N^d}$ diagonalizes the
covariance structure associated with our point process.

\begin{proposition}
\label{cov cheby prop}
Let $\bv x_1^*,\ldots,\bv x_N^*$ be drawn according to the multivariate OP
Ensemble associated with $\mueqd$. Then, given any multi-indices $\bs k=(k_1,\ldots,k_d)\in\N^d$ and $\bs \ell=(\ell_1,\ldots,\ell_d)\in\N^d$, we have
 \[
\lim_{N\rightarrow\infty}\frac{1}{N^{1-1/d}}\,\mathbb C{\rm ov}\left[\,\sum_{i=1}^NT_{\bs k}(\bv x_i^*),\sum_{i=1}^NT_{\bs \ell}(\bv x_i^*)\right]=\begin{cases} \displaystyle
\frac{1}{2}(k_1+\,\cdots\,+k_d) & \mbox{if }\bs k = \bs\ell,\\
\displaystyle 0 & \mbox{if } \bs k\neq \bs \ell.
 \end{cases}
\]
\end{proposition}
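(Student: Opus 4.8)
The plan is to apply Lemma~\ref{cov representation} with $P=T_{\bs k}$ and $Q=T_{\bs\ell}$ and then carry out an asymptotic analysis of the resulting double sum, using the explicit multiplication rule for Chebyshev polynomials. Since the reference measure is $\mueqd$ we have $\phi_n=T_{\frak b(n)}$, and the product structure gives $\langle T_{\bs k}\phi_n,\phi_m\rangle=\prod_{j=1}^d\langle T_{k_j}T_{n_j},T_{m_j}\rangle$ with $\bs n=\frak b(n)$, $\bs m=\frak b(m)$ and scalar product in $L^2(\mu_{eq})$. Because $T_aT_b=\tfrac1{\sqrt2}(T_{a+b}+T_{|a-b|})$ for $1\leq a<b$ (with obvious modifications when $a\in\{0,b\}$ or $a=b$), each univariate factor $\langle T_{k_j}T_{n_j},T_{m_j}\rangle$ vanishes unless $m_j\in\{n_j+k_j,|n_j-k_j|\}$, in which case it equals $1$ or $\tfrac1{\sqrt2}$; moreover it is symmetric in its three indices. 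Lemma~\ref{cov representation} then yields
\[
\Cov\Big[\sum_{i=1}^N T_{\bs k}(\bv x_i^*),\,\sum_{i=1}^N T_{\bs\ell}(\bv x_i^*)\Big]=\sum_{\bs n\in\Gamma_N}\ \sum_{\bs m\in\N^d\setminus\Gamma_N}\ \prod_{j=1}^d\langle T_{k_j}T_{n_j},T_{m_j}\rangle\,\langle T_{\ell_j}T_{n_j},T_{m_j}\rangle ,
\]
where $\Gamma_N=\{\frak b(0),\ldots,\frak b(N-1)\}$. With $M=\lfloor N^{1/d}\rfloor$ we have $\mathcal C_M\subseteq\Gamma_N\subseteq\mathcal C_{M+1}$, $N^{1-1/d}\sim M^{d-1}$, and the partial layer $\mathcal L_N:=\Gamma_N\setminus\mathcal C_M$ lies inside $\{\bs n:\max_j n_j=M\}$ with $|\mathcal L_N|=N-M^d=O(M^{d-1})$.

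For the off-diagonal case $\bs k\neq\bs\ell$: if $k_{j_0}\neq\ell_{j_0}$, a nonzero summand requires $m_{j_0}\in\{n_{j_0}\pm k_{j_0}\}\cap\{n_{j_0}\pm\ell_{j_0}\}$, which forces $n_{j_0}\leq\max(k_{j_0},\ell_{j_0})$, i.e.\ $n_{j_0}$ is confined to $O(1)$ values. On the other hand $\bs m\notin\Gamma_N$ implies $\bs m\notin\mathcal C_M$, hence $m_{j^*}\geq M$ for some $j^*$; since $m_{j^*}\leq n_{j^*}+\max(k_{j^*},\ell_{j^*})$ this forces $n_{j^*}\geq M-O(1)$, so for $N$ large $j^*\neq j_0$. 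Thus the admissible $\bs n$ have one coordinate pinned to $O(1)$ values, one coordinate pinned to $O(1)$ values near $M$, and at most $d-2$ free coordinates in $\{0,\ldots,M\}$; there are $O(M^{d-2})$ of them, each contributing $O(1)$ over the $O(1)$ admissible $\bs m$. Hence the covariance is $O(M^{d-2})=o(N^{1-1/d})$, giving the stated limit $0$.

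For the diagonal case $\bs k=\bs\ell$ the summand is $\prod_j\langle T_{k_j}T_{n_j},T_{m_j}\rangle^2\geq0$. The dominant contribution comes from configurations where $\bs m$ leaves $\Gamma_N$ through a single coordinate $j$: there $m_j=n_j+k_j$ with $n_j$ in a window of length $\sim k_j$ just below $M$, contributing the factor $\tfrac12$ when $k_j\geq1$, while the remaining $d-1$ coordinates are summed over $\{n_l\pm k_l\}$, contributing $\prod_{l\neq j}\|T_{k_l}T_{n_l}\|^2$, which equals $1$ for all but finitely many $n_l$. Summing $n_j$ over its window, the free coordinates over $\{0,\ldots,M-1\}$ (which gives $M^{d-1}(1+o(1))$), performing an inclusion--exclusion over the set of coordinates at which $m\geq M$ (configurations using two or more such coordinates contribute only $O(M^{d-2})$), and summing over $j$, one is led to $\tfrac12(k_1+\cdots+k_d)M^{d-1}(1+o(1))$; dividing by $N^{1-1/d}$ then produces the claimed limit.

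The main obstacle is to make this last step rigorous in view of the fact that $\Gamma_N$ is not the cube $\mathcal C_M$: a candidate $\bs m$ with $\max_j m_j=M$ may or may not belong to the partial layer $\mathcal L_N$, and the number of boundary configurations affected is itself of order $M^{d-1}$, the same order as the main term. A clean route is to first treat the subsequence $N=M^d$, where $\Gamma_N=\mathcal C_M$ exactly and the computation above is unambiguous, and then to show that the general-$N$ correction is $o(M^{d-1})$. For this I would write $\sum_{\bs m\notin\Gamma_N}=\sum_{\bs m:\max m_j\geq M+1}+\sum_{\bs m:\max m_j=M,\ \bs m\notin\mathcal L_N}$ and verify that the dependence on the precise cut defining $\mathcal L_N$ cancels: for a generic $\bs n$ near the top (exactly one coordinate $j_0$ equal to $M$, the others $\leq M-1-\|\bs k\|_\infty$) the candidate obtained by the $+$ branch at $j_0$ has grade $M+k_{j_0}>M$ and hence lies outside $\Gamma_N$, while the $-$ branch at $j_0$ lies in $\mathcal C_M\subset\Gamma_N$; these two carry the same coefficient $\tfrac12\prod_{l\neq j_0}\|T_{k_l}T_{n_l}\|^2$ and their net effect on the leading term cancels. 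The non-generic $\bs n$ (two or more coordinates within $\|\bs k\|_\infty$ of $M$), together with the staircase-shaped frontier of the cut inside the layer, form a set of cardinality $O(M^{d-2})$ and contribute only $o(M^{d-1})$. The steps preceding this—the multiplication rule, the off-diagonal count, and the extraction of the leading term—are essentially routine once the explicit Chebyshev identities are in place.
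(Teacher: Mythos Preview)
Your overall strategy matches the paper's: apply Lemma~\ref{cov representation}, factor via the Chebyshev multiplication rule, reduce the diagonal case first to the subsequence $N=M^d$, and control what happens for general $N$. Your off-diagonal argument is correct and essentially coincides with the paper's (which organizes it through subsets $P,Q\subset S=\{j:k_j\neq0\}$ but reaches the same $O(M^{d-2})$ bound). Your inclusion--exclusion computation along $N=M^d$ is exactly the content of the paper's Lemma~\ref{estimates M}.

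The genuine difference, and the gap, is in the passage from $N=M^d$ to arbitrary $N$. The paper does not attempt a cancellation argument; instead it proves a monotonicity lemma (Lemma~\ref{non-decreasing}): for each $P\subset S$, the count $|\mathbb A_N[P,P]|$ is non-decreasing in $N$ once $N$ is large, which immediately sandwiches the sum between its values at $M^d$ and $(M+1)^d$. Your proposed cancellation breaks down precisely at the point the paper's monotonicity proof becomes delicate. If $\bs n\in\mathcal L_N$ has its unique maximal coordinate at some $j_0$ with $k_{j_0}=0$ (i.e.\ $j_0\notin S$), then there is no $\pm$ choice at $j_0$: both ``branches'' give $m_{j_0}=n_{j_0}=M$, so every candidate $\bs m$ stays on the layer $\{\max_j m_j=M\}$ rather than jumping to grade $M+k_{j_0}>M$ or falling into $\mathcal C_M$. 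Whether such an $\bs m$ lies in $\Gamma_N$ then depends on the lexicographic cut, and the number of such $\bs n$ is a priori of order $|\mathcal L_N|$, which can be $\sim dM^{d-1}$ --- the same order as the main term. Your sentence about the ``staircase-shaped frontier of the cut'' being $O(M^{d-2})$ is exactly what would be needed here, but it is asserted rather than argued; establishing it requires a careful walk through the lexicographic order on the layer, which is precisely the case ``$j_*\notin S$'' in the paper's proof of Lemma~\ref{non-decreasing}. So your route is viable, but the step you flag as ``the main obstacle'' is not yet closed; the paper's monotonicity trick avoids the cancellation bookkeeping altogether.
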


As a warm-up, let us first prove the proposition when $d=1$.

\begin{proof}[Proof of Proposition \ref{cov cheby prop} when $d=1$] Throughout
  this proof, $\langle \cdot,\cdot\rangle$ denotes the inner product in
  $L^2(\mueqd)$. For every $k,\ell\in\N$, Lemma \ref{cov representation} provides
\eq
\label{cov cheby d=1}
\mathbb C{\rm ov}\left[\,\sum_{i=1}^NT_{ k}(\bv x_i^*),\sum_{i=1}^NT_{ \ell}(\bv x_i^*)\right]=\sum_{n=0}^{N-1}\sum_{m=N}^\infty \langle T_{ k}T_{ n},T_{ m}\rangle\langle T_{ \ell}T_{ n},T_{ m}\rangle.
\qe
First, notice that if $k$ or $\ell$ is zero, then the right-hand side of \eqref{cov cheby d=1} vanishes because $\langle T_n,T_m\rangle=\delta_{nm}$, and hence we can assume both $k,\ell$ are non-zero.  Next,  \eqref{Cheby cos}  yields the multiplication formula
\eq
\label{product Cheby}
T_kT_n=\frac{1}{\sqrt 2}\, T_{n+k}\,\bv 1_{kn\neq 0}+ \left(\frac{1}{\sqrt{2}}\right)^{\bv 1_{nk\neq 0}\bv 1_{n\neq k}}T_{|n-k|},\qquad  k,n\in\N.
\qe
Combined with the orthonormality relations, this yields for any $n,m\in\N$  and $k>0$
\eq
\label{product Cheby 2}
\langle T_{ k}T_{ n},T_{ m}\rangle\\
= \frac 1{\sqrt 2} \, \bs 1_{n+k=m}\bs 1_{n\neq 0}+\left(\frac{1}{\sqrt 2}\right)^{\bs 1_{n\neq 0}\bs 1_{n\neq k}} \bs 1_{|n-k|=m}.
\qe
Hence, if $n,m\in\N$  moreover  satisfy $n<m$ and $m>\max(k,\ell)$, then we have
\eq
\label{Cheby d=1 mult}
\langle T_{ k}T_{ n},T_{ m}\rangle\langle T_{ \ell}T_{ n},T_{ m}\rangle=  \frac{1}{2} \bs 1_{n\neq0} \bs 1_{n+k=m}\bs 1_{\ell+n=m}.
\qe
By plugging \eqref{Cheby d=1 mult} into \eqref{cov cheby d=1},  we obtain  for every $N> \max(k,\ell)$,
\begin{align*}
\mathbb C{\rm ov}\left[\,\sum_{i=1}^NT_{ k}(\bv x_i^*),\sum_{i=1}^NT_{ \ell}(\bv x_i^*)\right]
& =\frac 12 \sum_{n=1}^{N-1}\sum_{m=N}^\infty\bs 1_{k+n=m}\bs 1_{\ell+n=m}\\
&  = \frac 12 k \, \bs 1_{k= \ell},
 \end{align*}
and the proposition follows when $d=1$.
\end{proof}

We now provide a proof for the higher-dimensional case. We also use the multiplication formula \eqref{product Cheby} in an essential way, although the setting is  more involved. We recall that we introduced the bijection $\frak b:\N\to\N^d$ associated with the graded lexicographic order in Section \ref{graded lex order}.

\begin{proof}[Proof of Proposition \ref{cov cheby prop} when $d\geq 2$] Fix multi-indices $\bs k=(k_1,\ldots,k_d)\in\N^d$ and $\bs \ell=(\ell_1,\ldots,\ell_d)\in\N^d$, and also set
\[
S=\big\{j: \; k_j\neq 0\big\},\qquad  S'=\big\{j: \; \ell_j\neq 0\big\}.
\]
Thanks to Lemma \ref{cov representation}, we can write
\eq
\label{cov tcheby}
\mathbb C{\rm ov}\left[\,\sum_{i=1}^NT_{\bs k}(\bv x_i^*),\sum_{i=1}^NT_{\bs \ell}(\bv x_i^*)\right]= \sum_{(\bs n,\bs m)\in\mathbb A_N} \langle T_{\bs k}T_{\bs n},T_{\bs m}\rangle\langle T_{\bs \ell}T_{\bs n},T_{\bs m}\rangle,
\qe
where we introduced for convenience the set
\eq
\label{def AN}
\mathbb A_N=\Big\{(\bs n,\bs m)\in\N^d\times\N^d : \quad \bs n\leq \frak b(N-1),\quad \bs m\geq \frak b(N)\Big\}.
\qe
Next, using \eqref{multivariate Cheby}, the orthonormality
relations for the Chebyshev polynomials and \eqref{product Cheby 2}, we  obtain
\begin{align}
\label{split formula}
\langle T_{\bs k}T_{\bs n},T_{\bs m}\rangle = &\; \; \langle T_{k_1}T_{n_1},T_{m_1}\rangle_{L^2(\mu_{eq})}\cdots \langle T_{k_d}T_{n_d},T_{m_d}\rangle_{L^2(\mu_{eq})}\\
 \; = &\quad \Big(\prod_{j\notin S}\bv 1_{n_j=m_j}\Big) \sum_{P \subset S}
\left(\frac{1}{\sqrt{2}}\right)^{| P| +|\{j\in S\setminus P :\; n_j\neq 0, \, n_j\neq k_j\}|}
\nonumber\\
\label{mult formula d>1}
 & \times \Big(\prod_{j\in P}\bv 1_{n_j+k_j=m_j}\bv 1_{n_j\neq 0}\Big)
 \Big(\prod_{j\in S\setminus P}\bv 1_{|n_j-k_j|=m_j}\Big),
\end{align}
where $|A|$ stands for the cardinality of the set $A$.

First, notice that if $S\neq S'$ then the right hand side of
\eqref{cov tcheby} vanishes.  Indeed, if $S\neq S'$, then there exists
$\alpha\in\{1,\ldots,d\}$ such that $k_\alpha=0$ and $\ell _\alpha\neq
0$ (or the other way around, but the argument is symmetric). It then
follows from \eqref{mult formula d>1} that $\langle T_{\bs k}T_{\bs
  n},T_{\bs m}\rangle$ vanishes except if $n_\alpha=m_\alpha$, and
moreover that $\langle T_{\bs \ell}T_{\bs n},T_{\bs m}\rangle$
vanishes except if  $|n_\alpha\pm \ell_\alpha |=m_\alpha$. Since
$\ell_\alpha\neq 0$, it holds  $\langle T_{\bs k}T_{\bs n},T_{\bs
  m}\rangle\langle T_{\bs \ell}T_{\bs n},T_{\bs m}\rangle=0$   for
every $(\bs n,\bs m)\in\N^d\times\N^d$, and our claim
follows. Moreover, because $(\bs n,\bs m)\in\mathbb A_N$ yields the
existence of $\alpha\in\{1,\ldots,d\}$ such that $n_\alpha<m_\alpha$,
one can see from \eqref{split formula} that $\langle T_{\bs
  k}T_{\bs n},T_{\bs m}\rangle$ vanishes for every $(\bs n,\bs
m)\in\mathbb A_N$ if $\bs k=(0,\ldots,0)$.  We
henceforth assume that $S=S'\neq \emptyset$, for the covariance not to be trivial.

By combining  \eqref{cov tcheby} with \eqref{mult formula d>1},   we obtain

\[
 \mathbb C{\rm ov}\left[\, \sum_{i=1}^NT_{\bs k}(\bv x_i^*),\sum_{i=1}^NT_{\bs \ell}(\bv x_i^*)\right] = \sum_{P,Q\subset S}\sum_{(\bs n,\bs m)\in \mathbb A_N[P,Q]}\left(\frac{1}{ \sqrt 2}\right)^{\sigma[P, Q](\bs n)},
\]
where we introduced the subsets
\begin{multline}
\label{def sets}
\mathbb A_N[P,Q]=
\Bigg\{(\bs n,\bs m)\in\mathbb A_N \;\Bigg| \;
\begin{matrix*}
n_j+k_j=m_j,\\
n_j+\ell_j=m_j,
\end{matrix*}
\quad
\begin{matrix*}
n_j\neq 0,\\
n_j\neq 0,
\end{matrix*}
\quad
\begin{matrix*}
\mbox{ if } j\in P\\
\mbox{ if } j\in Q
\end{matrix*} \; ;
\\
\begin{matrix*}
|n_j-k_j|=m_j,\\
|n_j-\ell_j|=m_j,
\end{matrix*}
\quad
\begin{matrix*}
\mbox{ if } j\in S\setminus P\\
\mbox{ if } j\in S\setminus Q
\end{matrix*}\; ;
\qquad
n_j=m_j,\quad \mbox{ if } j\notin S \;\Bigg\}
\end{multline}
and set for convenience
\begin{multline}
\label{def sigma}
\sigma[P,Q](\bs n)=| P | + | Q |
+|\big\{j\in S\setminus P :\; n_j\neq 0,\, n_j\neq k_j\big\}|\\+|\big\{j\in S\setminus Q :\; n_j\neq 0,\, n_j\neq \ell_j\big\}|.
\end{multline}

Notice from \eqref{def sets}  if $k_\alpha=\ell_\alpha\neq 0$ and
$\mathbb A_N[P,Q]\neq \emptyset$ then necessarily $\alpha\in P\cap Q$
or $\alpha\in (S\setminus P)\cap(S\setminus Q)$. In particular, if
$\bs k=\bs \ell$ then $\mathbb
  A_N[P,Q]=\emptyset$ unless $P=Q$. Thus,
\begin{multline}
\label{cov ugly sum}
 \mathbb C{\rm ov}\left[\, \sum_{i=1}^NT_{\bs k}(\bv x_i^*),\sum_{i=1}^NT_{\bs \ell}(\bv x_i^*)\right]  = \bs 1_{\bs k\, =\, \bs \ell}\sum_{P\subset S}\sum_{(\bs n,\bs m)\in \mathbb A_N[P,P]}\left(\frac{1}{ \sqrt 2}\right)^{\sigma[P,P](\bs n)}\\
 +\bs 1_{\bs k\, \neq\, \bs \ell}\sum_{P,Q\subset S}\sum_{(\bs n,\bs m)\in \mathbb A_N[P,Q]}\left(\frac{1}{ \sqrt 2}\right)^{\sigma[P,Q](\bs n)}.
\end{multline}
Our goal is now to show that for every $P,Q\subset S$ the following holds true. As $N\to\infty$,
if we assume  $\bs k=\bs \ell$, then
\begin{align}
\label{main contrib}
\sum_{(\bs n,\bs m)\in \mathbb A_N[P,P]}\left(\frac{1}{ \sqrt 2}\right)^{\sigma[P,P](\bs n)}
=\left(\frac{1}{2}\right)^{ |S|}\Big(\sum_{j\in P} k_j\Big)N^{1-1/d}+o(N^{1- 1/d}),
\end{align}
and, if instead $\bs k\neq \bs \ell$, then
\eq
\label{residual contrib}
 \big|\mathbb A_N[P,Q]\big|=o(N^{1- 1/d}).
\qe
Since an easy rearrangement argument together with the definition of $S$ yield
\begin{align*}
\sum_{P\subset S}\Big( \sum_{j\in P}k_j\Big) & =\frac 12 \sum_{P\subset S}\Big( \sum_{j\in S}k_j\Big)\\
 & =\Big(\sum_{j\subset S} k_j\Big) 2^{|S|-1} = \Big(\sum_{j=1}^d k_j\Big) 2^{|S|-1} ,
\end{align*}
Proposition \ref{cov cheby prop} would then follow from \eqref{cov ugly sum}--\eqref{residual contrib}.

We now turn to the proof of \eqref{main contrib} and \eqref{residual contrib}.

\paragraph*{Truncated sets and consequences.}
Given distinct $\alpha_1,\ldots,\alpha_p\in\{1,\ldots,d\}$, we  introduce the truncated sets
\begin{multline}
\label{def trunc sets}
\mathbb A_N[P,Q ; \alpha_1,\ldots,\alpha_p]\\
=\mathbb A_N[P,Q] \cap \big\{n_{\alpha_1}\leq \max (k_{\alpha_1},\ell_{\alpha_1})\big\}\cap\, \cdots \,\cap \big\{n_{\alpha_p}\leq \max (k_{\alpha_p},\ell_{\alpha_p})\big\}.
\end{multline}
By definition of $\frak b$ and $\mathbb A_N$, if  $N=M^d$ then $\mathbb
A_{M^d}=\mathcal C_M\times (\N^d\setminus\mathcal C_M)$ where we recall
\eq
\label{def C_M}
\mathcal C_M=\Big\{\bs n\in\N^d:\; 0\leq n_1,\ldots, n_d\leq M-1\Big\}.
\qe
Moreover, if for an arbitrary $N$  we denote by $M=\lfloor
N^{1/d}\rfloor$ the integer satisfying $M^d\leq N <(M+1)^d$, then
$\frak b(N)\in \mathcal C_{M+1}$ and thus, for any $(\bs n,\bs
m)\in\mathbb A_N$, we  have $\bs n\in\mathcal C_{M+1}$.
As a consequence, for every $P,Q\subset S$,  we have the rough upper
bound $|\mathbb A_N[P,Q ;
\alpha_1,\ldots,\alpha_p]|=\cO(M^{d-p})$. In particular,
\eq
\label{rough UB}
\big|\mathbb A_N[P,Q ; \alpha_1,\ldots,\alpha_p]\big|=o(N^{1- 1/d})\quad \mbox{ for every } p\geq 2.
\qe

In order to restrict ourselves to the easier setting where $N$ is a power of $d$, we will use the following lemma. Its proof uses in a crucial way the graded lexicographic order we chose to equip $\N^d$ with, and it is deferred to the end of the present proof.

\begin{lemma}
\label{non-decreasing} Assume $\bs k=\bs \ell$. For every $P\subset S$, $\alpha\in S\setminus P$ and for every  $N>\max(k_1^d,\ldots,k_d^d)$, we have

\begin{itemize}
\item[{\rm (a)}]
$\big| \mathbb A_{N}[P,P] \big| \leq \big|\mathbb A_{N+1}[P,P]\big|$,\\
\item[{\rm (b)}]
$\big|\mathbb A_{N}[P,P;\alpha]\big|\leq \big|\mathbb A_{N+1}[P,P;\alpha]\big|$.
\end{itemize}
\end{lemma}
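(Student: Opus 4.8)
The plan is to prove (a) and (b) together by constructing an explicit injection of $\mathbb A_N[P,P]$ into $\mathbb A_{N+1}[P,P]$ which restricts to the truncated sets. The starting point is that, when $\bs k=\bs\ell$ and $Q=P$, the index $\bs m$ is a function of $\bs n$ on $\mathbb A_N[P,P]$: the defining relations force $m_j=n_j+k_j$ for $j\in P$, $m_j=|n_j-k_j|$ for $j\in S\setminus P$, and $m_j=n_j$ for $j\notin S$; call this map $\bs n\mapsto\bs m(\bs n)$. Consequently $(\bs n,\bs m)\mapsto\bs n$ identifies $\mathbb A_N[P,P]$ with the set $B_N$ of those $\bs n$ with $n_j\neq 0$ for $j\in P$, $\frak b^{-1}(\bs n)\le N-1$, and $\frak b^{-1}(\bs m(\bs n))\ge N$; and $B_{N+1}$ differs from $B_N$ only in that it may acquire the single new index $\frak b(N)$ while it loses precisely the $\bs n\in B_N$ with $\bs m(\bs n)=\frak b(N)$. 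So (a) reduces to two assertions, valid for $N>\max_j k_j^d$: (i) at most one $\bs n$ with $n_j\neq0$ for $j\in P$ and $\frak b^{-1}(\bs n)\le N-1$ satisfies $\bs m(\bs n)=\frak b(N)$; and (ii) if such an $\bs n$ exists, then $\frak b(N)$ itself lies in $B_{N+1}$, i.e. $(\frak b(N))_j\neq0$ for $j\in P$ and $\frak b^{-1}(\bs m(\frak b(N)))\ge N+1$.

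Granting (i) and (ii), I would define $\Phi$ on $\mathbb A_N[P,P]$ to be the identity on the pairs with $\bs m\ge\frak b(N+1)$, and to send every pair with $\bs m=\frak b(N)$ to $(\frak b(N),\bs m(\frak b(N)))$. Assertion (ii) guarantees the image is contained in $\mathbb A_{N+1}[P,P]$; assertion (i) guarantees that at most one pair falls in the second branch (so that branch is trivially injective); and the two branches cannot collide because $\frak b(N)$ is not an admissible first index at level $N$. Hence $\Phi$ is injective, which is (a). For (b) it suffices to observe that a lost $\bs n$ lying in the truncated set has $n_\alpha\le k_\alpha$ (recall $\alpha\in S\setminus P$ and $\bs k=\bs\ell$), so that $(\frak b(N))_\alpha=\bs m(\bs n)_\alpha=k_\alpha-n_\alpha\le k_\alpha$; thus $\Phi$ maps $\mathbb A_N[P,P;\alpha]$ into $\mathbb A_{N+1}[P,P;\alpha]$ and the same injectivity argument applies.

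It remains to prove (i) and (ii), and this is where the graded lexicographic order is used in an essential way. I would set $M:=\lfloor N^{1/d}\rfloor$, so that $\max\frak b(N)=M$ and, by hypothesis, $M\ge\max_j k_j$. Solving $\bs m(\bs n)=\frak b(N)$ coordinatewise is forced except at the coordinates $j\in S\setminus P$ with $1\le(\frak b(N))_j\le k_j$, where $n_j$ may be either $(\frak b(N))_j+k_j$ or $k_j-(\frak b(N))_j$; the whole difficulty is to show, using the constraint $\bs n<\frak b(N)$ together with the position inside the layer $\mathcal C_{M+1}\setminus\mathcal C_M$ at which $\frak b(N)$ attains its maximal entry, that these ambiguous coordinates are in fact pinned down, leaving at most one admissible $\bs n$ — this is (i). The non-vanishing part of (ii) is immediate ($(\frak b(N))_j=n_j+k_j\ge2$ for $j\in P$), and the order inequality in (ii) would be obtained by the same layer-and-lexicographic bookkeeping: locate the leftmost coordinate at which $\frak b(N)$ and $\bs m(\frak b(N))$ disagree and check that $\bs m(\frak b(N))$ exceeds $\frak b(N)$ there. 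The main obstacle is thus the combinatorial control of how the three-valued behaviour of $\bs n\mapsto\bs m(\bs n)$ — linear on $P$, reflected on $S\setminus P$, trivial outside $S$ — interacts with the lexicographic refinement within a sup-norm layer; this is precisely why the graded lexicographic order was chosen, and once it is settled the lemma telescopes (using $\mathbb A_{M^d}=\mathcal C_M\times(\N^d\setminus\mathcal C_M)$) to reduce the covariance asymptotics of Proposition~\ref{cov cheby prop} to the perfect-power case $N=M^d$.
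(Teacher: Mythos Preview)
Your strategy is the same as the paper's: both arguments note that passing from $N$ to $N{+}1$ can only remove those $(\bs n,\bs m)\in\mathbb A_N[P,P]$ with $\bs m=\frak b(N)$, and propose to compensate each such loss by the single gained pair $(\frak b(N),\bs m(\frak b(N)))$. You go further than the paper in isolating the uniqueness assertion~(i) as the crux, and in correctly locating the ambiguity at the coordinates $j\in S\setminus P$, where $m_j=|n_j-k_j|$ does not determine $n_j$.

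The problem is that assertion~(i) is actually false, so the injection $\Phi$ you describe cannot exist in general. Take $d=2$, $\bs k=\bs\ell=(1,2)$ (so $S=\{1,2\}$), $P=\{1\}$, and $N=21$, so that $M=4>\max(k_1,k_2)$ and $\frak b(21)=(4,1)$. The system $n_1+1=4$, $|n_2-2|=1$ has the two solutions $\bs n=(3,1)$ and $\bs n=(3,3)$, both with $\frak b^{-1}(\bs n)<21$ and $n_1\neq0$; hence two pairs are lost while only one pair, $((4,1),(5,1))$, is gained. A direct enumeration gives $\big|\mathbb A_{21}[\{1\},\{1\}]\big|=5>4=\big|\mathbb A_{22}[\{1\},\{1\}]\big|$, so the inequality in~(a) fails at this $N$; the weighted sums $\sum(1/\sqrt2)^{\sigma[P,P](\bs n)}$ also drop, from $7/4$ to $3/2$. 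Thus the ``layer-and-lexicographic bookkeeping'' you outline cannot pin down the ambiguous coordinates, because they are genuinely free. The paper's own proof shares this lacuna: it exhibits one gained pair $(\bs m,\tilde{\bs m})$ without ever bounding how many $(\bs n,\bs m)$ with $\bs m=\frak b(N)$ are lost. What the application to Proposition~\ref{cov cheby prop} really needs is only the sandwich between the perfect powers $M^d$ and $(M+1)^d$ up to an $o(M^{d-1})$ error, and that weaker statement would have to be obtained by a different route (for instance by directly bounding $\big|\mathbb A_N[P,P]\,\triangle\,\mathbb A_{M^d}[P,P]\big|$).
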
\

\rev{\paragraph{Proof of \eqref{main contrib}.}} Assume $\bs k=\bs \ell$. As a consequence of Lemma \ref{non-decreasing} (a), if we set $M=\lfloor N^{1/d}\rfloor$ then we have for every $N$ large enough
\begin{align*}
\sum_{(\bs n,\bs m)\in \mathbb A_{M^d}[P,P]}\left(\frac{1}{ \sqrt 2}\right)^{\sigma[P, P](\bs n)} & \leq \sum_{(\bs n,\bs m)\in \mathbb A_N[P,P]}\left(\frac{1}{ \sqrt 2}\right)^{\sigma[P, P](\bs n)}  \leq \sum_{(\bs n,\bs m)\in \mathbb A_{(M+1)^d}[P,P]}\left(\frac{1}{ \sqrt 2}\right)^{\sigma[P, P](\bs n)}.
\end{align*}
Thus, it is enough to prove that, as $M\to\infty$,
\begin{align}
\label{main contrib M}
\sum_{(\bs n,\bs m)\in \mathbb A_{M^d}[P,P]}\left(\frac{1}{ \sqrt 2}\right)^{\sigma[P, P](\bs n)}
=\left(\frac{1}{2}\right)^{| S|}\Big(\sum_{j\in P} k_j\Big)M^{d-1}+o(M^{d-1}),
\end{align}
in order  to establish \eqref{main contrib}. To do so, for any $P\subset S$ and $\alpha\in S\setminus P$, we set
\begin{align}
\label{def A*}
\mathbb A_{M^d}^*[P]& =\mathbb A_{M^d}[P,P]\cap\big\{n_j>k_j \mbox{ for all }j\in S\setminus P\big\},\\
\label{def A* trunc}
\mathbb A_{M^d}^*[P;\alpha]& =\mathbb A_{M^d}[P,P;\alpha]\cap\big\{n_j>k_j \mbox{ for all }j\in S\setminus (P\cup\{\alpha\})\big\},
\end{align}
and use the following lemma; its proof is deferred to the end of the actual proof.

\begin{lemma}
\label{estimates M}
Assume $\bs k=\bs \ell$. For every $P\subset S$ and $\alpha\in S\setminus P$, we have as $M\to\infty$
\begin{align}
\label{estimates M main}
\big| \mathbb A_{M^d}^*[P] \big|& =\Big(\sum_{j\in P}k_j\Big)M^{d-1}+o(M^{d-1}),\\
\label{estimates M residual}
\big| \mathbb A_{M^d}^*[P;\alpha]\big|& =o(M^{d-1}).
\end{align}
\end{lemma}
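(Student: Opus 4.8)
The proof is a direct counting argument: both estimates follow from writing an exact formula for each cardinality and reading off its leading term in $M$. I would begin by unwinding the definitions on the relevant set. Recall $\mathbb A_{M^d}=\mathcal C_M\times(\N^d\setminus\mathcal C_M)$, so an element $(\bs n,\bs m)$ of $\mathbb A_{M^d}^*[P]$ consists of $\bs n\in\mathcal C_M$ satisfying the equalities defining $\mathbb A_{M^d}[P,P]$ (here with $\bs k=\bs\ell$) together with $n_j>k_j$ for all $j\in S\setminus P$. The key observation is that these equalities determine $\bs m$ as a function of $\bs n$: one has $m_j=n_j+k_j$ for $j\in P$, $m_j=n_j-k_j$ for $j\in S\setminus P$ (using $n_j>k_j$), and $m_j=n_j$ for $j\notin S$. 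Hence the only remaining constraint is $\bs m\notin\mathcal C_M$, and since $m_j\leq M-1$ is automatic for $j\notin S$ and (because $n_j-k_j<M$) for $j\in S\setminus P$, this is equivalent to the existence of some $j\in P$ with $n_j+k_j\geq M$, i.e. $n_j\geq M-k_j$ (possible precisely because $k_j\geq1$ there).

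The cardinality is then computed by complementary counting inside the product set $\prod_j A_j$, where $A_j=\{0,\dots,M-1\}$ for $j\notin S$, $A_j=\{k_j+1,\dots,M-1\}$ for $j\in S\setminus P$, and $A_j=\{1,\dots,M-1\}$ for $j\in P$ (the constraint $n_j\neq0$ on $P$ being absorbed here): one removes the tuples with $n_j\leq M-1-k_j$ for every $j\in P$. This gives, for all $M$ large enough,
\[
\big|\mathbb A_{M^d}^*[P]\big|=M^{d-|S|}\prod_{j\in S\setminus P}(M-1-k_j)\,\Big(\prod_{j\in P}(M-1)-\prod_{j\in P}(M-1-k_j)\Big).
\]
Expanding in powers of $M$, the bracket equals $\big(\sum_{j\in P}k_j\big)M^{|P|-1}+\cO(M^{|P|-2})$, while $\prod_{j\in S\setminus P}(M-1-k_j)=M^{|S|-|P|}+\cO(M^{|S|-|P|-1})$; multiplying by $M^{d-|S|}$ yields \eqref{estimates M main}. (When $P=\emptyset$ the bracket is $1-1=0$, consistently with $\mathbb A_{M^d}^*[\emptyset]=\emptyset$.)

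For \eqref{estimates M residual} I would run the same enumeration for $\mathbb A_{M^d}^*[P;\alpha]$: the extra constraint $n_\alpha\leq\max(k_\alpha,\ell_\alpha)=k_\alpha$ replaces the factor $A_\alpha$ by the $M$-independent set $\{0,\dots,k_\alpha\}$ (with $m_\alpha=k_\alpha-n_\alpha$ still determined), giving
\[
\big|\mathbb A_{M^d}^*[P;\alpha]\big|=(k_\alpha+1)\,M^{d-|S|}\prod_{j\in S\setminus(P\cup\{\alpha\})}(M-1-k_j)\,\Big(\prod_{j\in P}(M-1)-\prod_{j\in P}(M-1-k_j)\Big).
\]
Here the product over $S\setminus(P\cup\{\alpha\})$ is $\cO(M^{|S|-|P|-1})$ and the bracket is $\cO(M^{|P|-1})$, so the whole expression is $\cO(M^{d-2})=o(M^{d-1})$.

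There is no substantial obstacle: the only points requiring care are (i) verifying that $\bs m$ is genuinely a function of $\bs n$ on the starred sets — this is exactly where $n_j>k_j$ for $j\in S\setminus P$ is used, both to pin down $m_j=n_j-k_j$ and to keep $m_j<M$ there, so that escaping $\mathcal C_M$ can only happen through the $P$-coordinates — and (ii) bookkeeping of degenerate cases (e.g. $P=\emptyset$, or $M$ too small for $\{k_j+1,\dots,M-1\}$ to be nonempty), all of which are irrelevant to the $M\to\infty$ asymptotics.
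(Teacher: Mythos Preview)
Your proof is correct. For part~(a) it coincides with the paper's argument: both identify the bijection $(\bs n,\bs m)\mapsto\bs n$ onto a subset of $\mathcal C_M$ and then count; the paper writes this subset as $\bigcup_{p\in P}\mathcal C_M^{(p)}[P]$ and applies inclusion--exclusion, while you take the complementary count, which amounts to the same thing.

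For part~(b) your route is genuinely more direct. The paper proves \eqref{estimates M residual} by a dimension-reduction trick: it projects $\mathbb A_{M^d}^*[P;\alpha]$ to $\N^{d-1}$ via deletion of the $\alpha$-coordinate, lands inside the analogue of $\mathbb A_{M^{d-1}}^*[P]$ in dimension $d-1$, and invokes part~(a) there to get $o(M^{d-1})$. Your argument simply repeats the exact count from~(a) with the $\alpha$-coordinate frozen to a set of bounded size, yielding the sharper $\cO(M^{d-2})$ directly. This is cleaner and self-contained; the paper's approach, on the other hand, has the minor conceptual appeal of explaining \eqref{estimates M residual} as ``part~(a) one dimension down''.
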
\

\noindent Next, as a consequence of the rough upper bound \eqref{rough UB} and \eqref{estimates M residual}, we can write
\begin{align}
\label{A* win}
\big| \mathbb A_{M^d}[P,P] \big|  =  & \; \big|\mathbb A_{M^d}[P,P]\cap\big\{n_\alpha>k_\alpha \mbox{ for all }\alpha\in S\setminus P\big\}\big|\nonumber\\
& +  \big|\mathbb A_{M^d}[P,P]\cap\big\{n_\alpha\leq k_\alpha \mbox{ for at least one }\alpha\in S\setminus P\big\}\big|\nonumber\\
= & \; \big|\mathbb A^*_{M^d}[P]\big|+\sum_{\alpha\in S\setminus P}\big|\mathbb A^*_{M^d}[P;\alpha]\big|+o(M^{d-1})\nonumber\\
= & \;  \big|\mathbb A^*_{M^d}[P]\big|+o(M^{d-1}).
\end{align}
Since for any $(\bs n,\bs m)\in \mathbb A^*_{M^d}[P]$  we have
$\sigma[P, P](\bs n)=2 |S|$, see \eqref{def sigma} and \eqref{def A*},
the estimate  \eqref{main contrib M} follows from \eqref{A* win} and
\eqref{estimates M main}, and the proof of \eqref{main contrib} is therefore complete.

\rev{\paragraph*{Proof of  \eqref{residual contrib}.}}
 Assume now that $\bs
k\neq\bs \ell$. Since $\bs k$ and $\bs \ell$ have the same zero
components, it follows that neither $k_\alpha$ nor $\ell_\alpha$ is
zero. Thus, \eqref{def sets} yields that if  $k_\alpha\neq
\ell_\alpha$ and  $\mathbb A_{N}[P,Q]\neq \emptyset$, then either
$\alpha\in P\cap (S\setminus Q)$ or $\alpha\in Q\cap (S \setminus P)$ and moreover, for any $(\bs n ,\bs m)\in \mathbb A_{N}[P,Q]$, we have
\[
2n_\alpha=|k_\alpha-\ell_\alpha|,\qquad 2m_\alpha=k_\alpha+\ell_\alpha.
\]
In particular $\mathbb A_{M^d}[P,Q]=\mathbb A_{M^d}[P,Q;\alpha]$. Thus,  by virtue of the rough upper bound \eqref{rough UB}, we can assume in the proof of \eqref{residual contrib} that $\bs k$ and $\bs \ell$ differ by exactly one coordinate, namely there exists $\alpha\in\{1,\ldots,d\}$ such that $k_\alpha\neq \ell_\alpha$ and  $k_j=\ell_j$ for every $j\neq \alpha$.
In this setting, $\mathbb A_{N}[P,Q]\neq \emptyset$ then yields $P\setminus\{\alpha\}=Q\setminus\{\alpha\}$ and, if $(\bs n,\bs m)\in\mathbb A_N[P,Q]$, then  $(n_\alpha,m_\alpha)$ satisfies the equations
\[
\begin{cases}
n_\alpha+k_\alpha=m_\alpha,\qquad \ell_\alpha-n_\alpha=m_\alpha,\qquad n_\alpha\neq 0,\qquad \mbox{ if } \alpha\in P\\
n_\alpha+\ell_\alpha=m_\alpha,\qquad k_\alpha-n_\alpha=m_\alpha,\qquad n_\alpha\neq 0,\qquad \mbox{ if } \alpha\in Q
\end{cases} .
\]
By weakening these constraints to
\[
\begin{cases}
| \ell_\alpha-n_\alpha|=m_\alpha,\qquad n_\alpha\leq \ell_\alpha,\qquad  \mbox{ if } \alpha\in P\\
 |k_\alpha-n_\alpha|=m_\alpha,\qquad n_\alpha\leq k_\alpha,\qquad  \mbox{ if } \alpha\in Q
\end{cases},
\]
we obtain the upper bound
\eq
\label{residual A}
\big | \mathbb A_{N}[P,Q] \big | \leq
\begin{cases}
\big | \mathbb A_{N}^{(\bs \ell,\bs \ell)}[Q,Q;\alpha] \big |& \mbox{ if } \alpha\in P\\
\big | \mathbb A_{N}^{(\bs k,\bs k)}[P,P;\alpha] \big |& \mbox{ if } \alpha\in Q
\end{cases}
\qe
where $\mathbb A_{N}^{(\bs k,\bs \ell)}[P,Q;\alpha]$ is defined as in \eqref{def sets},\eqref{def trunc sets} but we emphasized the multi-indices $\bs k,\bs \ell$ which are involved.  By setting  $M=\lfloor N^{1/d}\rfloor+1$,  we obtain from Lemma \ref{non-decreasing} (b), the rough upper bound \eqref{rough UB} and  \eqref{def A* trunc},\eqref{estimates M residual} that,  as $N\to\infty$,
\begin{align}
& \big | \mathbb A_{N}^{(\bs k,\bs k)}[P,P;\alpha] \big |\nonumber\\
 & \qquad \leq \; \big | \mathbb A_{M^d}^{(\bs k,\bs k)}[P,P;\alpha] \big | \nonumber\\
  &\qquad =  \; \big | \mathbb A_{M^d}^{(\bs k,\bs k)}[P,P;\alpha]\cap\big\{n_j>k_j \mbox{ for all }j\in S\setminus (P\cup\{\alpha\})\big\}\big |+o(N^{1-1/d})\nonumber\\
 & \qquad = \; \quad o(N^{1-1/d}),
\end{align}
and similarly
\eq
\label{residual B}
\big |\mathbb A_{N}^{(\bs \ell,\bs \ell)}[Q,Q;\alpha] \big |= o(N^{1- 1/d}).
\qe
By combining \eqref{residual A}--\eqref{residual B}, we have finally proved \eqref{residual contrib} and the proof of Proposition \ref{cov cheby prop} is thus complete, up to the proof of Lemmas \ref{non-decreasing} and \ref{estimates M}.
\end{proof}

We now provide proofs for the remaining lemmas.

\begin{proof}[Proof of Lemma \ref{non-decreasing}]
Let $(\bs n,\bs m)\in \mathbb A_{N}[P,P]$. Then $(\bs n,\bs m)\notin \mathbb A_{N+1}[P,P]$ if and only if $\bs m = \frak b(N)$. Since  $\frak b(N)\in \mathcal C_{M+1}\setminus \mathcal C_M$, where $M=\lfloor N^{1/d}\rfloor$ and $\mathcal C_M$ has been introduced in \eqref{def C_M}, there exists $j\in\{1,\ldots,d\}$ such that $\frak b(N)_j=M$; let $j_*$ be the smallest $j$ satisfying this property. Notice also  $\bs n\in \mathcal C_{M+1}$ and $\bs m\in\mathbb N^d\setminus\mathcal C_M$. As soon as $M>\max(k_1,\ldots,k_d)$, that we assume from now, the equality $\bs m = \frak b(N)$  can only happen if $j_*\notin S\setminus P$.  Indeed, if $j_* \in S\setminus P$, then $m_{{j_*}}= |n_{{j_*}}-k_{{j_*}}|\leq  \max(n_{{j_*}}-1,k_{j_*})\leq M-1$. As a consequence,
\[
\big| \mathbb A_{N}[P,P] \big|\leq \big|\mathbb A_{N+1}[P,P] \big|\quad  \mbox{ if }  {j_*}\in S\setminus P.
 \]

 Next, assume that ${j_*} \in P$ or ${j_*}\notin S$.  We claim  that if  we set
\eq
\label{def tilde m}
 \tilde m_j=
\begin{cases} m_j+k_j & \mbox{ if } j\in P,\\
  |m_j-k_j| & \mbox{ if } j\in S\setminus P,\\
  m_j & \mbox{ if } j\notin S,
 \end{cases}
\qe
then $(\bs m,\tilde {\bs m})\in \mathbb A_{N+1}[P,P]\setminus\mathbb A_{N}[P,P]$. This  would show in particular that
\[
\big|\mathbb A_{N}[P,P] \big| \leq  \big|\mathbb A_{N+1}[P,P] \big|\quad \mbox{ if either } {j_*}\in P \mbox { or } {j_*}\notin S,
 \]
 and thus complete the proof of (a). That $(\bs m,\tilde {\bs m})\in \mathbb A_{N+1}[P,P]\setminus\mathbb A_{N}[P,P]$ is by construction obvious provided one can show  $\tilde{\bs m}\geq \frak b(N+1)$.

 If ${j_*}\in P$,  then we have
 \[
 \tilde m_{j_*}=m_{j_*}+k_{j_*}=M+k_{j_*}> M
 \]
 and thus $\tilde{\bs m}\in \N^d\setminus \mathcal C_{M+1}$. As a consequence, there exists $m\geq(M+1)^d$ such that $\tilde {\bs m}=\frak b(m)$ and, since  $N+1\leq (M+1)^d$, we have shown $\tilde{\bs m}\geq \frak b(N+1)$.

If  ${j_*}\notin S$, we argue by contradiction and assume   $\tilde {\bs m}\leq \frak b(N)=\bs m$. We shall see this is not compatible with the graded lexicographic order. Indeed, since by construction $\tilde {\bs m}\neq \bs m$ and $\bs n\neq \bs m$  (because $\bs k\neq (0,\ldots,0)$ by assumption), we actually have $\tilde {\bs m}< \bs m$ and  $\bs n< \bs m$.  Because ${j_*}\notin S$ by assumption, we moreover have  $n_{j_*}=m_{j_*}=\tilde m_{j_*}=M$ and thus $\bs n,\bs m,  \tilde {\bs m}\in \mathcal C_{M+1}\setminus\mathcal C_M$. As a consequence, $ {\bs n}<_{{\rm lex}}\bs m$ and $\tilde {\bs m}<_{{\rm lex}}\bs m$ in the  lexicographic order. This means there exists $\gamma\in\{1,\ldots,d\}$ such that $n_i=m_i$ for every $i<\gamma$ and $n_\gamma<m_\gamma$, and equivalently $i\notin S$ when $i<\gamma$ and $\gamma\in P$. Similarly, there exists $\eta\in\{1,\ldots,d\}$ such that $\tilde m_i=m_i$ for every $i<\eta$ and $\tilde m_\eta<m_\eta$, and thus $i\notin S$ when $i<\eta$ and $\eta\in S\setminus P$. But this is impossible and thus  $\tilde {\bs m}\geq \frak b(N+1)$, which completes the proof of (a).

Part (b) is proved by following the exact same line of arguments; in this setting one should also check that if  $(\bs n,\bs m)\in\mathbb A_{N}[P,P;\alpha]$ then $m_\alpha\leq k_\alpha$  in order to show $(\bs m ,\tilde {\bs m})$ (with $\tilde{\bs m}$ defined in \eqref{def tilde m})  actually belongs to $\mathbb A_{N+1}[P,P;\alpha]$. Recalling  $\alpha\in S\setminus P$ by assumption this is clear, indeed $m_\alpha=|n_\alpha-k_\alpha|$ together with $n_\alpha\leq k_\alpha$ yield $m_\alpha=k_\alpha-n_\alpha\leq k_\alpha$.
\end{proof}

\begin{proof}[Proof of Lemma \ref{estimates M}] To prove (a), fix $P\subset S$ and assume $M>\max(k_1,\ldots,k_d)$. It follows from the definitions \eqref{def sets},\eqref{def A*} that
\eq
\label{A* easy}
\mathbb A_{M^d}^*[P]= \Bigg\{(\bs n,\bs m)\in\mathbb A_{M^d} \;\Bigg| \;
\begin{matrix*}[r]
n_j+k_j=m_j, \\
n_j-k_j=m_j, \\
n_j=m_j,\\
\end{matrix*}
\qquad
\begin{matrix*}[l]
n_j\neq 0, \\
n_j > k_j, \\
\\
\end{matrix*}
\qquad
\begin{matrix*}[l]
\mbox{ if } j\in P\\
\mbox{ if } j\in S\setminus P\\
\mbox{ if } j\notin S
\end{matrix*}
\Bigg\}.
\qe
Recall that  $\mathbb A_{M^d}=\mathcal C_M\times (\N^d\setminus\mathcal C_M)$
where $\mathcal C_M$ is defined in \eqref{def C_M}. Clearly, if we set
\eq
\label{def C_M[P_+]}
\mathcal C_M[P]=\Big\{\bs n\in\mathcal C_M : \mbox{ there exists $\bs m\in\N^d\setminus\mathcal C_M$}, \;(\bs n,\bs m)\in \mathbb A_{M^d}^*[P]\Big\}
\qe
then $(\bs n,\bs m)\mapsto \bs n$ is a bijection from $ \mathbb A_{M^d}^*[P]$ to $\mathcal C_M[P]$.

We  claim that if for any $p\in P$  we set
\eq
\label{def C_M[P_+] alpha}
\mathcal C_M^{(p)}[P]=\Bigg\{\bs n\in\mathcal C_M \; \Bigg|\;
\begin{matrix*}[r]
1\leq n_j\leq M-1\\
k_j< n_j\leq M-1\\
M-k_p\leq  n_p\leq M-1
\end{matrix*}
\;
\begin{matrix*}[l]
\mbox{ if } j\in P\\
\mbox{ if } j\in S\setminus P\\\\
\end{matrix*}\;
\Bigg\},
\qe
then we have
\eq
\label{C_M[P_+] identity}
\mathcal C_M[P]=\bigcup_{p\in P}\mathcal C_M^{(p)}[P].
\qe
Indeed, let $\bs n\in \mathcal C_M[P]$. By definition there exists $\bs m\in \N^d\setminus\mathcal C_M$ such that $(\bs n,\bs m)\in\mathbb A_{M^d}^*[P]$. This provides, see \eqref{A* easy}, that $1\leq n_j\leq M-1$ if $j\in P$, that  $k_j< n_j\leq M-1$ if $j\in S\setminus P$, and the existence of $p\in\{1,\ldots,d\}$ satisfying $m_p\geq M$. Since $\bs n\in\mathcal C_M$ then $n_p\leq M-1$ and thus $p\in P$ because otherwise $m_p\leq n_p$. Together with the equation $n_p+k_p=m_p$ this finally yields that  $M-k_p\leq n_p \leq M-1$, namely $\bs n\in \mathcal C_M^{(p)}[P]$ for some $p\in P$. As for the reverse inclusion, if $\bs n\in \mathcal C_M^{(\alpha)}[P]$ for some $\alpha\in P$ then  set
\[
m_j=
\begin{cases} n_j+k_j & \mbox{ if } j\in P,\\
 n_j-k_j & \mbox{ if } j\in S\setminus P,\\
 n_j & \mbox{ if } j\notin S,
 \end{cases}
\]
and observe that $\bs m\in \N^d\setminus\mathcal C_M$ since $m_p\geq M$ and $n_j-k_j\geq 0$ for every $j\in S\setminus P$. Thus, since clearly $(\bs n,\bs m)\in\mathbb A_{M^d}^*[P]$, we have shown  $\bs n\in \mathcal C_M[P]$ and \eqref{C_M[P_+] identity} is proved.

Next, since for every  distinct  $p_1,\ldots,p_q\in P$ the definition \eqref{def C_M[P_+] alpha} yields
\[
\big| \mathcal C_M^{(p_1)}[P]\cap \,\cdots\,\cap  \mathcal C_M^{(p_q)}[P] \big|= k_{p_1}\cdots k_{p_q} M^{d-q}+\cO(M^{d-q-1}),
\]
then (a) follows from  \eqref{C_M[P_+] identity} and the inclusion-exclusion principle.

We now turn to (b) and fix $\alpha\in S\setminus P$. Let $\mathcal C_M^{(d)}$ be  the $d$-dimensional discrete hypercube of length $M$ defined as in \eqref{def C_M}. We then set
\[
\mathbb A^{(d-1)}_{M^{d-1}}=\mathcal C_M^{(d-1)}\times (\N^{d-1}\setminus\mathcal C_M^{(d-1)})
\]
and introduce
\[
\mathbb A_{M^{d-1}}^*[P]^{(d-1)}=
\Bigg\{(\bs n,\bs m)\in\mathbb A^{(d-1)}_{M^{d-1}} \;\Bigg| \;
\begin{matrix*}[r]
n_j+k_j=m_j, \\
n_j-k_j=m_j, \\
n_j=m_j,\\
\end{matrix*}
\qquad
\begin{matrix*}[l]
n_j\neq 0, \\
n_j > k_j, \\
\\
\end{matrix*}
\qquad
\begin{matrix*}[l]
\mbox{ if } j\in P\\
\mbox{ if } j\in S\setminus (P\cup\{\alpha\})\\
\mbox{ if } j\notin S\setminus\{\alpha\}
\end{matrix*}
\Bigg\}.
\]
The statement (a) of the  lemma applied in dimension $d-1$ then provides
\eq
\label{lower dim estimates}
\big| \mathbb A_{M^{d-1}}^*[P]^{(d-1)}\big| = o(M^{d-1}).
\qe

Consider  the map $\frak p:\N^d\to\N^{d-1}$ defined by
\[
\frak p(n_1,\ldots,n_d)=(n_1,\ldots, n_{\alpha-1},n_{\alpha+1},\ldots,n_d).
\]
Let $(\bs n,\bs m)\in  \mathbb A_{M^d}^*[P;\alpha]$.  Since $\bs m\in\N^d\setminus \mathcal C_M^{(d)}$ and $m_\alpha\leq k_\alpha<M$, there exists $j\neq \alpha$ such that $m_j\geq M$.  It follows that  $(\frak p(\bs n),\frak p(\bs m))\in \mathcal C_M^{(d-1)}\times (\N^{d-1}\setminus \mathcal C_M^{(d-1)})$ and thus $(\frak p(\bs n),\frak p(\bs m))\in \mathbb A^{*}_{M^{d-1}}[P]^{(d-1)}$. As a consequence, we have the upper bound
\[
\big| A_{M^d}^*[P;\alpha]\big|\leq k_\alpha \big| (\frak p\times \frak p)\big(A_{M^d}^*[P;\alpha]\big)\big|\leq k_\alpha \big| A_{M^{d-1}}^*[P]^{(d-1)}\big|
\]
and thus (b) follows from \eqref{lower dim estimates}.

\end{proof}

\subsection{Covariance asymptotics: the general case}

We now extend Proposition~\ref{cov cheby prop} to the general setting of
measures satisfying the assumptions of Theorem \ref{th CLT general}. More
precisely, we prove the following.

\begin{proposition}
\label{general cov}
Let $\mu=\mu_1\otimes\cdots\otimes\mu_d$, where the $\mu_j$'s are Nevai-class
probability measures on $I$. Let $\bv x_1,\ldots, \bv x_N$ and $\bv
x_1^*,\ldots, \bv x_N^*$ be random
variables drawn from the multivariate OP Ensembles with respective reference
measures $\mu$ and $\mueqd$. Then, given any polynomial functions $P,Q$ on $\R^d$,
\eq
 \lim_{N\rightarrow\infty}\frac{1}{N^{1-1/d}}\,\Bigg|\,\mathbb C{\rm ov}\left[\,\sum_{i=1}^NP(\bv x_i),\sum_{j=1}^NQ(\bv x_i)\right] -\mathbb C{\rm ov}\left[\,\sum_{i=1}^NP(\bv x_i^*),\sum_{j=1}^NQ(\bv x_i^*)\right]\Bigg|=0.
\qe
\end{proposition}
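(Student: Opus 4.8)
The plan is to reduce to monomials and then compare the two covariances term by term, exploiting Lemma~\ref{cov representation}, the product structure \eqref{e:mope} of the multivariate orthonormal polynomials, and the path‑representation estimates \eqref{nice path}--\eqref{key ineq 3-rec} already developed for the proof of Lemma~\ref{convergence CD out diag}.

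First I would use bilinearity of the covariance to reduce to $P(x)=x_1^{a_1}\cdots x_d^{a_d}$ and $Q(x)=x_1^{b_1}\cdots x_d^{b_d}$. Applying Lemma~\ref{cov representation} to both ensembles, expanding each scalar product via \eqref{e:mope}, and writing $\bs n=\frak b(n)$, $\bs m=\frak b(m)$, the difference of covariances becomes a single sum over $(\bs n,\bs m)\in\mathbb A_N$ (with $\mathbb A_N$ as in \eqref{def AN}) of $\prod_{j=1}^d d^{(j)}_{m_j,n_j}-\prod_{j=1}^d e^{(j)}_{m_j,n_j}$, where $d^{(j)}_{m,n}=\langle x^{a_j}\phi^{(j)}_n,\phi^{(j)}_m\rangle_{L^2(\mu_j)}\langle x^{b_j}\phi^{(j)}_n,\phi^{(j)}_m\rangle_{L^2(\mu_j)}$ and $e^{(j)}_{m,n}$ is the same expression with $T_n$ and $\mu_{eq}$ in place of $\phi^{(j)}_n$ and $\mu_j$. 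I would then record three one‑dimensional facts: (i) by a degree argument $\langle x^a\phi^{(j)}_n,\phi^{(j)}_m\rangle_{L^2(\mu_j)}=0$ whenever $|m-n|>a$, so only pairs with $|m_j-n_j|\le r_j:=\min(a_j,b_j)$ survive; (ii) by the path representation \eqref{nice path} and boundedness of the recurrence coefficients of a Nevai‑class measure, all $d^{(j)}_{m,n}$ and $e^{(j)}_{m,n}$ are bounded by a constant $C$; and (iii) by \eqref{key ineq 3-rec} and the Nevai‑class hypothesis, $|d^{(j)}_{m,n}-e^{(j)}_{m,n}|\to0$ as $\min(m,n)\to\infty$, uniformly over $|m-n|\le r$, where $r:=\max_j r_j$. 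Setting $\delta_j(v)=\sup_{|m-v|\le r}|d^{(j)}_{m,v}-e^{(j)}_{m,v}|$, this gives $\delta_j(v)\le 2C$, $\delta_j(v)\to0$, and hence $\sum_{v=0}^{M}\delta_j(v)=o(M)$ by Ces\`aro's theorem.

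Next I would telescope $\prod_j d_j-\prod_j e_j=\sum_{i=1}^d\big(\prod_{j<i}e_j\big)(d_i-e_i)\big(\prod_{j>i}d_j\big)$, so that, using (i) and (ii), the difference of covariances is bounded in absolute value by $C^{d-1}\sum_{i=1}^d\sum|d^{(i)}_{m_i,n_i}-e^{(i)}_{m_i,n_i}|$, the inner sum running over $(\bs n,\bs m)\in\mathbb A_N$ with $|m_j-n_j|\le r_j$ for all $j$. The key structural input, which is where the graded lexicographic order enters, is that for such pairs one has $\bs n\in\mathcal C_{M+1}$ and $\max_j n_j\ge M-r$, with $M=\lfloor N^{1/d}\rfloor$: indeed $\bs m\ge\frak b(N)\ge\frak b(M^d)=(0,\dots,0,M)$ forces $\max_j m_j\ge M$, while $\bs n\le\frak b(N-1)\le(M,\dots,M)$ forces $n_j\le M$; moreover $\bs m$ is determined by $\bs n$ up to $\prod_j(2r_j+1)$ choices. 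Hence the contributing $\bs n$ fill only a codimension‑one slab of the cube, of cardinality $O(M^{d-1})$. I would then split the $i$‑th inner sum according to a coordinate $\alpha$ with $n_\alpha\ge M-r$: if $\alpha=i$, then $n_i\in\{M-r,\dots,M\}$ and the sum is $O(M^{d-1})\sup_{v\ge M-r}\delta_i(v)=o(M^{d-1})$; if $\alpha\ne i$, the constraint on $n_\alpha$ costs a factor $M$ while $n_i$ is unconstrained, giving $O(M^{d-2})\sum_{v=0}^{M}\delta_i(v)=O(M^{d-2})\,o(M)=o(M^{d-1})$. Summing over $i$ shows the difference of covariances is $o(N^{1-1/d})$, as required.

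The main obstacle is the sharpness of this estimate: the set of contributing pairs has cardinality of exact order $N^{1-1/d}$ and each summand is only $O(1)$, so any crude bound yields $O(N^{1-1/d})$ rather than $o(N^{1-1/d})$. One genuinely has to combine two inputs — the geometry of the graded‑lexicographic boundary slab and the decay, in an averaged (Ces\`aro) sense, of the one‑dimensional defects $\delta_i$ — and the two cases ``$\alpha=i$'' and ``$\alpha\ne i$'' are precisely what isolates each input. A secondary delicate point is extracting fact (iii) from \eqref{key ineq 3-rec}: one must observe that, since $|m-n|\le r$ forces both $m$ and $n$ to be large, all recurrence‑coefficient indices appearing in the path representation tend to infinity, so the Nevai‑class limits $a_k\to1/2$, $b_k\to0$ apply uniformly over the finitely many relevant paths.
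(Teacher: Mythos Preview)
Your argument is correct and follows the same overall strategy as the paper: reduce to monomials, apply Lemma~\ref{cov representation}, factorise via \eqref{e:mope}, use the path representation \eqref{nice path}--\eqref{key ineq 3-rec} to compare one-dimensional matrix elements, and exploit the fact that contributing pairs $(\bs n,\bs m)$ lie in a codimension-one slab of $\mathcal C_{M+1}$. Where you differ is in the final estimate. You telescope $\prod_j d_j-\prod_j e_j$ into single-coordinate defects and then split according to whether the boundary coordinate $\alpha$ coincides with the telescoping index $i$, invoking a Ces\`aro argument for the case $\alpha\neq i$. The paper instead avoids telescoping altogether: it bounds the full error $E(\bs n,\bs m)$ and introduces a threshold $N^{1/2d}$, splitting $\mathbb A_N$ into the set $\mathbb A_N^*$ where \emph{all} $m_j\ge N^{1/2d}$ (so $E\to 0$ uniformly there, while $|\mathbb A_N^*|=O(N^{1-1/d})$) and a residual set $\mathbb A_N^0$ where some $m_j<N^{1/2d}$ (so $E$ is merely bounded but $|\mathbb A_N^0|=O(M^{d-3/2})=o(N^{1-1/d})$). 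The threshold trick is slightly cleaner --- no Ces\`aro averaging is needed --- but your telescoping approach is a perfectly valid alternative and makes the role of each coordinate's defect more explicit. One minor point: your phrase ``the constraint on $n_\alpha$ costs a factor $M$'' reads backwards; you mean it \emph{saves} a factor $M$, but the arithmetic you then perform is correct.
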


For the proof of the proposition, we use a few ingredients from the Step 2 of the proof of Lemma \ref{convergence CD out diag} to which we refer the reader to.

\begin{proof}[Proof of Proposition \ref{general cov}]
By linearity, it is enough to prove the proposition with
$P(x)=x_1^{\alpha_1}\cdots x_d^{\alpha_d}$ and $Q(x)=x_1^{\beta_1}\cdots
x_d^{\beta_d}$ for any fixed $\alpha_1,\beta_1,\ldots,\alpha_d,\beta_d\in\N$.
Lemma \ref{cov representation} then provides
\begin{align}
\label{CG A}
 \mathbb C{\rm ov}\left[\sum_{i=1}^NP(\bv x_i),\sum_{j=1}^NQ(\bv x_i)\right] \nonumber
& = \sum_{n=0}^{N-1}\sum_{m=N}^\infty \langle x_1^{\alpha_1}\cdots x_d^{\alpha_d} \phi_n,\phi_m\rangle_{L^2(\mu)}\langle x_1^{\beta_1}\cdots x_d^{\beta_d}\phi_n,\phi_m\rangle_{L^2(\mu)}\nonumber\\
 & =  \sum_{(\bs n,\bs m)\in\mathbb A_N}\prod_{j=1}^d\langle x^{\alpha_j}\phi^{(j)}_{n_j},\phi^{(j)}_{m_j}\rangle_{L^2(\mu_j)}\langle x^{\beta_j}\phi^{(j)}_{n_j},\phi^{(j)}_{m_j}\rangle_{L^2(\mu_j)}
\end{align}
where we recall that
\[
\mathbb A_N=\big\{\big (\frak b(n),\frak b(m) \big):\; n\leq N-1,\; m\geq N\big\}\subset\N^d\times\N^d.
\]
In particular, by choosing $\mu=\mueqd$ in \eqref{CG A}, we obtain
\eq
\label{CG A cheby}
\mathbb C{\rm ov}\left[\sum_{i=1}^NP(\bv x_i^*),\sum_{j=1}^NQ(\bv x_i^*)\right]
= \sum_{(\bs n,\bs m)\in\mathbb A_N}
\prod_{j=1}^d\langle x^{\alpha_j}T_{n_j},T_{m_j}\rangle_{L^2(\mu_{eq})}\langle x^{\beta_j}T_{n_j},T_{m_j}\rangle_{L^2(\mu_{eq})}.
\qe
Thus, by combining \eqref{CG A} and \eqref{CG A cheby}, we see that, if we set for convenience
\begin{multline}
\label{Error ab}
E(\bs n,\bs m)=\Bigg| \prod_{j=1}^d\langle x^{\alpha_j}\phi^{(j)}_{n_j},\phi^{(j)}_{m_j}\rangle_{L^2(\mu_j)}\langle x^{\beta_j}\phi^{(j)}_{n_j},\phi^{(j)}_{m_j}\rangle_{L^2(\mu_j)}
\\
 -\prod_{j=1}^d\langle x^{\alpha_j}T_{n_j},T_{m_j}\rangle_{L^2(\mu_{eq})}\langle x^{\beta_j}T_{n_j},T_{m_j}\rangle_{L^2(\mu_{eq})}\Bigg|,
\end{multline}
then proving the proposition amounts to showing that
\eq
\label{todo cov approx}
\lim_{N\rightarrow\infty}\frac{1}{N^{1-1/d}}
\sum_{(\bs n,\bs m)\in\mathbb A_N}
E(\bs n,\bs m)=0.
\qe

Next, for every $j\in\{1,\ldots,d\}$, the three-term recurrence relation reads
\eq
x\phi_n^{(j)}=a_n^{(j)}\phi_{n+1}^{(j)}+b_n^{(j)}\phi_n^{(j)}+a_{n-1}^{(j)}\phi_{n-1}^{(j)},\qquad n\geq 0,
\qe
where we set  $a_{-1}^{(j)}=0$. As in Step 2 of the proof of Lemma
\ref{convergence CD out diag}, we complete  the sequences of recurrence
coefficients $(a_n^{(j)})_{n\in\N}$ and $(b_n^{(j)})_{n\in\N}$ introduced into
sequences $(a_n^{(j)})_{n\in\Z}$, $(b_n^{(j)})_{n\in\Z}$, where we set
$a_n^{(j)}=b_n^{(j)}=0$ for every $n<0$. We thus obtain the representations
\eq
\label{nice path 2}
\langle x^{\alpha}\phi^{(j)}_{n_j},\phi^{(j)}_{m_j}\rangle_{L^2(\mu_j)}=\bv 1_{|n_j-m_j|\leq \alpha} \sum_{\gamma:(0,n_j-m_j)\rightarrow (\alpha,0)} \prod_{e\in\gamma}\omega(e)_{\{(a^{(j)}_{n+m_j}),\,(b^{(j)}_{n+m_j})\}},
\qe
and
\eq
\label{nice path Cheby 2}
\langle x^{\alpha}T_{n_j},T_{m_j}\rangle_{L^2(\mu_{eq})}=\bv 1_{|n_j-m_j|\leq \alpha} \sum_{\gamma:(0,n_j-m_j)\rightarrow (\alpha,0)} \prod_{e\in\gamma}\omega(e)_{\{(a^*_{n+m_j}),\,(b^*_{n+m_j})\}},
\qe
\rev{where we recall that $w(e)$ was introduced in \eqref{weights paths}.}
Since the measures $\mu_j$ are Nevai-class by assumption, we have
$a_n^{(j)}-a_n^*\to 0$ and $b_n^{(j)}-b_n^*\to 0$ as $n\to\infty$ for every
$j\in\{1,\ldots,d\}$. Notice that for every $n_j$, the right-hand side of
\eqref{nice path 2} is a polynomial function of
$a_{m_j-\alpha}^{(j)},b_{m_j-\alpha}^{(j)},\ldots,a_{m_j+\alpha}^{(j)},b_{m_j+\alpha}^{(j)}$
and does not depend on any other recurrence coefficients. Thus, we obtain for every fixed $\alpha\in\N$,
\eq
\label{key rec coef conv}
\sup_{n_j\in\N}\left|\langle x^{\alpha}\phi^{(j)}_{n_j},\phi^{(j)}_{m_j}\rangle_{L^2(\mu_j)} - \langle x^{\alpha}T_{n_j},T_{m_j}\rangle_{L^2(\mu_{eq})}\right|\xrightarrow[m_j\to\infty]{} 0.
\qe

Moreover, we see from \eqref{Error ab}, \eqref{nice path 2} and \eqref{nice path Cheby 2} that $E(\bs n,\bs m)=0$ except when $|n_j-m_j|\leq \min(\alpha_j,\beta_j)$ for every $j\in\{1,\ldots,d\}$. We then split the set of contributing indices into two subsets,
\begin{align*}
\mathbb A_N^{*}& =\Big\{ (\bs n,\bs m)\in\mathbb A_N :\;|n_j-m_j|\leq \min(\alpha_j,\beta_j)\Big\} \cap \Big \{m_j\geq N^{1/2d}\quad \mbox{ for every } j\,\Big\},\\
\mathbb A_N^{0}& =\Big\{ (\bs n,\bs m)\in\mathbb A_N :\;|n_j-m_j|\leq \min(\alpha_j,\beta_j)\Big\}\cap \Big\{ m_j< N^{1/2d}\quad \mbox{ for at least one } j\, \Big\}.
\end{align*}
It then follows from \eqref{key rec coef conv} that
\eq
\label{control E 1}
\lim_{N\to\infty}\sup_{(\bs n,\bs m)\in\mathbb A_N^*}\big|E(\bs n,\bs m)\big| =0
\qe
and that there exists $C>0$ independent on $N$ satisfying
\eq
\label{control E 2}
\sup_{(\bs n,\bs m)\in\mathbb A_N^0}\big|E(\bs n,\bs m)\big|\leq\sup_{(\bs n,\bs m)\in\mathbb A_N}\big|E(\bs n,\bs m)\big| \leq C.
\qe

Next, we write
\begin{align}
& \frac{1}{N^{1-1/d}}
\sum_{(\bs n,\bs m)\in\mathbb A_N}
E(\bs n,\bs m)\nonumber \\
& =\frac{1}{N^{1-1/d}}
\sum_{(\bs n,\bs m)\in\mathbb A_N^*}
E(\bs n,\bs m)+\frac{1}{N^{1-1/d}}
\sum_{(\bs n,\bs m)\in\mathbb A_N^0}
E(\bs n,\bs m)\nonumber\\
&\leq \frac{\big| \mathbb A_N^{*}\big| }{N^{1-1/d}}\sup_{(\bs n,\bs m)\in\mathbb A_N^*}\big|E(\bs n,\bs m)\big|
+
 \frac{\big| \mathbb A_N^{0}\big| }{N^{1-1/d}}\sup_{(\bs n,\bs m)\in\mathbb A_N^0}\big|E(\bs n,\bs m)\big|.
\end{align}
and claim that we have
\eq
\label{A* asympt}
\limsup_{N\to\infty}\frac{\big| \mathbb A_N^{*}\big| }{N^{1-1/d}}<\infty,
\qe
and,  moreover,
\eq
\label{A0 asympt}
\lim_{N\to\infty}\frac{\big| \mathbb A_N^{0}\big| }{N^{1-1/d}}=0.
\qe
Together with \eqref{control E 1}--\eqref{control E 2}, this would prove \eqref{todo cov approx} and thus the proposition.

We finally prove \eqref{A* asympt} and \eqref{A0 asympt} in order to complete the proof of the proposition.
Let us set $\kappa_j=\max(\alpha_j,\beta_j)$ for convenience. Clearly,
\begin{multline}
\label{UB AN* start}
\big|\mathbb A_N^{*}\big| = \Big|\bigcup_{\bs n\in\N^d}\Big\{\bs m\in \N^d :\;  (\bs n,\bs m)\in\mathbb A_N^{*} \Big\}\Big|\\
\leq
\max_{\bs n\in \N^d}\Big|\Big\{\bs m\in \N^d :\;  (\bs n,\bs m)\in\mathbb A_N^{*} \Big\}\Big| \times \Big|\Big\{\bs n\in \N^d :\;   (\bs n,\bs m)\in\mathbb A_N^{*}  \mbox{ for some } \bs m\in\N^d\Big\}\Big|.
\end{multline}
First, since  $|n_j-m_j|\leq \kappa_j$ for every $j$ as soon as  $(\bs n,\bs m)\in \mathbb A_N^*$, we have the upper bound
\eq
\max_{\bs n\in \N^d}\Big|\Big\{\bs m\in \N^d :\;  (\bs n,\bs m)\in\mathbb A_N^{*} \Big\}\Big|\leq \prod_{j=1}^d (2\kappa_j+1).
\qe
Next,  set  $M=\lfloor N^{1/d}\rfloor$  so that $M^d\leq N< (M+1)^d$. If  $(\bs n,\bs m)\in \mathbb A_N^*$, then it satisfies  $\bs n\in \mathcal C_{M+1}$ and $\bs m\in\N^d\setminus\mathcal C_{M}$, where $\mathcal C_M$ has been introduced in \eqref{def C_M}. Namely, it holds that $0\leq n_j\leq M$ for every $j$ and there exists $j_0$ such that $m_{j_0}\geq M$. Together with $|n_{j_0}-m_{j_0}|\leq \kappa_{j_0}$, this yields $M-\kappa_{j_0}\leq n_{j_0}\leq M$ and thus provides the upper bound
\eq
\label{UB AN* final}
 \Big|\Big\{\bs n\in \N^d :\;   (\bs n,\bs m)\in\mathbb A_N^{*}  \mbox{ for some } \bs m\in\N^d\Big\}\Big|
\leq (\max_{{j_0}=1}^d\kappa_{j_0}+1) (M+1)^{d-1}.
\qe
By combining \eqref{UB AN* start}--\eqref{UB AN* final}, we have proved \eqref{A* asympt}.  The proof of  \eqref{A0 asympt} is similar. More precisely, the only difference is that if $(\bs n,\bs m)\in\mathbb A_N^{(0)}$, then there exists $j_1$ such that $m_{j_1}<\sqrt N^{\frac 1d}<\sqrt{M+1}$. Notice that necessarily $j_1\neq j_0$. Using moreover that $|n_{j_1}-m_{j_1}|\leq \kappa_{j_1}$, we  obtain the upper bound
\[
 \Big|\Big\{\bs n\in \N^d :\;   (\bs n,\bs m)\in\mathbb A_N^{(0)}  \mbox{ for some } \bs m\in\N^d\Big\}\Big|
\leq (\max_{\gfrac{j_0=1}{j_0\neq j_1}}^d\kappa_{j_0}+1) (\kappa_{j_1}+\sqrt{M+1}\,) (M+1)^{d-2}
\]
in place of \eqref{UB AN* final}, and  \eqref{A0 asympt} follows.
\end{proof}

\subsection{Extension to $\mathscr C^1$ functions and conclusion}

We consider a reference measure $\mu$ satisfying the assumptions of Theorem~\ref{th
  CLT general} and let $\bx_1,\dots,\bx_N$ be the associated multivariate OP Ensemble. For any $d$-multivariate polynomial $P$, we can write $P=\sum_{\bs k\in\N^d}
\hat P(\bs k) T_{\bs k}$, where the latter sum is finite. As a consequence of Propositions \ref{cov cheby prop} and \ref{general cov}, we then obtain
\begin{align}
\label{prop 33 poly}
&\lim_{N\to\infty} \frac{1}{N^{1-1/d}}\Var\left[\, \sum_{i=1}^{N}P(\bv x_i)\right]\nonumber \\
& = \sum_{\bs k,\bs \ell\in\N^d}\hat P(\bs k)\hat P(\bs \ell) \lim_{N\to\infty} \frac{1}{N^{1-1/d}}\ \Cov\left[\, \sum_{i=1}^{N}T_{\bs k}(\bv x_i),\sum_{i=1}^{N}T_{\bs \ell}(\bv x_i)\right]\nonumber\\
& = \frac12\sum_{\bs k=(k_1,\ldots,k_d)\in\N^d}(k_1+\cdots+k_d)\hat P(\bs k)^2=\sigma_P^2.
\end{align}
Therefore, we have proven Proposition \ref{variance general CLT easy} provided we restrict the test functions to polynomials. We finally extend this result to $\mathscr C^1$ test functions, and thus complete the proof of this proposition, by means of a density argument.

First, a standard computation yields
\eq
\label{variance represent}
\Var\left[\, \sum_{i=1}^{N}f(\bv x_i)\right]\\
= \frac{1}{2}\iint (f(x)-f(y))^2K_{N}(x,y)^2\mu(\d x)\mu(\d y).
\qe
This indeed follows from \eqref{def k cor}--\eqref{def cor DPP} with $k=1,2$,
and that $K_N(x,y)$ is a symmetric reproducing kernel.

Now, for any $f\in\mathscr C^1(I^d,\R)$, we set
\eq
\| f\|_{{\rm Lip}}=\sup_{x\in I^d}\|\nabla f(x) \|,
\qe
so that $|f(x)-f(y)|\leq \| f\|_{{\rm Lip}} \|x-y\|$ for every $x\neq y$. If we
consider the monomials defined by
\eq
\label{ej monom}
e_j(x_1,\ldots,x_d)=x_j,
\qe
then formula \eqref{variance represent} yields
\begin{align*}
\Var\left[\ \sum_{i=1}^N f(\bv x_i)\right] & = \frac12\iint (f(x)-f(y))^2K_N(x,y)^2\mu(\d x)\mu(\d y)\\
& \leq   \| f\|_{{\rm Lip}}^2\sum_{j=1}^d\frac{1}2\iint (e_j(x)-e_j(y))^2K_N(x,y)^2\mu(\d x)\mu(\d y)\\
& =   \| f\|_{{\rm Lip}}^2\sum_{j=1}^d\Var\left[\ \sum_{i=1}^N e_j(\bv x_i)\right]
\end{align*}
and, as a consequence of \eqref{prop 33 poly},
\eq
\limsup_{N\to\infty}\frac{1}{N^{1-1/d}}\Var\left[\ \sum_{i=1}^N f(\bv x_i)
\right]\leq C  \| f\|_{{\rm Lip}}^2,\qquad C=\sum_{j=1}^d \sigma_{e_j}^2.
\qe
Proposition \ref{bound limit var} also provides the upper bound
\eq
\sigma_f^2\leq  \frac1{2}\| f\|_{{\rm Lip}}^2.
\qe
Next, Theorem 5 of \cite{Pee09} yields the existence of a sequence of multivariate polynomials $(P_\epsilon)_{\epsilon>0}$ such that $\|P_\epsilon-f\|_{{\rm Lip}} \leq\epsilon$, and hence
\eq
\label{approx var P eps}
\limsup_{N\to\infty}\frac{1}{N^{1-1/d}}\Var\left[\ \sum_{i=1}^N f(\bv x_i)-\sum_{i=1}^N P_\epsilon(\bv x_i)
\right]\leq C\epsilon^2,\qquad \mbox{ and }\quad\sigma_{f-P_\epsilon}^2\leq \frac{\epsilon^2}{2}.
\qe

Since $(X,Y)\mapsto \Cov(X,Y)$ is a symmetric positive bilinear form, it satisfies the Cauchy-Schwartz inequality, and thus the triangle inequality $\Var(X+Y)^{1/2}\leq \Var(X)^{1/2}+\Var(Y)^{1/2}$  holds true, which in turn yields the inequality
\eq
\label{variance distance}
\left|\Var(X)^{1/2}-\Var(Y)^{1/2}\right|\leq \Var(X-Y)^{1/2}.
\qe
For the same reason, the limiting variance satisfies $\left|\sigma_f-\sigma_g\right|\leq \sigma_{f-g}$. As a consequence, by taking $X=\sum f(\bv x_i)$ and $Y=\sum P_\epsilon(\bv x_i)$ in \eqref{variance distance}, and using these two inequalities together with \eqref{prop 33 poly} and \eqref{approx var P eps}, we obtain by letting $N\to\infty$ and then $\epsilon\to0$ that
\begin{align*}
\lim_{N\to\infty}\frac{1}{N^{1-1/d}}\Var\left[\ \sum_{i=1}^N f(\bv x_i)\right]& = \lim_{\epsilon\to 0}\lim_{N\to\infty}\frac{1}{N^{1-1/d}}\Var\left[\ \sum_{i=1}^N P_\epsilon(\bv x_i)\right]\\
& = \lim_{\epsilon \to 0}\sigma^2_{P_\epsilon}\\
& = \sigma_{f}^2
\end{align*}
and the proof of Proposition \ref{variance general CLT easy} is therefore complete.

\section{Monte Carlo with DPPs: proof of Theorem \ref{DPPMC Th1}}
\label{proof th2 section}
The aim of this section is to prove the following variance decay.

\begin{proposition}
\label{key variance est}
Assume $\mu(\d x)=\omega(x)\d x$ with $\omega$ positive and
$\mathscr C^1$ on $(-1,1)^d$. Assume further that $\mu$ satisfies
Assumption~\ref{a:regularity assumption}. For every $f\in\mathscr C$, we have
\[
\lim_{N\to\infty} \frac{1}{N^{1-1/d}}\mathbb V{\rm ar}\left[\,\sum_{i=1}^N
  \frac{N f(\bx_i)}{K_N(\bx_i,\bx_i)}- \sum_{i=1}^N \frac{\omega(x)f(\bx_i)}{\omega_{eq}^{\otimes d}(\bx_i)} \right]=0.
\]
\end{proposition}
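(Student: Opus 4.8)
The plan is to write the bracketed random variable as a single linear statistic and estimate its variance through the Christoffel--Darboux kernel. Put $r_N(x)=N/K_N(x,x)$ and $r(x)=\omega(x)/\omega_{eq}^{\otimes d}(x)$, so the quantity inside the variance equals $\sum_{i=1}^N\psi_N(\bx_i)$ with $\psi_N:=(r_N-r)f$. Since $f\in\mathscr C$, the set $K:=\Supp f$ is a compact subset of $I^d_\epsilon\subset(-1,1)^d$ for some $\epsilon>0$, and Theorem~\ref{Totik asymp unif} gives $\eta_N:=\sup_{I^d_\epsilon}|r_N-r|\to0$, with $(r_N)$ uniformly bounded on $I^d_\epsilon$ and $r$ Lipschitz there with some constant $L_r$. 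As $\psi_N$ is bounded and supported in $K$, the variance formula \eqref{variance represent} (valid for bounded measurable functions), the symmetry of $K_N(x,y)^2\mu(\d x)\mu(\d y)$ and $\mathbf 1_{\{x\in K\text{ or }y\in K\}}\le\mathbf 1_{\{x\in K\}}+\mathbf 1_{\{y\in K\}}$ yield
\[
\Var\Big[\sum_{i=1}^N\psi_N(\bx_i)\Big]=\frac12\iint(\psi_N(x)-\psi_N(y))^2K_N(x,y)^2\mu(\d x)\mu(\d y)\le\iint_{x\in K}(\psi_N(x)-\psi_N(y))^2K_N(x,y)^2\mu(\d x)\mu(\d y).
\]
For $x\in K$ and any $y\in I^d$ I would use
\[
\psi_N(x)-\psi_N(y)=(r_N(x)-r(x))(f(x)-f(y))+\big[(r_N(x)-r_N(y))-(r(x)-r(y))\big]f(y),
\]
which holds also when $y\notin I^d_\epsilon$ since then $f(y)=0$. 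Using $(a+b)^2\le2a^2+2b^2$, the first summand contributes at most $2\eta_N^2\|f\|_{\mathrm{Lip}}^2\iint\|x-y\|^2K_N(x,y)^2\mu(\d x)\mu(\d y)$; and since $\iint\|x-y\|^2K_N(x,y)^2\mu(\d x)\mu(\d y)=2\sum_{j=1}^d\Var[\sum_i e_j(\bx_i)]$, $e_j(x)=x_j$, is $O(N^{1-1/d})$ by \eqref{prop 33 poly}, this is $o(N^{1-1/d})$. The second summand vanishes unless $y\in K$, so with $\phi_N:=r_N-r$ it remains to prove $J_N:=N^{-(1-1/d)}\iint_{x,y\in K}(\phi_N(x)-\phi_N(y))^2K_N(x,y)^2\mu(\d x)\mu(\d y)\to0$.

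I would split $J_N$ according to $\|x-y\|>\delta$ or $\|x-y\|\le\delta$. On $\{\|x-y\|>\delta\}$ one has $|\phi_N(x)-\phi_N(y)|\le2\eta_N$, so that part of $J_N$ is at most $4\eta_N^2\delta^{-2}N^{-(1-1/d)}\iint\|x-y\|^2K_N^2\mu(\d x)\mu(\d y)=O(\eta_N^2\delta^{-2})$, which tends to $0$ as $N\to\infty$ for each fixed $\delta$. On $\{\|x-y\|\le\delta\}$ write $(\phi_N(x)-\phi_N(y))^2=\|x-y\|^2(\mathscr D_N(x,y)-\mathscr D(x,y))^2$, with $\mathscr D_N$ as in \eqref{e:defDN} and $\mathscr D(x,y):=(r(x)-r(y))/\|x-y\|$, so $|\mathscr D|\le L_r$; recalling \eqref{e:defQN}, that part of $J_N$ equals $\iint_{x,y\in K,\,\|x-y\|\le\delta}(\mathscr D_N-\mathscr D)^2Q_N(\d x,\d y)$, and since $(\mathscr D_N-\mathscr D)^2\le2\mathscr D_N^2+2L_r^2$ and $K\subset I^d_\epsilon$,
\[
\iint_{x,y\in K,\,\|x-y\|\le\delta}(\mathscr D_N-\mathscr D)^2Q_N\le2\iint_{I^d_\epsilon\times I^d_\epsilon,\,\|x-y\|\le\delta}\mathbf 1_{|\mathscr D_N|>C}\,\mathscr D_N^2\,Q_N+(2C^2+2L_r^2)\iint_{\|x-y\|\le\delta}Q_N.
\]
Taking $\limsup_{N\to\infty}$, then $\limsup_{\delta\to0}$, then $C\to\infty$, the first term on the right vanishes by Assumption~\ref{a:regularity assumption}, provided one knows the concentration estimate $\limsup_{\delta\to0}\limsup_{N\to\infty}\iint_{I^d_\epsilon\times I^d_\epsilon,\,\|x-y\|\le\delta}Q_N=0$; granting this and combining with the $\{\|x-y\|>\delta\}$ bound gives $\limsup_NJ_N=0$, and hence the proposition. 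One must be careful that the order of the three limits above is exactly the one in Assumption~\ref{a:regularity assumption}; this is why the $\delta$- and $C$-dependent estimates are arranged as written.

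The step I expect to be the main obstacle is precisely this concentration estimate, i.e. that the mass of $Q_N$ does not accumulate on the diagonal $\{x=y\}$. When $d=1$ it is immediate from Lemma~\ref{convergence CD out diag}, since $Q_N$ then converges weakly to the absolutely continuous measure $L$, which carries no mass on $\{x=y\}$. For $d\ge2$ I would establish it first along the subsequence $N=M^d$: by the factorisation $K_{M^d}(x,y)=\prod_jK_M^{(j)}(x_j,y_j)$ of \eqref{CD kernel split}, $Q_{M^d}$ is a sum over $j$ of the one-dimensional measures $(x_j-y_j)^2K_M^{(j)}(x_j,y_j)^2\mu_j(\d x_j)\mu_j(\d y_j)$ times $\prod_{i\ne j}\tfrac1MK_M^{(i)}(x_i,y_i)^2\mu_i(\d x_i)\mu_i(\d y_i)$; the first factor converges weakly by Lemma~\ref{convergence CD out diag}, while the remaining factors concentrate on the one-dimensional diagonals by the classical Nevai estimate $\phi_N^2/K_N\to0$. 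Hence $Q_{M^d}$ converges weakly to a measure carried by the partial diagonals $\{x_i=y_i\ \forall i\ne j\}$, which still charges the full diagonal with zero mass, so $\iint_{\|x-y\|\le\delta}Q_{M^d}\to\iint_{\|x-y\|\le\delta}Q_\infty\to0$ as $\delta\to0$; the passage to arbitrary $N$ then follows from $M^d\le N<(M+1)^d$ together with a monotonicity/interpolation argument of the type used in the proof of Corollary~\ref{weak conv eq}.
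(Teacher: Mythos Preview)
Your overall strategy---variance formula, diagonal/off-diagonal split, and the use of Assumption~\ref{a:regularity assumption} combined with a ``concentration estimate'' for $Q_N$ near the diagonal---is essentially the paper's proof, with only cosmetic differences in how the product $fE_N$ is broken up. You have also correctly identified the concentration statement
\[
\limsup_{\delta\to0}\limsup_{N\to\infty}\iint_{\|x-y\|\le\delta}Q_N(\d x,\d y)=0
\]
as the crux; this is exactly Lemma~\ref{diagonal est} in the paper.

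The gap is in your proposed proof of that estimate for general $N$. Your argument along the subsequence $N=M^d$ via the factorisation~\eqref{CD kernel split} is fine, but the passage to arbitrary $N$ by ``a monotonicity/interpolation argument of the type used in Corollary~\ref{weak conv eq}'' does not go through. That corollary relied on the pointwise monotonicity $K_{M^d}(x,x)\le K_N(x,x)\le K_{(M+1)^d}(x,x)$, which comes from $K_N(x,x)=\sum_{k<N}\phi_k(x)^2$ being a sum of nonnegative terms. No such monotonicity holds for $K_N(x,y)^2$ off the diagonal, nor for the integrated quantity $\iint_{\|x-y\|\le\delta}\|x-y\|^2K_N(x,y)^2\mu\,\mu$. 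If instead you try the crude sandwich $K_N^2\le 2K_{M^d}^2+2R_N^2$ with $R_N=K_N-K_{M^d}$, the remainder is not negligible: $R_N$ is the projection kernel onto $\{\phi_{\frak b(k)}:M^d\le k<N\}$, so $\iint\|x-y\|^2R_N(x,y)^2\mu\,\mu$ can be of the same order $M^{d-1}\sim N^{1-1/d}$ as the main term (indeed $\int R_N(x,x)\mu(\d x)=N-M^d$ can be as large as $dM^{d-1}$).

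The paper avoids this issue by not reducing to $N=M^d$ at all. For each coordinate~$j$ it groups the multi-indices $\bs k\in\Gamma_N=\{\frak b(0),\dots,\frak b(N-1)\}$ into equivalence classes $[\bs k]$ according to their $(d-1)$ coordinates other than the $j$th, and writes
\[
K_N(x,y)=\sum_{[\bs k]\in\Gamma_N/\sim}K^{(j)}_{N_j([\bs k])+1}(x_j,y_j)\prod_{\alpha\ne j}\phi^{(\alpha)}_{k_\alpha}(x_\alpha)\phi^{(\alpha)}_{k_\alpha}(y_\alpha).
\]
Squaring and integrating over the variables $\alpha\ne j$, orthogonality kills all cross terms and leaves a sum over classes of one-dimensional contributions $\iint_{|x_j-y_j|\le\delta}(x_j-y_j)^2\big(K^{(j)}_{N_j([\bs k])+1}\big)^2\mu_j\,\mu_j$, each of which is handled by Lemma~\ref{convergence CD out diag}. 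The number of classes is $O(M^{d-1})$, and those with $N_j([\bs k])$ not large (say $<\sqrt M$) are few enough to be discarded. This decomposition, rather than any monotonicity in~$N$, is what makes the estimate work for arbitrary~$N$.
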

Before proving Proposition~\ref{key variance est}, we argue that it implies
Theorem~\ref{DPPMC Th1}. Indeed, \eqref{variance distance} then implies that
$$ \lim_{N\to\infty} \frac{1}{N^{1-1/d}}\mathbb V{\rm ar}\left[\,\sum_{i=1}^N
  \frac{N f(\bx_i)}{K_N(\bx_i,\bx_i)}\right] = \lim_{N\to\infty} \frac{1}{N^{1-1/d}}\mathbb V{\rm ar}\left[\,\sum_{i=1}^N \frac{\omega(x)f(\bx_i)}{\omega_{eq}^{\otimes d}(\bx_i)} \right]=\Omega_{f,\omega}^2\,,$$
the last equality following from Theorem~\ref{th CLT general}. Now
Theorem~\ref{Sosh} applies with $f_N(x) = Nf(x)/K_N(x,x)$ to yield
Theorem~\ref{DPPMC Th1}.

From now on, we fix $f\in\mathscr C$. It is thus a $\mathscr C^1$ function and
there exists $\epsilon>0$ so that $\mathrm{Supp}(f)\subset I_\epsilon^d$. If we
set for convenience
$$ E_N(x) =  \frac{N}{K_N(x,x)}- \sum_{i=1}^N
\frac{\omega(x)}{\omega_{eq}^{\otimes d}(x)}, \qquad x\in I^d,  $$
then Theorem~\ref{Totik asymp unif} yields $\|fE_N\|_\infty = \sup_{I^d}\vert
fE_N \vert\to0$ as $N\to\infty$.

In order to prove Proposition~\ref{key variance est}, we start with the formula coming from \eqref{variance represent},
\[
\mathbb V{\rm ar}\left[\,\sum_{i=1}^N   f(\bv x_i) E_N(\bv x_i)\right]=\frac12\iint \big(f E_N(x)-f E_N(y)\big)^2K_N(x,y)^2\mu(\d x)\mu(\d y)
\]
and split the integral in several terms that we shall analyse separately.

\subsection{The off-diagonal contribution}
Given any $\delta>0$, we first consider  the contribution
\eq
\label{offdiag}
\frac12\iint_{\|x-y\|>\delta} \big(fE_N(x)-fE_N(y)\big)^2K_N(x,y)^2\mu(\d x)\mu(\d y).
\qe
By rough estimates, we obtain
 \begin{align*}
 \eqref{offdiag} & \leq \| f E_N\|_\infty^2\iint_{\|x-y\|>\delta}K_N(x,y)^2\mu(\d x)\mu(\d y)\\
 & \leq  \frac{1}{\delta^2}\| f E_N\|_\infty^2\sum_{j=1}^d\iint (x_j-y_j)^2K_N(x,y)^2\mu(\d x)\mu(\d y)\\
 & \leq  \frac{2}{\delta^2}\| f E_N\|_\infty^2\sum_{j=1}^d  \mathbb V{\rm ar}\left[\,\sum_{i=1}^N  e_j(\bv x_i)\right],
 \end{align*}
where the monomials $e_j$ were defined in \eqref{ej monom}. As a consequence,
using Proposition~\ref{variance general CLT easy} and that $\| f E_N\|_\infty\to
0$ as $N\to\infty$, we get
\eq
\lim_{N\to\infty} \frac{1}{N^{1-1/d}}\iint_{\|x-y\|>\delta} \big(fE_N(x)-fE_N(y)\big)^2K_N(x,y)^2\mu(\d x)\mu(\d y)=0
\label{e:offDiagonal}
\qe
for every $\delta>0$.

\subsection{The diagonal contribution}

By \eqref{e:offDiagonal}, it is sufficient to show
\eq
\label{diag obj}
\limsup_{\delta\to 0}\limsup_{N\to\infty} \frac{1}{N^{1-1/d}}\iint_{\|x-y\|\leq\delta} \big(fE_N(x)-fE_N(y)\big)^2K_N(x,y)^2\mu(\d x)\mu(\d y)=0
\qe
in order to complete the proof of Proposition~\ref{key variance est}.

Set for convenience
\eq
\label{hey Dj !}
\mathscr D_g(x,y)= \frac{g(x)-g(y)}{\|x-y\|}, \qquad x,y\in I^d,
\qe
so that, $|\mathscr D_g(x,y)|\leq \|g\|_{\rm{Lip}}$. For every $\delta>0$ small enough, we have for any $x,y$ satisfying $\|x-y\|\leq \delta$,
\begin{align*}
\mathscr D_{f E_N}(x,y)^2 & =(\mathscr D_{f}(x,y) E_N(x)+\mathscr D_{E_N}(x,y) f(y))^2\\
& \leq 2\mathscr D_{f}(x,y)^2 E_N(x)^2+2\mathscr D_{E_N}(x,y)^2 f(y)^2\\
& \leq 2\|f\|_{\rm{Lip}}^2 \|\bs 1_{I^d_{\epsilon/2}}E_N\|_\infty^2+2\|f\|_\infty^2 \mathscr D_{E_N}(x,y)^2\ \bs 1_{I^d_{\epsilon/2}\times I^d_{\epsilon/2}}(x,y).
\end{align*}
Indeed, notice that if $x\in I^d_\epsilon$ or $y\in I^d_\epsilon$, then $x,y\in
I^d_{\epsilon/2}$ for every $\delta>0$ small enough. Since $f$ is by assumption
supported on $I^d_\epsilon$, we know that $\mathscr D_{f E_N}(x,y)$ vanishes
outside of $I^d_{\epsilon/2}\times I^d_{\epsilon/2}$. This is the reason for the
presence of $\IND_{I^d_{\epsilon/2}}$ in the last inequality.

With the notation $Q_N$ introduced in \eqref{e:defQN}, we thus obtain
\begin{align}
\label{ineq diag A}
&\frac1{2N^{1-1/d}}\iint_{\|x-y\|\leq \delta} \big(fE_N(x)-fE_N(y)\big)^2K_N(x,y)^2\mu(\d x)\mu(\d y)\nonumber\\
 & = \frac12\iint_{\|x-y\|\leq \delta} \mathscr D_{f E_N}(x,y)^2Q_N(\d x,\d y)\nonumber\\
 & \leq  \; \|f\|_{\rm{Lip}}^2 \|\bs 1_{I^d_{\epsilon/2}}E_N\|_\infty^2\iint_{\|x-y\|\leq \delta} Q_N(\d x,\d y)\\
 & \qquad +\|f\|_\infty^2 \iint_{I^d_{\epsilon/2}\times I^d_{\epsilon/2},\ \|x-y\|\leq \delta} \mathscr D_{E_N}(x,y)^2 Q_N(\d x,\d y)\nonumber.
\end{align}
Moreover,  because $\omega$ is $\mathscr C^1$ on $I_{\epsilon/2}^d$ by assumption, and because $\omega_{eq}^{\otimes d}$ is also $\mathscr C^1$ and positive there, one similarly has, for every $x,y\in I_{\epsilon/2}^d$,
\eq
\label{ineq diag B}
\mathscr D_{E_N}(x,y)^2\leq 2 \mathscr D_{N}(x,y)^2 +2 \mathscr D_{\omega/\omega_{eq}^{\otimes d}}(x,y)^2\leq 2 \mathscr D_{N}(x,y)^2 +2\|\omega/\omega_{eq}^{\otimes d}\|_{\mathrm{Lip}},
\qe
where $\mathscr D_N$ is defined in \eqref{e:defDN}.

Next, we have for every $C>0$,
\begin{align}
& \iint_{I^d_{\epsilon/2}\times I^d_{\epsilon/2},\ \|x-y\|\leq \delta} \mathscr D_{N}(x,y)^2 Q_N(\d x,\d y)\nonumber\\
& \leq C^2\iint_{\|x-y\|\leq \delta} Q_N(\d x,\d y)\nonumber\\
& \qquad + \iint_{I^d_{\epsilon/2}\times I^d_{\epsilon/2},\ \|x-y\|\leq \delta} \bv 1_{|\mathscr D_{N}(x,y)|> C}\ \mathscr D_{N}(x,y)^2 Q_N(\d x,\d y).\label{alligator427}
\end{align}

We now make use of the following lemma, the proof of which is deferred to Section~\ref{s:lemmaProof}.
 \begin{lemma}
  \label{diagonal est}
   \eq
 \lim_{\delta\to0}\limsup_{N\to\infty} \iint_{\|x-y\|\leq \delta} Q_N(\d x,\d y)=0.
 \qe
\end{lemma}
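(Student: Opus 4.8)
The plan is to establish Lemma~\ref{diagonal est} by reducing it to the weak convergence of $Q_N$ supplied by Lemma~\ref{convergence CD out diag}, or rather its higher-dimensional counterpart built from the factorization of the kernel. First I would recall that, by \eqref{def k cor}--\eqref{def cor DPP} with $k=1,2$ and the reproducing property of $K_N$, one has the exact identity
\eq
\iint \|x-y\|^2 K_N(x,y)^2\mu(\d x)\mu(\d y)=\sum_{j=1}^d\iint (x_j-y_j)^2K_N(x,y)^2\mu(\d x)\mu(\d y)=2\sum_{j=1}^d\Var\Big[\sum_{i=1}^N e_j(\bv x_i)\Big],
\qe
with $e_j$ the coordinate monomial \eqref{ej monom}. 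Hence $\iint Q_N(\d x,\d y)=\tfrac{2}{N^{1-1/d}}\sum_{j=1}^d\Var[\sum_i e_j(\bv x_i)]$, which by Proposition~\ref{variance general CLT easy} (applied to the polynomials $e_j$) converges as $N\to\infty$ to the finite constant $2\sum_{j=1}^d\sigma_{e_j}^2$. In particular the total masses $Q_N(I^d\times I^d)$ are bounded, so the family $(Q_N)$ is tight and we may work in the usual weak topology for bounded measures.

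Next I would identify the limit of $Q_N$. Writing $M=\lfloor N^{1/d}\rfloor$ and using $K_{M^d}(x,y)=\prod_{j=1}^d K_M^{(j)}(x_j,y_j)$ from \eqref{CD kernel split}, together with $\|x-y\|^2=\sum_j(x_j-y_j)^2$, one expands $\|x-y\|^2K_N(x,y)^2$ as a sum over $j$ of $(x_j-y_j)^2K_M^{(j)}(x_j,y_j)^2\prod_{\alpha\neq j}K_M^{(\alpha)}(x_\alpha,y_\alpha)^2$; after dividing by $N^{1-1/d}\approx M^{d-1}$ one factor carries the normalization $(x_j-y_j)^2K_M^{(j)\,2}$ (mass $\to$ a constant, by Lemma~\ref{convergence CD out diag}) and each of the remaining $d-1$ factors carries a $\tfrac1M K_M^{(\alpha)\,2}$, whose limit is $\mu_{eq}$ concentrated on the diagonal $\{x_\alpha=y_\alpha\}$ (this is precisely the content of Lemma~\ref{convergence CD by N}, stated in the commented-out section; alternatively it follows from Nevai's estimate $\phi_{N}^2/K_N\to0$ uniformly). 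Interchanging $M$ and $M+1$ by the sandwich $K_{M^d}\le K_N\le K_{(M+1)^d}$ handles general $N$. The upshot is that $Q_N$ converges weakly to a finite measure $L$ on $I^d\times I^d$ that is \emph{supported on the union of the partial diagonals} $\bigcup_{j=1}^d\{x_i=y_i\ \forall i\ne j\}$; in particular $L$ charges no mass on the set $\{\|x-y\|\ge\delta\}$ whose closure also avoids these partial diagonals once $\delta>0$... wait — one must be careful: the partial diagonal $\{x_i=y_i\ \forall i\ne j\}$ does intersect $\{\|x-y\|\ge\delta\}$, through the $x_j$ coordinate. So instead I would argue directly: $\iint_{\|x-y\|\le\delta}Q_N=\iint Q_N-\iint_{\|x-y\|>\delta}Q_N$, and along the subsequence where $Q_N\to L$ weakly, since $\{\|x-y\|>\delta\}$ is open, $\liminf_N\iint_{\|x-y\|>\delta}Q_N\ge \iint_{\|x-y\|>\delta}L=L(I^d\times I^d)-L(\{\|x-y\|\le\delta\})$, and $L(\{\|x-y\|\le\delta\})\to0$ as $\delta\to0$ because $L$ has no mass on the full diagonal $\{x=y\}$ — indeed $L=\tfrac12\sum_j L_j$ where each $L_j$ restricts to $\{x_i=y_i,i\ne j\}$ with an absolutely continuous, non-degenerate law of $(x_j,y_j)$ (the $d=1$ limit $\tfrac12(1-xy)\mu_{eq}(\d x)\mu_{eq}(\d y)$ of Lemma~\ref{convergence CD out diag}), so $L(\{x_j=y_j\ \forall j\})=0$. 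Combining, $\limsup_N\iint_{\|x-y\|\le\delta}Q_N\le L(\{\|x-y\|\le\delta\})\to0$.

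The main obstacle, and the point requiring the most care, is making rigorous the claim that the $d-1$ ``extra'' factors $\tfrac1M K_M^{(\alpha)}(x_\alpha,y_\alpha)^2\mu_\alpha(\d x_\alpha)\mu_\alpha(\d y_\alpha)$ collapse onto their diagonal in a way compatible with the simultaneous weak convergence of the $j$-th factor and with integrating the \emph{discontinuous} test function $\bv 1_{\|x-y\|\le\delta}$ (the portmanteau argument only gives inequalities, which is why I would phrase everything via $\liminf$ on the open set $\{\|x-y\|>\delta\}$ rather than trying to pass to the limit inside $\{\|x-y\|\le\delta\}$ directly). Concretely I would (i) reduce to a subsequence realizing $\limsup_N\iint_{\|x-y\|\le\delta}Q_N$ and, along it, extract a further subsequence with $Q_N\Rightarrow L$ using tightness; (ii) identify $L$ on product test functions $\prod_j f_j(x_j)g_j(y_j)$ via Lemmas~\ref{convergence CD out diag} and~\ref{convergence CD by N} and Stone--Weierstrass, exactly as in the proof of Corollary~\ref{weak conv eq}; (iii) apply the portmanteau theorem to the open set $\{\|x-y\|>\delta\}$; and (iv) let $\delta\to0$ using $L(\{x=y\})=0$. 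Since the bound on $\limsup_N\iint_{\|x-y\|\le\delta}Q_N$ obtained this way is uniform over such subsequences, Lemma~\ref{diagonal est} follows.
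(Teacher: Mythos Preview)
Your strategy—show tightness of $(Q_N)$, identify its weak limit $L$, then use portmanteau on the open set $\{\|x-y\|>\delta\}$ together with $L(\{x=y\})=0$—is conceptually clean and would yield the lemma, but there is a genuine gap in the passage from $N=M^d$ to general $N$. The sandwich $K_{M^d}\le K_N\le K_{(M+1)^d}$ holds \emph{on the diagonal}, because $K_N(x,x)=\sum_{k<N}\phi_k(x)^2$ is monotone in $N$; that is exactly what the proof of Corollary~\ref{weak conv eq} exploits. Off the diagonal, however, neither $K_N(x,y)$ nor $K_N(x,y)^2$ is monotone in $N$, so the inequality you invoke is simply false. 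Consequently the factorization $K_{M^d}(x,y)=\prod_j K_M^{(j)}(x_j,y_j)$, which is what lets you apply Lemma~\ref{convergence CD out diag} and the diagonal-collapse lemma coordinate by coordinate, is unavailable for general $N$, and your identification of the weak limit along arbitrary subsequences is not justified as written.

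The paper avoids this issue by never attempting to identify the full weak limit of $Q_N$. It first relaxes the constraint, using
\[
\iint_{\|x-y\|\le\delta}Q_N(\d x,\d y)\le \frac1{N^{1-1/d}}\sum_{j=1}^d\iint_{|x_j-y_j|\le\delta}(x_j-y_j)^2K_N(x,y)^2\mu(\d x)\mu(\d y),
\]
and then, for each fixed $j$, decomposes the index set $\Gamma_N=\{\frak b(0),\dots,\frak b(N-1)\}$ into equivalence classes of multi-indices that agree in all coordinates except the $j$-th. This decomposition is valid for \emph{every} $N$, not just perfect $d$-th powers. Integrating out the variables $x_\alpha,y_\alpha$ for $\alpha\ne j$ by orthonormality collapses the expression to a sum, over the classes, of one-dimensional quantities $\iint_{|x_j-y_j|\le\delta}(x_j-y_j)^2K^{(j)}_{m}(x_j,y_j)^2\mu_j(\d x_j)\mu_j(\d y_j)$. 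A counting bound on the number of classes with given maximal $j$-index, combined with Lemma~\ref{convergence CD out diag} applied to each one-dimensional factor, produces the upper bound $d\iint_{|x-y|\le\delta}L(\d x,\d y)$, which tends to $0$ with $\delta$.

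If you wish to repair your route, you would need an independent argument that every weak limit point of $(Q_N)_{N\ge1}$ agrees with the one you compute along $N=M^d$—for example by establishing convergence of all mixed moments $\iint x^{\bs\alpha}y^{\bs\beta}Q_N(\d x,\d y)$ for general $N$, in the spirit of Proposition~\ref{general cov}. This is feasible but noticeably heavier than the paper's direct computation.
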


As a consequence, \eqref{ineq diag A}, \eqref{ineq diag B}, and
\eqref{alligator427} together yield, for every $C>0$,
\begin{multline}
\limsup_{\delta\to 0}\limsup_{N\to\infty}\frac1{2N^{1-1/d}}\iint_{\|x-y\|\leq \delta} \big(fE_N(x)-fE_N(y)\big)^2K_N(x,y)^2\mu(\d x)\mu(\d y)\\
\leq 2\|f\|_\infty^2 \limsup_{\delta\to
  0}\limsup_{N\to\infty}\iint_{I^d_{\epsilon/2}\times I^d_{\epsilon/2},\
  \|x-y\|\leq \delta} \mathscr D_{N}(x,y)^2 \bv 1_{|\mathscr D_{N}(x,y)|> C} \,Q_N(\d x,\d y).
\end{multline}

Assumption~\ref{a:regularity assumption} allows us to conclude the proof of
Proposition~\ref{key variance est}, up to the proof of Lemma~\ref{diagonal est}.

\subsection{Proof of Lemma \ref{diagonal est}}
\label{s:lemmaProof}

 \begin{proof}
 First,
 \eq
 \iint_{\|x-y\|\leq \delta}  Q_N(\d x,\d y) \leq \frac1{N^{1-1/d}}\sum_{j=1}^d
 \iint_{|x_j-y_j|\leq \delta}  (x_j-y_j)^2K_N(x,y)^2\mu(\d x)\mu(\d y).
\label{e:toolLemma62Bis}
 \qe
We fix $j\in\{1,\dots,d\}$ and use the notation of the proof of Lemma~\ref{l:separability}. It comes
 \[
K_N(x,y)= \sum_{[\bs k]\in \Gamma_N/\sim}K^{(j)}_{N_j([\bs
  k])+1}(x_j,y_j)\prod_{\alpha\neq j}\phi_{k_\alpha}(x_\alpha)\phi_{k_\alpha}(y_\alpha) .
  \]
  Squaring, integrating and using the orthonormality relations,
  \begin{align}
&  \iint_{|x_j-y_j|\leq \delta}  (x_j-y_j)^2K_N(x,y)^2\mu(\d x)\mu(\d y)\nonumber\\
  &  = \sum_{[\bs k],[\bs \ell] \in \Gamma_N/\sim} \bs 1_{\sigma(\bs k)=\sigma(\bs \ell)}\nonumber\\
  & \qquad  \times\iint_{|x_j-y_j|\leq\delta} (x_j-y_j)^2K^{(j)}_{N_j([\bs k])+1}(x_j,y_j)K^{(j)}_{N_j([\bs \ell])+1}(x_j,y_j)\mu_j(\d x_j)\mu_j(\d y_j)\nonumber\\
    &   = \sum_{[\bs k]\in \Gamma_N/\sim}\iint_{|x_j-y_j|\leq\delta} (x_j-y_j)^2K^{(j)}_{N_j([\bs k])+1}(x_j,y_j)^2\mu_j(\d x_j)\mu_j(\d y_j).
\label{e:toolLemma62}
  \end{align}
Recall $M=\lfloor N^{1/d}\rfloor$ and $\mathcal{C}_M\subset\Gamma_N\subset\mathcal{C}_{M+1}$. By definition of
$\frak b$ we have, for every $1\leq m\leq M-2$,
$$ \vert \{ [\bs k]\in \Gamma_N/\sim \;:\; N_j([\bs k]) = m \}\vert \leq dM^{d-2}.$$
Notice also that \eqref{LN CD} yields
$$
\iint (x_j-y_j)^2 K^{(j)}_{m}(x_j,y_j)^2\mu_j(\d x_j)\mu_j(\d y_j) = 2a_m^2,
$$
which is bounded for every $m$ since $a_m\rightarrow 1/2$ by assumption. Now
\begin{eqnarray}
\eqref{e:toolLemma62} &=& \left[ \sum_{\gfrac{[\bs k]\in
                             \Gamma_N/\sim}{N_j([\bs k])< \sqrt{M}}} + \sum_{\gfrac{[\bs k]\in
                             \Gamma_N/\sim}{N_j([\bs k])\geq \sqrt{M}}}  \right] \iint_{|x_j-y_j|\leq\delta} (x_j-y_j)^2K^{(j)}_{N_j([\bs k])+1}(x_j,y_j)^2\mu_j(\d x_j)\mu_j(\d y_j) \nonumber\\
&\leq& \cO(M^{d-2+1/2}) \nonumber\\
&&\quad+ M^{d-1}\max_{\sqrt{M}\leq m \leq M}
       \iint_{|x_j-y_j|\leq\delta} (x_j-y_j)^2K^{(j)}_{m+1}(x_j,y_j)^2\mu_j(\d
       x_j)\mu_j(\d y_j).
\label{e:toolLemma62Ter}
\end{eqnarray}
Moreover, Lemma~\ref{convergence CD out diag} yields
\[
\max_{\sqrt{M}\leq m \leq M} \iint_{|x_j-y_j|\leq\delta} (x_j-y_j)^2K^{(j)}_{m+1}(x_j,y_j)^2\mu_j(\d
 x_j)\mu_j(\d y_j) \rightarrow \iint_{|x-y|\leq\delta}L(x,y)\d x\d y
\]
as $M\rightarrow \infty$. Combined with \eqref{e:toolLemma62Bis}--\eqref{e:toolLemma62Ter}, we obtain
$$
\limsup_{N\rightarrow \infty}\iint_{\|x-y\|\leq \delta}  Q_N(\d x,\d y) \leq d\iint_{|x-y|\leq\delta}L(x,y)\d x\d y.
$$
Finally, since $L$ is integrable, the lemma follows by letting $\delta\to0$.
\end{proof}
The proof of Proposition~\ref{key variance est} is therefore complete.

\newpage
\section{Additional figures for the experiments of Section~\ref{s:experiments}}
\label{a:figures}

\rev{In Figure~\ref{f:resultsIS}, we display the results of the linear regression in Section~\ref{s:ass}. In Figure~\ref{f:resultsAss}, we display those of the linear regression in Section~\ref{s:ass}.}

\begin{figure}[h]
\subfigure[$d=1$]{
\includegraphics[width=\twofig]{\figdir/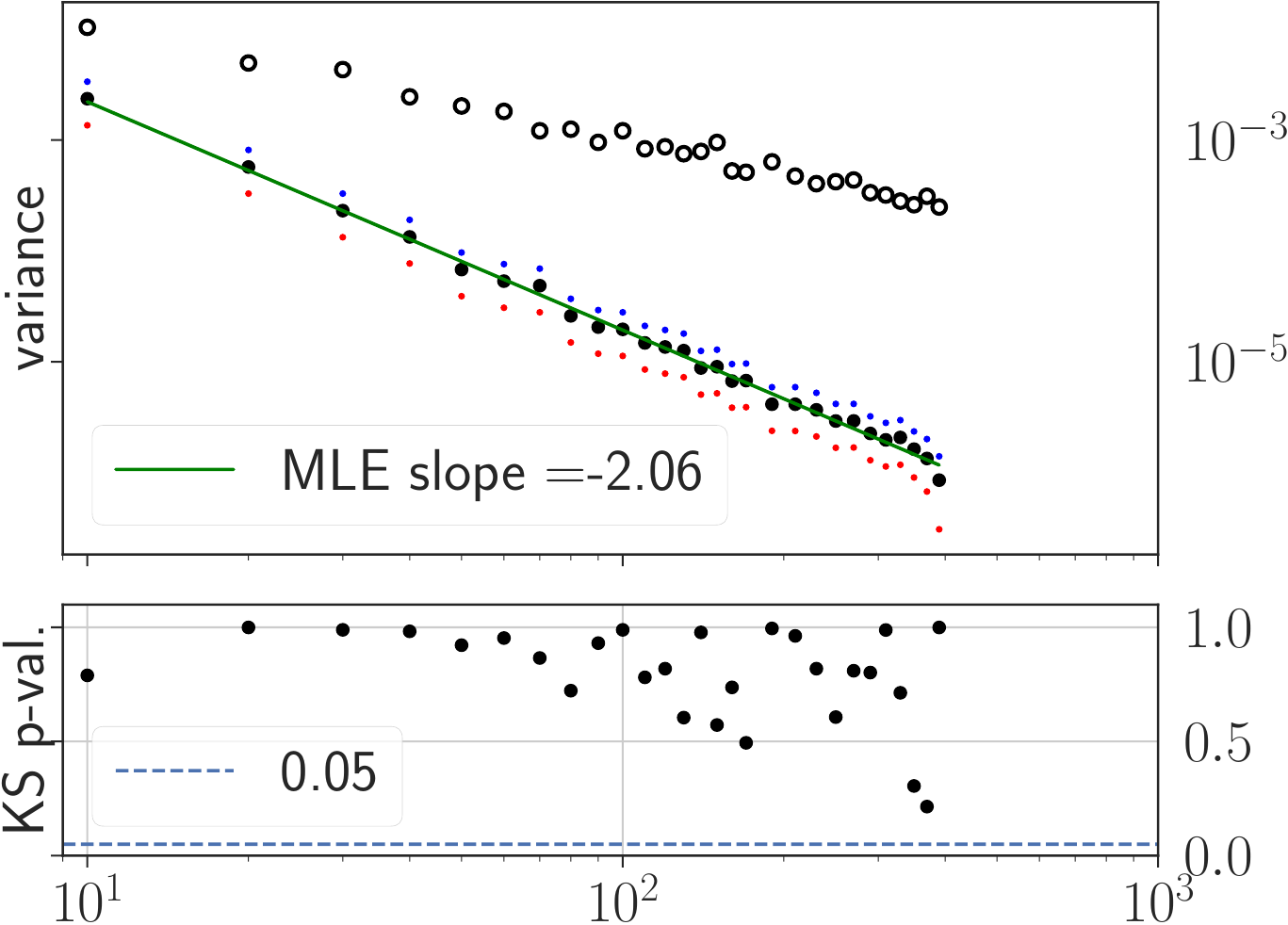}
\label{f:resultsIS1D}
}
\subfigure[$d=2$]{
\includegraphics[width=\twofig]{\figdir/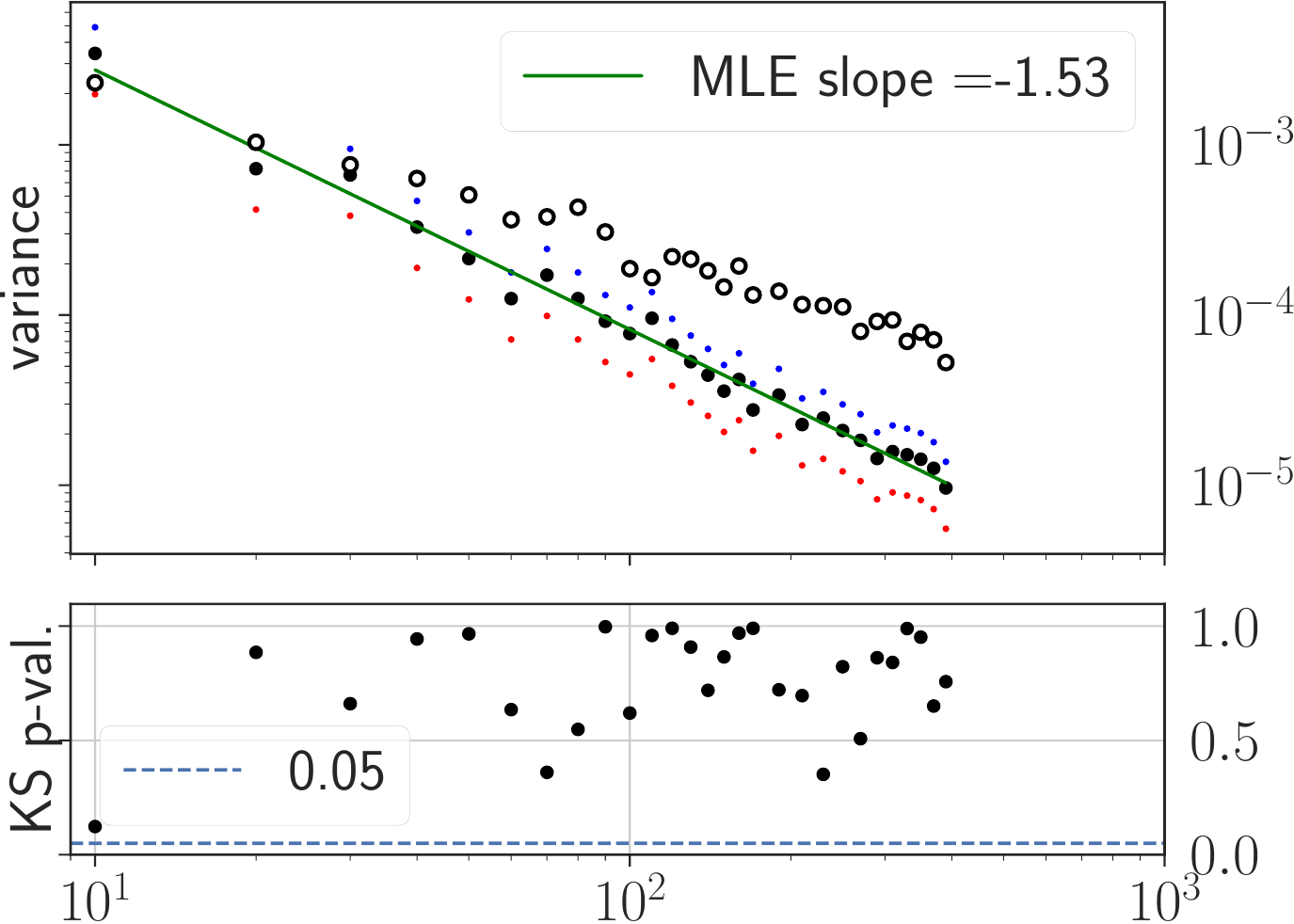}
\label{f:resultsIS2D}
}\\
\centering
\subfigure[$d=3$]{
\includegraphics[width=\twofig]{\figdir/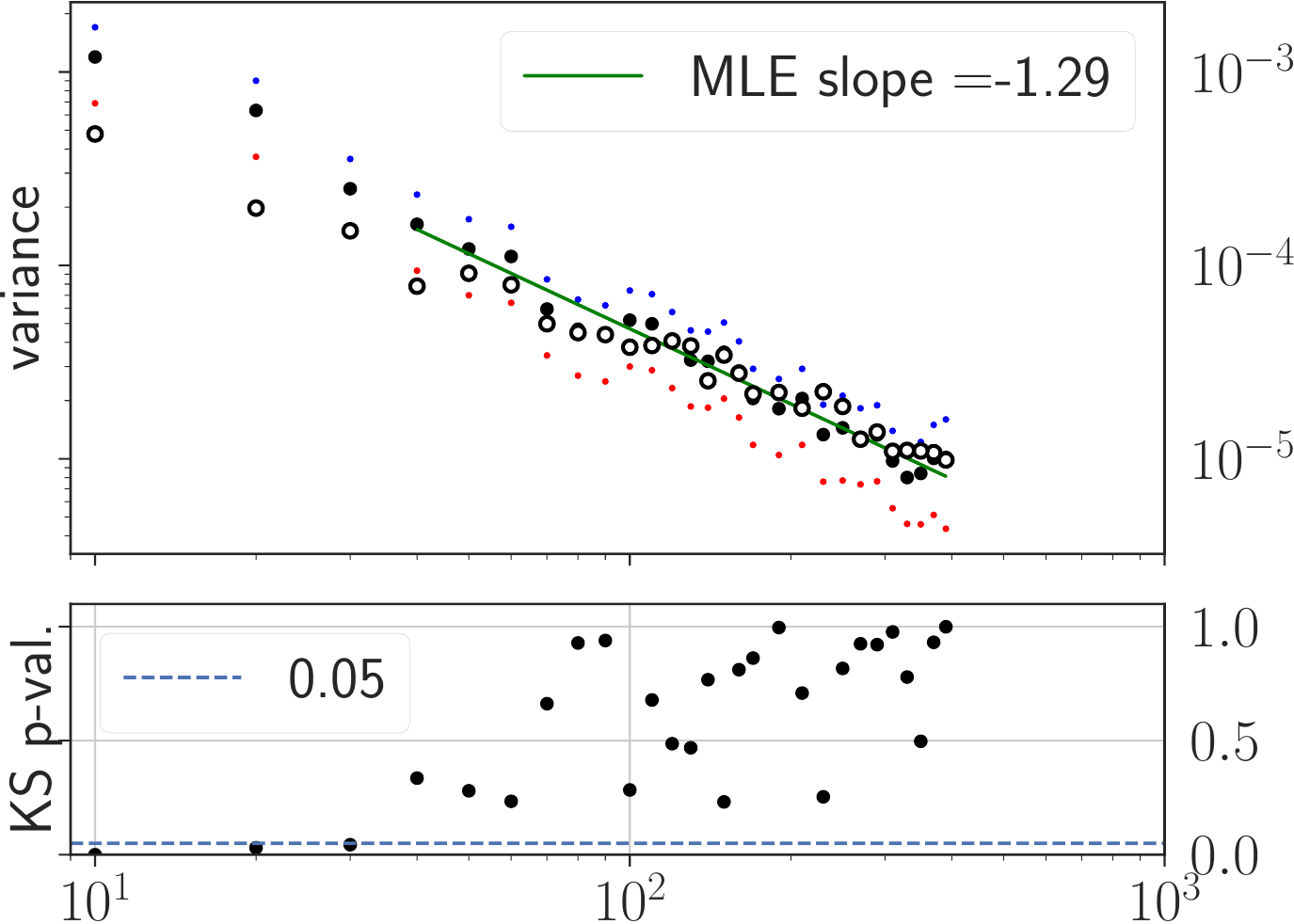}
\label{f:resultsIS3D}
}
\caption{Summary of the importance sampling results.}
\label{f:resultsIS}
\end{figure}

\begin{figure}
\subfigure[$d=1$]{
\includegraphics[width=\twofig]{\figdir/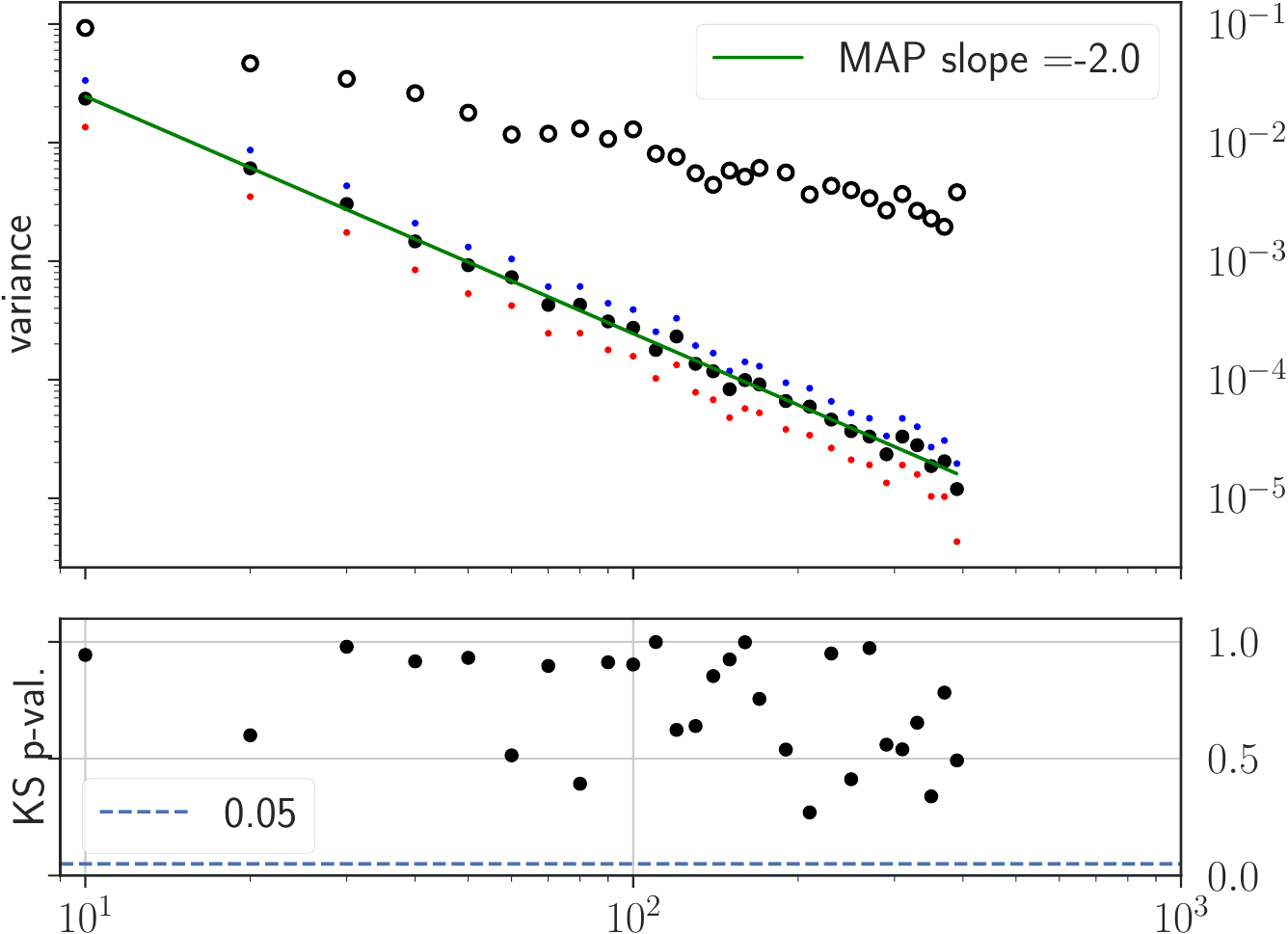}
\label{f:resultsAss1D}
}
\subfigure[$d=2$]{
\includegraphics[width=\twofig]{\figdir/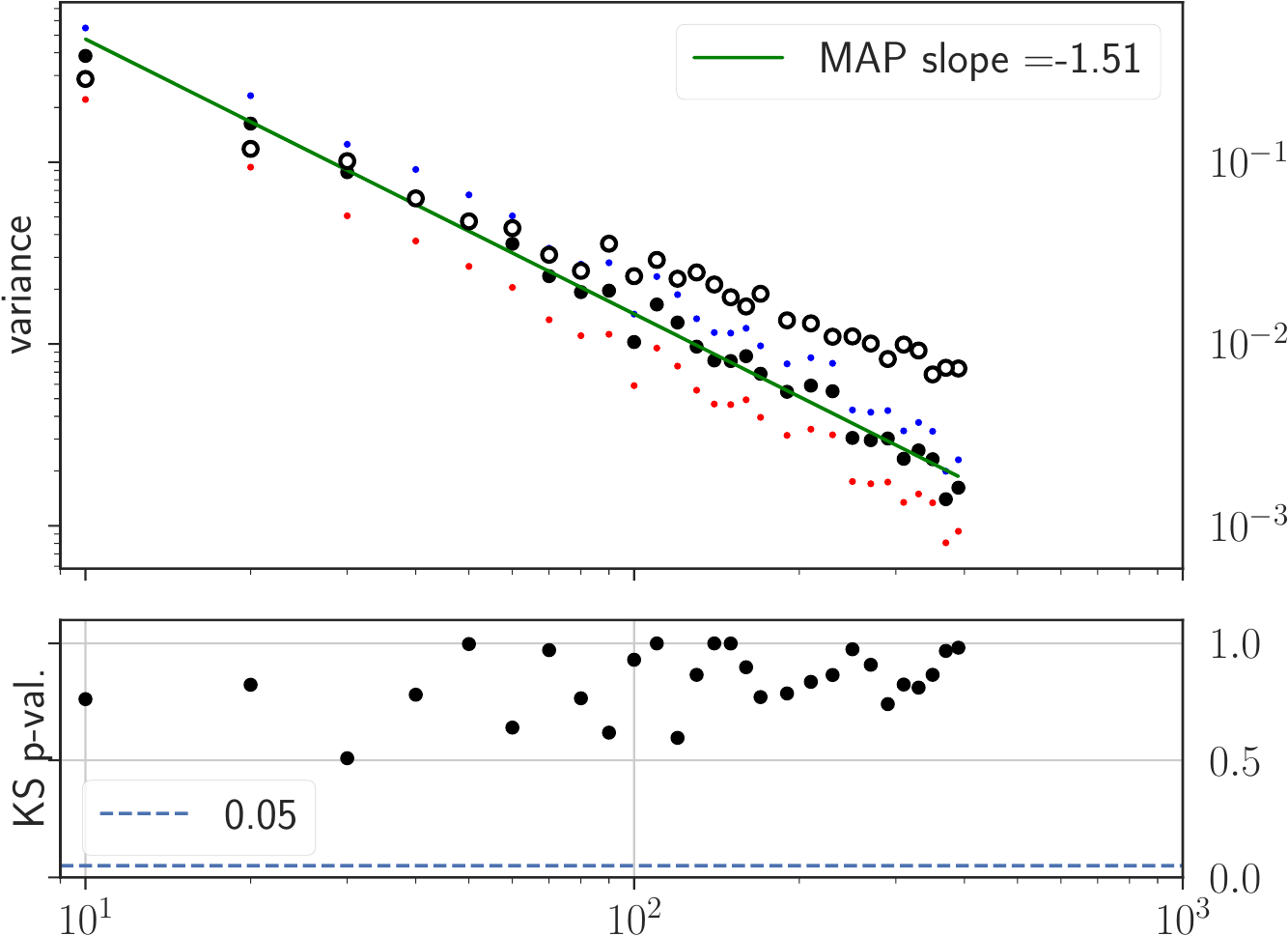}
\label{f:resultsAss2D}
}\\
\centering
\subfigure[$d=3$]{
\includegraphics[width=\twofig]{\figdir/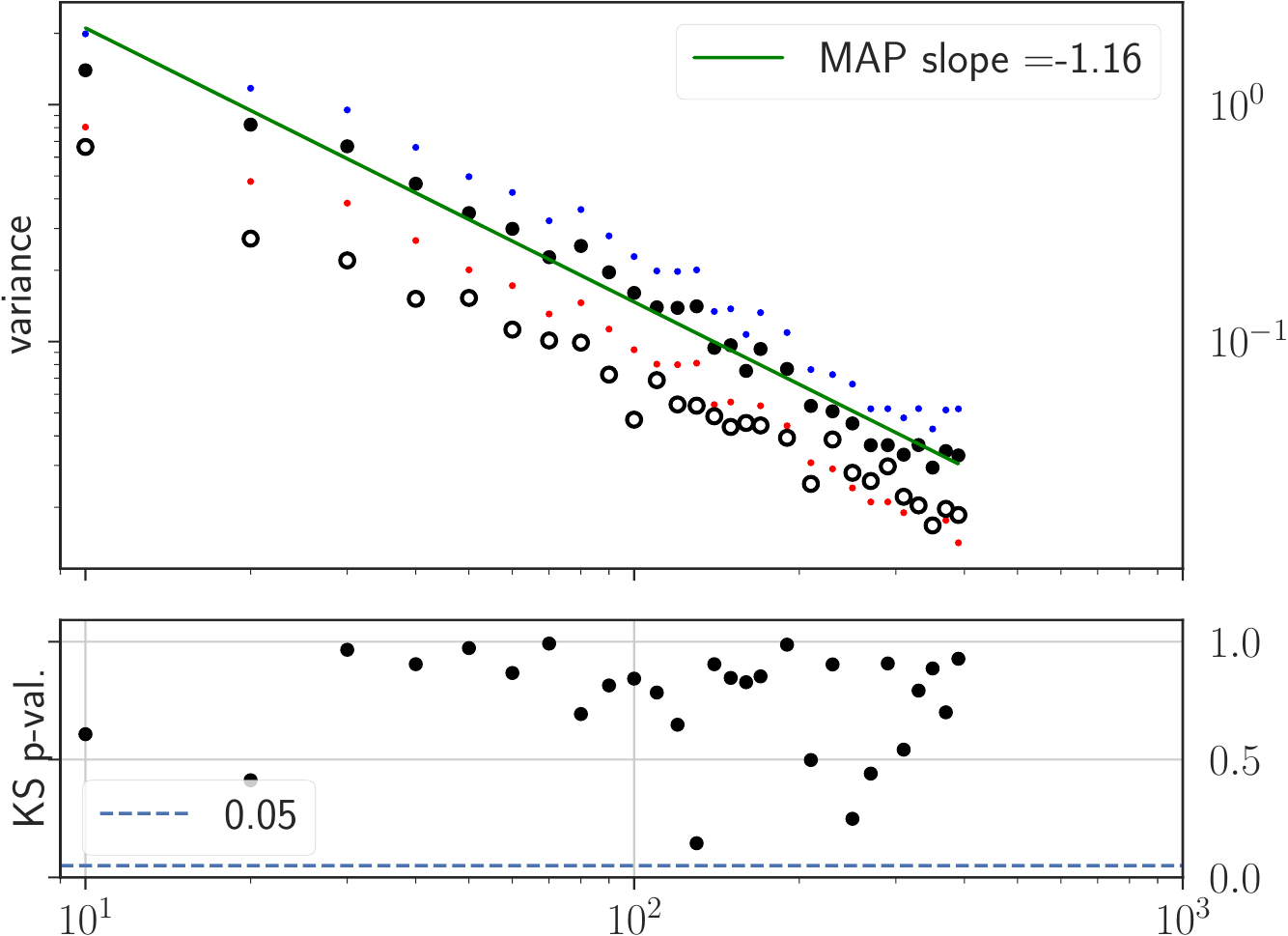}
\label{f:resultsAss3D}
}

\caption{Summary of the crude Monte Carlo results for a test function that violates the assumptions of Theorem~\ref{DPPMC Th1}.}
\label{f:resultsAss}
\end{figure}

\newpage
\bibliographystyle{plainnat}
\bibliography{submitted.bbl}

\end{document}